\documentclass[11pt,a4paper,reqno]{amsart}%
\usepackage{amsthm,amsmath,amsfonts,amssymb,xcolor,amsxtra,appendix,bookmark,bm,mathrsfs}
\usepackage[latin1]{inputenc}
\usepackage{graphicx}
\usepackage{booktabs,microtype,flafter,array,makecell}

\usepackage{placeins}
\graphicspath{{figures/}}
\usepackage{hyperref}
\hypersetup{hidelinks,pdftitle={An adaptive Galerkin neural-network method for minimum-norm control of wave equations},
  pdfauthor={Nguyen Gia Hien}}

\theoremstyle{plain}
\newtheorem{theorem}{Theorem}
\newtheorem{lemma}[theorem]{Lemma}
\newtheorem{corollary}[theorem]{Corollary}
\newtheorem{proposition}[theorem]{Proposition}

\theoremstyle{remark}
\newtheorem{remark}[theorem]{Remark}

\allowdisplaybreaks

\title[Galerkin neural-network method for control of wave equations]{An adaptive Galerkin neural-network method for minimum-norm control of wave equations}

\subjclass[2020]{Primary 93B05, 35L05; Secondary 65M60, 68T07, 93B07}

\keywords{exact controllability, wave equation, Hilbert Uniqueness Method, Galerkin neural networks, observability, error estimates}

\author[N. G. Hien]{Nguyen Gia Hien}
\address{Department of Mathematics, Texas A\&M University, College Station, TX 77843, USA}
\email{giahien-nguyen@tamu.edu} 

\begin{document}
\date{October 9, 2026}

\begin{abstract}
We develop an adaptive Galerkin neural-network method for computing minimum-$L^2$-norm interior controls for wave equations with spatially varying coefficients. Within the Hilbert Uniqueness Method, adjoint waves are approximated directly in space-time by least-squares Galerkin projections onto trial spaces enriched by neural-network basis functions selected through residual-based training. Linear Galerkin solves determine the adjoint approximations, and a separate linear system reconstructs the control. The least-squares objective provides an indicator for adaptive enrichment and stopping; the experiments use a Ritz-based terminal-data version whose continuous-error interpretation additionally requires an eigenpair consistency estimate. The main difficulty is that adjoint errors perturb this reconstruction system. Assuming observability of the continuous equation, we derive sufficient stability conditions involving a worst-case factor $\sqrt{n}$ multiplying the maximal adjoint error, where $n$ is the number of retained terminal modes. A fully discrete error estimate separates terminal-space truncation, adjoint approximation, and assembly errors, yielding convergence to the minimum-norm control when the truncation error, the $\sqrt{n}$-weighted adjoint error, and the assembly errors tend to zero as $n$ grows. For spectral terminal bases, additional regularity and uniform contraction assumptions yield quantitative convergence rates and show that $O(\log n)$ accepted enrichment steps per mode suffice to balance the neural and truncation errors. Numerical experiments on a nonconvex Cassini oval, a square containing a low-speed lens, and a three-dimensional ball with a graded coefficient illustrate the method.
\end{abstract}
	
\maketitle
\tableofcontents

\section{Introduction}

Controllability of partial differential equations asks whether a distributed system can be steered from a given initial state to a prescribed target by an input acting on only part of the domain or its boundary. For waves, the answer depends on how energy propagates through the medium and reaches the control region. We consider interior control of a wave equation with a spatially varying coefficient and seek the control of minimum $L^2$-norm among those that reach the target at time $T$.

The Hilbert Uniqueness Method (HUM) of Lions~\cite{lions1988exact} constructs this control from solutions of the homogeneous adjoint equation, with an observability inequality supplying the required coercivity. Observability of waves is governed by geometric optics: the geometric control condition requires every generalized ray to meet the observation region before time~$T$~\cite{rauch1974exponential,blr1992sharp,burq1997condition}. A variable coefficient can change the local wave speed, bend the rays, and thus affect this condition. Throughout the paper, observability of the continuous problem is taken as a hypothesis.

Numerical approximation adds a further difficulty. A scheme may approximate the wave evolution well and still fail to preserve observability: spurious high-frequency waves in semidiscrete schemes can be poorly observed, so convergent forward solvers do not necessarily produce convergent controls~\cite{infante1999boundary,zuazua2005propagation}. Remedies include filtering, regularization, and two-grid methods~\cite{glowinski1990numerical,ignat2009twogrid,ervedoza2013numerical}, as well as space-time variational formulations~\cite{cindea2013numerical,cindea2015mixed}.

Neural networks offer trial functions that can be defined directly in space-time without a mesh or a time-marching scheme for their representation. Residual-based approaches include physics-informed neural networks~\cite{raissi2019physics} and the Deep Galerkin Method~\cite{sirignano2018dgm}, while variational approaches include the Deep Ritz method~\cite{ew2018deep} and deep least-squares methods~\cite{cai2020deep}. Neural approaches have also been studied for PDE control~\cite{garciacervera2023control,mowlavi2023optimal}. The Galerkin neural networks of Ainsworth and Dong~\cite{AinsworthDong2021} combine training with a classical variational structure: trained networks successively enrich a trial space, and a linear Galerkin solve recomputes the coefficients after each enrichment. This construction is the starting point of the present work. Extended Galerkin neural networks provide a related least-squares variational framework for general boundary value problems with error control~\cite{AinsworthDong2025}. The present contribution concerns coupling neural Galerkin adjoint approximation with HUM control reconstruction and the propagation of adjoint errors to that reconstruction.

A HUM method built from learned adjoint waves raises a question of its own. The control is obtained from a linear system assembled from the adjoints on the control region, and errors in the adjoints perturb this system. The central issue is to determine how accurate the learned adjoints must be as more terminal modes are retained, and how this accuracy should be balanced against truncation and numerical assembly errors.

We address this question with an adaptive Galerkin neural-network method. Each adjoint wave, associated with one terminal basis function, is approximated by minimizing a least-squares functional for the wave equation and its terminal data. With consistent continuous terminal data and exact integration, this functional equals one half of the squared adjoint error in the analysis norm. The experiments evaluate a Ritz-based version by quadrature as a training and stopping indicator; the distinction from the continuous error, including the terminal operator-data consistency defect, is made explicit in Section~\ref{sec:numerics}. The adjoints can be computed independently and in parallel, and their restrictions to the control region define a finite linear system for the HUM coefficients. The analysis starts from the observability of the continuous equation rather than of a discrete scheme. With an energy-orthonormal terminal basis, the exact reconstruction is then stable uniformly in the number $n$ of retained modes, and learning and assembly errors enter as perturbations. Our main results are:
\begin{enumerate}
    \item a sufficient stability condition based on a worst-case estimate, satisfied when the maximal absolute adjoint error is at most a sufficiently small multiple of $n^{-1/2}$;
    \item a fully discrete error estimate separating terminal-space truncation, adjoint approximation, and assembly errors, which yields convergence to the minimum-norm control when these errors are reduced together;
    \item conditional convergence rates for spectral terminal bases, showing that $O(\log n)$ accepted enrichment steps per mode suffice to balance the neural and truncation errors under the stated assumptions.
\end{enumerate}

The logarithmic count concerns accepted enrichment directions per adjoint; it does not bound optimizer iterations, restarts, network width, or total computational cost.

We illustrate the method on a nonconvex Cassini oval, a square containing a low-speed lens, and a three-dimensional ball with a graded coefficient. Within the finite spectral reference models, the neural reconstruction errors at $n=32$ are two to four orders of magnitude smaller than the measured truncation errors, so the neural approximation is not the dominant source of control error in these comparisons.

The paper is organized as follows. Section~\ref{sec:wave} introduces the control problem and the least-squares formulation of the adjoint equations. Section~\ref{sec:method} presents the method and its stability, error, and convergence analysis. Section~\ref{sec:numerics} reports the numerical experiments, and Section~\ref{sec:conclusion} concludes the paper. Appendix~\ref{app:numerical-diagnostics} contains additional implementation details and diagnostics.

\textbf{Acknowledgments} The author would like to thank Professor Minh-Binh Tran for suggesting this problem and for many helpful discussions.

\section{Controlling  Wave Equations}\label{sec:wave}
	
Let $\Omega \subset \mathbb{R}^d$ be a bounded open domain with boundary of class $C^\infty$, and let $\omega \subset \Omega$ be a nonempty open subset. For a given final time $T>0$, we consider the following nonhomogeneous wave equation:
\begin{equation}\label{Wave1}
\begin{cases}
	u_{tt} - \nabla \cdot \big(a(x)\nabla u\big) + f\,\mathbf{1}_{\omega} = 0 & \text{in } (0,T)\times \Omega, \\[2mm]
	u = 0  & \text{on } (0,T)\times \partial\Omega, \\[1mm]
	u(0,\cdot) = u^0,\quad u_t(0,\cdot) = u^1 & \text{in } \Omega .
\end{cases}
\end{equation}
We assume that $a \in C^\infty(\overline\Omega)$ satisfies the uniform ellipticity condition \[0 < a_0 \le a(x) \le a_1 \qquad \text{for all } x \in \overline{\Omega},\] for some positive constants $a_0$ and $a_1$. Here $u = u(t,x)$ denotes the state variable and $f = f(t,x)$ is an interior control acting on the subdomain $\omega$, with $\mathbf{1}_{\omega}$ denoting the characteristic function of $\omega$. The subscript $t$ denotes differentiation with respect to time. The objective is to influence the dynamics of the system through the control input $f$ supported in $\omega$.
	
{\bf  Control Problem \cite{zuazua2006controllability}: } Given the initial condition $u^0,u^1$ and the final condition $u^0_T,u^1_T,$ we want to find $f$ such that 
\begin{equation}\label{Wave2}
	u(T,\cdot) = u^0_T,\quad u_t(T,\cdot) = u^1_T \text{ in } \Omega.
\end{equation}

We first give a characterization of the exact controls for \eqref{Wave2}, following the HUM duality framework \cite{lions1988exact}. Let $\langle \cdot , \cdot \rangle_{-1,1}$ denote the duality pairing between $H^{-1}(\Omega)$ and $H_0^1(\Omega)$. For any $(\varphi_T^0,\varphi_T^1)\in L^2(\Omega)\times H^{-1}(\Omega)$, we consider the following backward homogeneous wave equation:
\begin{equation}\label{Wave3}
\begin{cases}
	\varphi_{tt} - \nabla\cdot \big(a(x)\nabla \varphi\big) = 0 & \text{in } (0,T)\times \Omega, \\[2mm]
	\varphi = 0 & \text{on } (0,T)\times \partial\Omega, \\[1mm]
	\varphi(T,\cdot) = \varphi_T^0,\quad \varphi_t(T,\cdot) = \varphi_T^1 & \text{in } \Omega .
\end{cases}
\end{equation}
	
Let $(\varphi,\varphi_t)\in C\!\left([0,T]; L^2(\Omega)\times H^{-1}(\Omega)\right)$ denote the unique weak solution of \eqref{Wave3}.
\medskip
\noindent
\begin{lemma}\label{Lemma:Wave}
A control $f \in L^2\big((0,T)\times \omega\big)$ steers the solution of system \eqref{Wave1} from the initial state $(u^0,u^1)\in H_0^1(\Omega)\times L^2(\Omega)$ to the target state $(u_T^0,u_T^1)\in H_0^1(\Omega)\times L^2(\Omega)$ at time $T$ if and only if
\begin{equation}\label{Eq:257}
\begin{aligned}
	0 &= -\int_0^T \int_{\omega} \varphi\, f \,dx\,dt - \int_{\Omega} \varphi_T^0 u_T^1\,dx + \langle \varphi_T^1,u_T^0\rangle_{-1,1} + \int_{\Omega} \varphi(0)u^1\,dx - \langle \varphi_t(0),u^0\rangle_{-1,1},
\end{aligned}
\end{equation}
for all $(\varphi_T^0,\varphi_T^1)\in L^2(\Omega)\times H^{-1}(\Omega)$, where $\varphi$ denotes the solution of the adjoint system \eqref{Wave3}.
\end{lemma}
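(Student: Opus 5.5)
The plan is the standard HUM duality computation: multiply the state equation \eqref{Wave1} by the adjoint solution $\varphi$ of \eqref{Wave3}, integrate over $(0,T)\times\Omega$, and integrate by parts twice in time and in space. This yields an identity valid for \emph{every} control $f$, which relates the final state $(u(T),u_t(T))$ to the initial data and the control. The identity is derived first for smooth data and then extended by density.

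\textbf{Step 1 (smooth case).} Take $(\varphi_T^0,\varphi_T^1)\in C_c^\infty(\Omega)^2$, so that $\varphi$ is a strong solution, say $\varphi\in C([0,T];H^2\cap H_0^1)\cap C^1([0,T];H_0^1)$. Take also $(u^0,u^1)$ and $f$ regular enough that $u$ is a strong solution. The boundary terms from Green's formula vanish because $u=\varphi=0$ on $\partial\Omega$. Since $\varphi_{tt}-\nabla\cdot(a\nabla\varphi)=0$, we obtain
\[
0=\int_0^T\!\!\int_\Omega (u_{tt}-\nabla\cdot(a\nabla u)+f\mathbf 1_\omega)\varphi
=\Big[\int_\Omega (u_t\varphi-u\varphi_t)\,dx\Big]_0^T+\int_0^T\!\!\int_\omega f\varphi .
\]
The final-time bracket involves $u_t(T)$, $u(T)$, $\varphi_T^0$ and $\varphi_T^1$, and the initial-time bracket involves $u^1$, $u^0$, $\varphi(0)$ and $\varphi_t(0)$. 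Rearranging gives, for every $f$,
\begin{equation*}
\int_0^T\!\!\int_\omega \varphi f + \int_\Omega \varphi_T^0 u_t(T)\,dx - \langle\varphi_T^1,u(T)\rangle_{-1,1} - \int_\Omega \varphi(0)u^1\,dx + \langle\varphi_t(0),u^0\rangle_{-1,1}=0 .
\end{equation*}

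\textbf{Step 2 (density).} Extend this identity to $(u^0,u^1)\in H_0^1\times L^2$, $f\in L^2((0,T)\times\omega)$, and $(\varphi_T^0,\varphi_T^1)\in L^2\times H^{-1}$. This uses two continuity facts. First, the finite-energy well-posedness estimate for \eqref{Wave1}: $(u,u_t)\in C([0,T];H_0^1\times L^2)$, depending continuously on $(u^0,u^1,f)$. Second, the transposition/weak-solution estimate for \eqref{Wave3}: $(\varphi,\varphi_t)\in C([0,T];L^2\times H^{-1})$, depending continuously on the terminal data. Each term is then a continuous bilinear pairing, $L^2$ against $L^2$ or $H^{-1}$ against $H_0^1$, so it passes to the limit. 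This is the main technical point. One has to make sure the weak solution of \eqref{Wave3} is exactly the limit of strong solutions, which is how the weak solution is defined, via the conserved energy in $L^2\times H^{-1}$. One also has to make sure the pairings in the identity are compatible with the $H^{-1}$–$H_0^1$ duality that extends the $L^2$ inner product.

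\textbf{Step 3 (equivalence).} Subtracting \eqref{Eq:257} from the identity of Step 1 leaves
\[
\int_\Omega \varphi_T^0\,(u_t(T)-u_T^1)\,dx-\langle\varphi_T^1,u(T)-u_T^0\rangle_{-1,1}=0 .
\]
So \eqref{Eq:257} holds for all terminal data if and only if this last expression vanishes for all $(\varphi_T^0,\varphi_T^1)\in L^2\times H^{-1}$.

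For the direct implication, if $u(T)=u_T^0$ and $u_t(T)=u_T^1$, the expression is identically zero. For the converse, choose $\varphi_T^1=0$ and let $\varphi_T^0$ range over $L^2$; this gives $u_t(T)=u_T^1$. Then choose $\varphi_T^0=0$ and let $\varphi_T^1$ range over $H^{-1}$; this gives $u(T)=u_T^0$ in $H_0^1$, since $H^{-1}$ separates points of $H_0^1$. This concludes the proof.
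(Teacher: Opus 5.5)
Your proposal is correct and follows the paper's proof essentially step for step: the smooth-data integration by parts giving the identity \eqref{Wave4}, its density extension to \eqref{Wave5} for finite-energy data, and the two-sided equivalence obtained by testing first with $\varphi_T^1=0$ and then with $\varphi_T^0=0$. Your Step~2 spells out the continuity facts that the paper compresses into ``by a density argument.'' The only slip is cosmetic: with the signs as written in \eqref{Eq:257}, your Step~3 expression arises by \emph{adding} \eqref{Eq:257} to your Step~1 identity, not by subtracting it.
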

\medskip
\noindent
\begin{proof}
We first assume that \[ (u^0,u^1),\ (\varphi_T^0,\varphi_T^1)\in \mathcal{D}(\Omega)\times \mathcal{D}(\Omega), \qquad f\in \mathcal{D}\big((0,T)\times \omega\big),\] and let $u$ and $\varphi$ be the corresponding classical solutions of \eqref{Wave1} and \eqref{Wave3}, respectively. Here $\mathcal{D}(M)$ denotes the space of $C^\infty$ functions with compact support in $M$.
		
Multiplying the equation satisfied by $u$ by $\varphi$ and integrating by parts over $(0,T)\times \Omega$, we obtain
\begin{align*}
	-\int_0^T \int_{\omega} \varphi f \,dx\,dt &= \int_0^T \int_{\Omega} \varphi \bigl(u_{tt}-\nabla\cdot(a(x)\nabla u)\bigr)\,dx\,dt \\
	&= \int_{\Omega} \bigl(\varphi u_t - \varphi_t u\bigr)\,dx \Big|_{0}^{T} + \int_0^T \int_{\Omega} u \bigl(\varphi_{tt}-\nabla\cdot(a(x)\nabla \varphi)\bigr)\,dx\,dt \\ &= \int_{\Omega} \bigl[\varphi(T)u_t(T)-\varphi_t(T)u(T)\bigr]\,dx - \int_{\Omega} \bigl[\varphi(0)u_t(0)-\varphi_t(0)u(0)\bigr]\,dx .
\end{align*}
Using the terminal conditions for $\varphi$ and the initial conditions for $u$, this yields
\begin{equation}\label{Wave4}
	-\int_0^T \int_{\omega} \varphi f \,dx\,dt = \int_{\Omega} \bigl[\varphi_T^0 u_t(T)-\varphi_T^1 u(T)\bigr]\,dx - \int_{\Omega} \bigl[\varphi(0)u^1-\varphi_t(0)u^0\bigr]\,dx .
\end{equation}
		
By a density argument, passing to the limit in \eqref{Wave4} shows that for any $(u^0,u^1)\in H_0^1(\Omega)\times L^2(\Omega)$ and $(\varphi_T^0,\varphi_T^1)\in L^2(\Omega)\times H^{-1}(\Omega)$, the identity
\begin{equation}\label{Wave5}
	-\int_0^T \int_{\omega} \varphi f \,dx\,dt = -\langle \varphi_T^1,u(T)\rangle_{-1,1} + \int_{\Omega} \varphi_T^0 u_t(T)\,dx + \langle \varphi_t(0),u^0\rangle_{-1,1} - \int_{\Omega} \varphi(0)u^1\,dx
\end{equation}
holds. If $u(T)=u_T^0$ and $u_t(T)=u_T^1$, then \eqref{Wave5} is equivalent to \eqref{Eq:257}, proving necessity. Conversely, suppose that \eqref{Eq:257} holds for every terminal pair $(\varphi_T^0,\varphi_T^1)\in L^2(\Omega)\times H^{-1}(\Omega)$. Comparing \eqref{Eq:257} with \eqref{Wave5} gives \[\int_\Omega \varphi_T^0\bigl(u_t(T)-u_T^1\bigr)\,dx-\bigl\langle \varphi_T^1,u(T)-u_T^0\bigr\rangle_{-1,1}=0\] for every such terminal pair. Taking first $\varphi_T^1=0$ and then $\varphi_T^0=0$ yields $u_t(T)=u_T^1$ in $L^2(\Omega)$ and $u(T)=u_T^0$ in $H_0^1(\Omega)$. This proves sufficiency and completes the proof.
\end{proof}

Suppose that an exact control $f\in L^2((0,T)\times\omega)$ exists. This existence assumption is used in the minimum-norm characterization below; the continuous observability hypothesis \eqref{eq:two-sided-observation} subsequently imposed supplies exact controllability through the standard HUM argument~\cite{lions1988exact}. By Lemma~\ref{Lemma:Wave}, existence of an exact control is equivalent to solvability of the moment identity introduced below. Set \[Q=(0,T)\times\Omega,\qquad Q_\omega=(0,T)\times\omega,\] and define \[ \mathfrak a[f,\varphi]=(f,\varphi)_{L^2(Q_\omega)}\] and
\[\begin{aligned}
	L(\varphi)={}&-\int_\Omega \varphi(T)u_T^1\,dx+\langle \varphi_t(T),u_T^0\rangle_{-1,1}+\int_\Omega \varphi(0)u^1\,dx-\langle \varphi_t(0),u^0\rangle_{-1,1}.
\end{aligned}\]
Let \[\mathcal V=(H^2(\Omega)\cap H_0^1(\Omega))\times H_0^1(\Omega),\] and let $H$ be the space of solutions of \eqref{Wave3} whose terminal data belong to $\mathcal V$. Then \[\mathfrak a[f,\varphi]=L(\varphi)\qquad(\varphi\in H).\] Define the closed observation space \[ \mathscr H_\omega=\overline{\{\varphi|_{Q_\omega}:\varphi\in H\}}^{\,L^2(Q_\omega)}.\]

\begin{proposition}\label{prop:minimum-norm-exact-control}
The orthogonal projection \[f^*:=P_{\mathscr H_\omega}f\] is independent of the exact control $f$ used in its definition. Moreover, $f^*$ is the unique exact control of minimum $L^2(Q_\omega)$ norm and \[\mathfrak a[f^*,\varphi]=L(\varphi)\] for every weak adjoint solution of \eqref{Wave3} with terminal data in $L^2(\Omega)\times H^{-1}(\Omega)$.
\end{proposition}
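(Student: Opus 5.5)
The plan is to reduce everything to the linear structure of the moment identity in Lemma~\ref{Lemma:Wave}. First I will show that the set of exact controls is an affine subspace of $L^2(Q_\omega)$ whose direction space is exactly $\mathscr H_\omega^\perp$. By Lemma~\ref{Lemma:Wave}, $f$ is an exact control if and only if $\mathfrak a[f,\varphi]=L(\varphi)$ for all weak adjoint solutions with terminal data in $L^2(\Omega)\times H^{-1}(\Omega)$. So if $f_1,f_2$ are exact controls, then $g=f_1-f_2$ satisfies $(g,\varphi)_{L^2(Q_\omega)}=0$ for every such $\varphi$, in particular for every $\varphi\in H$. By density and continuity of the inner product, $g\perp\mathscr H_\omega$, and therefore $P_{\mathscr H_\omega}f_1=P_{\mathscr H_\omega}f_2$. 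This gives the independence claim.

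For the converse inclusion I must check that $f+g$ is still an exact control whenever $g\in\mathscr H_\omega^\perp$. This needs $(g,\varphi)_{L^2(Q_\omega)}=0$ for \emph{all} weak adjoint solutions, not only those in $H$. I will use a density and continuity argument. The data space $\mathcal V$ is dense in $L^2(\Omega)\times H^{-1}(\Omega)$. The map from terminal data to $\varphi|_{Q_\omega}$ is continuous from $L^2(\Omega)\times H^{-1}(\Omega)$ into $L^2(Q_\omega)$, since $\varphi\in C([0,T];L^2(\Omega))$ with bounds controlled by the energy estimate. Approximating terminal data by elements of $\mathcal V$ therefore shows that every weak solution's restriction lies in $\mathscr H_\omega$. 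This same step gives the final identity: $L$ is continuous with respect to the weak-solution topology, since $\varphi(0),\varphi(T)\in L^2$ and $\varphi_t(0),\varphi_t(T)\in H^{-1}$. Moreover, $f-f^*\in\mathscr H_\omega^\perp$ annihilates every weak $\varphi|_{Q_\omega}$. Hence $\mathfrak a[f^*,\varphi]=\mathfrak a[f,\varphi]=L(\varphi)$ for all weak adjoint solutions, and Lemma~\ref{Lemma:Wave} (sufficiency) then shows that $f^*$ is an exact control.

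Minimality and uniqueness follow from the Pythagorean identity. Any exact control $h$ has the form $h=f^*+g$ with $g=h-f^*\in\mathscr H_\omega^\perp$ by the first step, so
\[
\|h\|^2_{L^2(Q_\omega)}=\|f^*\|^2_{L^2(Q_\omega)}+\|g\|^2_{L^2(Q_\omega)},
\]
and equality holds only if $g=0$.

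The main obstacle is the density and continuity step, namely that restrictions of all weak solutions lie in the closure $\mathscr H_\omega$. I will handle it with the standard well-posedness estimate $\|\varphi\|_{C([0,T];L^2)}+\|\varphi_t\|_{C([0,T];H^{-1})}\le C\|(\varphi_T^0,\varphi_T^1)\|_{L^2\times H^{-1}}$ for \eqref{Wave3}, which follows from transposition or spectral expansion in the eigenfunctions of $-\nabla\cdot(a\nabla\cdot)$. Combined with the density of $\mathcal V$, this gives the required continuity.
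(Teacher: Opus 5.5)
Your proposal is correct and follows essentially the same route as the paper: differences of exact controls are orthogonal to $\mathscr H_\omega$, density of $\mathcal V$ together with the energy estimate extends the identity to all weak adjoint solutions, and the Pythagorean identity gives minimality and uniqueness. The only minor difference is how you get $\mathfrak a[f^*,\varphi]=L(\varphi)$ for weak $\varphi$: you place $\varphi|_{Q_\omega}$ in $\mathscr H_\omega$ and reuse the identity that Lemma~\ref{Lemma:Wave} already gives for $f$, so the continuity of $L$ you mention is not actually needed, whereas the paper extends the identity for $f^*$ from $H$ using continuity of both sides.
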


\begin{proof}
Let $f_1$ and $f_2$ be exact controls. Lemma~\ref{Lemma:Wave} gives \[(f_1-f_2,\varphi)_{L^2(Q_\omega)}=0,\qquad \varphi\in H.\] Hence $f_1-f_2\in\mathscr H_\omega^\perp$, and therefore $P_{\mathscr H_\omega}f_1=P_{\mathscr H_\omega}f_2$. This proves that $f^*$ is independent of the chosen exact control. It also gives \[(f^*,\varphi)_{L^2(Q_\omega)}=L(\varphi), \qquad \varphi\in H.\]

The strong terminal space $\mathcal V$ is dense in $L^2(\Omega)\times H^{-1}(\Omega)$. The energy estimate for the homogeneous wave equation shows that the map sending terminal data to $\varphi|_{Q_\omega}$ is continuous from this space into $L^2(Q_\omega)$. The same estimate, applied at times $0$ and $T$, shows that $\varphi\mapsto L(\varphi)$ is continuous with respect to the terminal-data norm. Approximating arbitrary terminal data by elements of $\mathcal V$ therefore extends the preceding moment identity to every weak adjoint solution. Lemma~\ref{Lemma:Wave} then implies that $f^*$ is an exact control.

Finally, if $g$ is any exact control, the first part of the proof gives $g-f^*\in\mathscr H_\omega^\perp$. Since $f^*\in\mathscr H_\omega$, the decomposition is orthogonal and \[\|g\|_{L^2(Q_\omega)}^2=\|f^*\|_{L^2(Q_\omega)}^2+\|g-f^*\|_{L^2(Q_\omega)}^2.\] Thus $f^*$ has minimum norm, and equality is possible only when $g=f^*$.
\end{proof}

Introduce the strong trial space \[X_0=C([0,T];H^2(\Omega)\cap H_0^1(\Omega)) \cap C^1([0,T];H_0^1(\Omega)) \cap C^2([0,T];L^2(\Omega)),\] with the homogeneous lateral boundary condition understood, and set \[\mathcal P v=v_{tt}-\nabla\cdot(a(x)\nabla v).\] For $\varphi,\psi\in X_0$, define
\[\begin{aligned}
	b_{LS}[\varphi,\psi]={}&(\mathcal P\varphi,\mathcal P\psi)_{L^2(Q)}+(\varphi(T),\psi(T))_{H^2(\Omega)}\\ &+(\varphi(T),\psi(T))_{H^1(\Omega)}+(\varphi_t(T),\psi_t(T))_{H^1(\Omega)},
\end{aligned}\]
and $\|\varphi\|_{LS}^2=b_{LS}[\varphi,\varphi]$. Uniqueness for the terminal-value Cauchy problem for the wave equation shows that this is a norm on $X_0$. We define $X$ to be the completion of $X_0$ in this norm. Thus $(X,b_{LS})$ is a Hilbert space by construction; no completeness of the stronger space $X_0$ in the least-squares norm is asserted.

For clarity, the maps \[v\longmapsto \mathcal Pv,\qquad v\longmapsto v(T),\qquad v\longmapsto v_t(T)\] from $X_0$ into $L^2(Q)$, $H^2(\Omega)\cap H_0^1(\Omega)$, and $H_0^1(\Omega)$, respectively, extend continuously to $X$. Throughout the paper, the same notation is used for these canonical extensions. The energy estimate below also gives a continuous realization of every element of $X$ as an $L^2(Q)$ function and supplies its initial energy traces. This realization is injective. Let $\xi\in X$ and $v_n\in X_0$ with $v_n\to\xi$ in $\|\cdot\|_{LS}$. Applying Lemma~\ref{lem:ls-embedding} to $v_n-v_p\in X_0$ shows that $(v_n)$ is Cauchy in $L^2(Q)$; suppose that its limit vanishes. Moreover, $\mathcal Pv_n\to g$ in $L^2(Q)$, $v_n(T)\to z^0$ in $H^2(\Omega)$, $\partial_tv_n(T)\to z^1$ in $H^1(\Omega)$, and $\|\xi\|_{LS}^2=\|g\|_{L^2(Q)}^2+\|z^0\|_{H^2(\Omega)}^2+\|z^0\|_{H^1(\Omega)}^2+\|z^1\|_{H^1(\Omega)}^2$. Let $\theta(t,x)=\chi(t)\eta(x)$, where $\chi\in C^\infty([0,T])$ vanishes near $t=0$ and $\eta\in C_c^\infty(\Omega)$. For every $v\in X_0$, integration by parts gives \[\int_Q(\mathcal Pv)\,\theta\,dx\,dt=\int_Q v\,(\mathcal P\theta)\,dx\,dt+\int_\Omega\bigl[v_t(T)\theta(T)-v(T)\theta_t(T)\bigr]dx.\] Applying this identity to $v_n$ and letting $n\to\infty$ yields \[\int_Q g\,\theta\,dx\,dt=\int_\Omega\bigl[z^1\theta(T)-z^0\theta_t(T)\bigr]dx.\] Choosing $\chi\in C_c^\infty(0,T)$ gives $g=0$, since finite sums of such products are dense in $L^2(Q)$. Choosing $\chi(T)=0$, $\chi'(T)=1$ and then $\chi(T)=1$, $\chi'(T)=0$ gives $z^0=0$ and $z^1=0$. Hence $\xi=0$. Thus all least-squares terms, terminal traces, and restrictions to $Q_\omega$ used below are well defined on $X$.

Let $A=-\nabla\cdot(a\nabla)$ be the positive Dirichlet realization in $L^2(\Omega)$, and equip $H^{-1}(\Omega)$ with the equivalent norm \[\|z\|_{H_A^{-1}(\Omega)}=\|A^{-1/2}z\|_{L^2(\Omega)}.\] Set \[\mathcal E=L^2(\Omega)\times H_A^{-1}(\Omega),\qquad \|(z^0,z^1)\|_{\mathcal E}^2=\|z^0\|_{L^2(\Omega)}^2+\|z^1\|_{H_A^{-1}(\Omega)}^2.\] Choose a fixed $\mathcal E$-orthonormal basis $\{(w_k^0,w_k^1)\}_{k\ge1}$ whose elements belong to $\mathcal V$. Such a basis exists because $\mathcal V$ is dense in $\mathcal E$. More explicitly, if $Ae_j=\lambda_je_j$ and $\{e_j\}_{j\ge1}$ is an orthonormal basis of $L^2(\Omega)$, one may take \[(w_{2j-1}^0,w_{2j-1}^1)=(e_j,0),\qquad (w_{2j}^0,w_{2j}^1)=(0,\sqrt{\lambda_j}\,e_j).\] These pairs belong to $\mathcal V$ under the smoothness assumptions imposed above.
	    
Let $\phi_k\in H$ solve
\begin{equation}\label{Waveback3}
\begin{cases}
    \mathcal P\phi_k=0 & \text{in }Q,\\
    \phi_k=0 & \text{on }(0,T)\times\partial\Omega,\\
    \phi_k(T)=w_k^0,\quad (\phi_k)_t(T)=w_k^1 & \text{in }\Omega.
\end{cases}
\end{equation}
The corresponding least-squares functional is 
\[\begin{aligned}
	\mathcal J_k(v)={}&\frac12\|\mathcal Pv\|_{L^2(Q)}^2+\frac12\|v(T)-w_k^0\|_{H^2(\Omega)}^2\\
	&+\frac12\|v(T)-w_k^0\|_{H^1(\Omega)}^2+\frac12\|v_t(T)-w_k^1\|_{H^1(\Omega)}^2.
\end{aligned}\]
Since $\phi_k$ solves \eqref{Waveback3},
\begin{equation}\label{eq:objective-error-identity}
    2\mathcal J_k(v)=\|v-\phi_k\|_{LS}^2.
\end{equation}
First, we show a connection between the LS-norm and the $L^2$-norm on $(0,T)\times\Omega$.
\begin{lemma}\label{lem:ls-embedding}
    There exists a constant $C>0$, depending only on $\Omega$, $T$, $a_0$ and $a_1$, such that, for every $\psi\in X$, \[\|\psi\|_{L^2(Q)}\leq C\|\psi\|_{LS}.\] Consequently, \[\|\psi\|_{L^2(Q_\omega)}\leq C\|\psi\|_{LS}.\]
\end{lemma}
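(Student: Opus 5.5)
Since the canonical extensions to $X$ are defined by density and the $L^2(Q)$ realization is built from limits of $X_0$-sequences, it suffices to prove the inequality for $\psi\in X_0$ with a constant independent of $\psi$, and then pass to the limit. The second inequality follows at once from $\|\psi\|_{L^2(Q_\omega)}\le\|\psi\|_{L^2(Q)}$.

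For $\psi\in X_0$, I will split $\psi=\psi_h+\psi_p$. Here $\psi_h$ solves the homogeneous backward problem \eqref{Wave3} with terminal data $(\psi(T),\psi_t(T))$, and $\psi_p$ solves the inhomogeneous backward problem
\[
\mathcal P\psi_p=g:=\mathcal P\psi,\qquad \psi_p=0\ \text{on }(0,T)\times\partial\Omega,\qquad \psi_p(T)=0,\ \partial_t\psi_p(T)=0.
\]
Uniqueness for the Cauchy problem justifies this decomposition. The main tool is the energy identity for $A=-\nabla\cdot(a\nabla)$ with the $a$-weighted energy
\[
E(t)=\tfrac12\bigl(\|v_t(t)\|_{L^2}^2+(a\nabla v(t),\nabla v(t))_{L^2}\bigr).
\]
Since $a_0\le a\le a_1$, this energy is equivalent to the $H_0^1\times L^2$ norm with constants depending only on $a_0,a_1$ and the Poincar\'e constant of $\Omega$.

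\emph{Homogeneous part.} Energy conservation gives $E_h(t)=E_h(T)$. Bounding the energy by its two terms,
\[
E_h(T)\le \tfrac12\|\psi_t(T)\|_{L^2}^2+\tfrac{a_1}{2}\|\nabla\psi(T)\|_{L^2}^2\le C\bigl(\|\psi_t(T)\|_{H^1}^2+\|\psi(T)\|_{H^1}^2\bigr)\le C\|\psi\|_{LS}^2.
\]
Poincar\'e then gives $\|\psi_h(t)\|_{L^2}^2\le C\,E_h(t)$. Integrating over $(0,T)$ yields $\|\psi_h\|_{L^2(Q)}\le C\sqrt T\,\|\psi\|_{LS}$.

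\emph{Particular part.} Differentiating the energy gives $E_p'(t)=(g,\partial_t\psi_p)$. With $E_p(T)=0$, integrating from $t$ to $T$ gives
\[
E_p(t)\le\int_t^T\|g\|_{L^2}\,\sqrt{2E_p(s)}\,ds.
\]
A standard Gronwall/Bihari argument, or the bound $\sup E_p\le\sqrt{2\sup E_p}\,\|g\|_{L^1(0,T;L^2)}$, gives $\sup_t E_p(t)^{1/2}\le C\sqrt T\,\|g\|_{L^2(Q)}$. Poincar\'e and integration in time then give $\|\psi_p\|_{L^2(Q)}\le C T\,\|\mathcal P\psi\|_{L^2(Q)}\le CT\,\|\psi\|_{LS}$.

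Adding the two bounds gives the claim on $X_0$, with $C$ depending on $\Omega$ through the Poincar\'e constant and on $T$, $a_0$, $a_1$. For $\psi\in X$, choose $v_n\in X_0$ with $v_n\to\psi$ in $\|\cdot\|_{LS}$. Applying the estimate to $v_n-v_p$ shows that $(v_n)$ is Cauchy in $L^2(Q)$, and its limit is the realization of $\psi$. Passing to the limit in $\|v_n\|_{L^2(Q)}\le C\|v_n\|_{LS}$ concludes the proof.

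The main obstacle is making sure the energy identities are rigorous for $\psi\in X_0$. In particular, $\psi_p$ must have enough regularity ($g\in L^2(Q)$, zero data) to justify differentiating $E_p$. This holds by standard semigroup theory: for $g\in L^1(0,T;L^2)$ the solution lies in $C([0,T];H_0^1)\cap C^1([0,T];L^2)$, and the energy identity holds by density of smooth $g$. Alternatively, one can avoid $\psi_p$ entirely by applying Duhamel's formula to $\psi$ directly.
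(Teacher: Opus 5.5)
Your proof is correct. Its core is the same as the paper's: the $a$-weighted energy identity, ellipticity and Poincar\'e's inequality, integration in time, and extension to $X$ by density. The difference is your superposition split $\psi=\psi_h+\psi_p$.

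The paper never solves an auxiliary problem. For $\psi\in X_0$ the identity $E'(t)=(\mathcal P\psi(t),\psi_t(t))_{L^2(\Omega)}$ holds directly. Differentiating $\sqrt{E+\delta}$ then gives the single pointwise bound $\sqrt{E(t)}\le\sqrt{E(T)}+\tfrac{1}{\sqrt2}\int_t^T\|\mathcal P\psi(s)\|_{L^2(\Omega)}\,ds$. This bound already contains both of your pieces: $E(T)$ equals your conserved $E_h$, and the integral is your forcing contribution. It also needs neither well-posedness nor uniqueness of the backward problems.

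The pointwise-in-$t$ form is what the paper later uses for the explicit constant $C_X$ in \eqref{eq:explicit-ls-embedding-constant}. Your sup-argument gives $\sup_t\sqrt{E_p(t)}\le\sqrt2\,\|\mathcal P\psi\|_{L^1(0,T;L^2(\Omega))}$, which is a factor $2$ weaker and loses the $t$-dependence. That is harmless for the lemma but not for that constant; the Bihari route you mention recovers the sharp bound. In exchange, your split makes the separate roles of the terminal data and the forcing transparent.

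The regularity issue you flag as the main obstacle does not actually arise. Since $(\psi(T),\psi_t(T))\in\mathcal V$ and $D(A)=H^2(\Omega)\cap H_0^1(\Omega)$ by elliptic regularity, $\psi_h$ is a strong solution. Hence $\psi_p:=\psi-\psi_h$ already lies in $X_0$ and its energy identity is immediate. No semigroup theory for $L^1(0,T;L^2)$ forcing is needed.
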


\begin{proof}
It is enough first to prove the estimate for $\psi\in X_0$; the result on $X$ will then follow by completion. Set \[g:=\mathcal P\psi=\psi_{tt}-\nabla\cdot(a(x)\nabla\psi).\] Thus $\psi$ satisfies \[\psi_{tt}-\nabla\cdot(a(x)\nabla\psi)=g\qquad \text{in }Q.\]

Define the wave energy \[E(t):=\frac12\|\psi_t(t)\|_{L^2(\Omega)}^2+\frac12\int_\Omega a(x)\nabla\psi(t,x)\cdot\nabla\psi(t,x)\,dx.\] By the uniform ellipticity and boundedness of $a$, we have \[c_0\left(\|\psi_t(t)\|_{L^2(\Omega)}^2+\|\nabla\psi(t)\|_{L^2(\Omega)}^2\right)\leq E(t)\leq C_0\left(\|\psi_t(t)\|_{L^2(\Omega)}^2+\|\nabla\psi(t)\|_{L^2(\Omega)}^2\right),\] for some constants $c_0,C_0>0$.
    
We now compute the derivative of $E(t)$. Since $\psi=0$ on $(0,T)\times\partial\Omega$, we also have $\psi_t=0$ on $(0,T)\times\partial\Omega$. Hence, integrating by parts in space,
\[\begin{aligned}
    E'(t)&=\int_\Omega \psi_t\psi_{tt}\,dx+\int_\Omega a(x)\nabla\psi\cdot\nabla\psi_t\,dx\\
    &=\int_\Omega \psi_t\psi_{tt}\,dx-\int_\Omega \nabla\cdot(a(x)\nabla\psi)\,\psi_t\,dx\\
    &=\int_\Omega\left(\psi_{tt}-\nabla\cdot(a(x)\nabla\psi)\right)\psi_t\,dx\\
    &=\int_\Omega g(t,x)\psi_t(t,x)\,dx.
\end{aligned}\]
Therefore, \[|E'(t)|\leq\|g(t)\|_{L^2(\Omega)}\|\psi_t(t)\|_{L^2(\Omega)}\leq \sqrt{2E(t)}\,\|g(t)\|_{L^2(\Omega)}.\] To avoid division by zero, fix $\delta>0$. Then \[\left|\frac{d}{dt}\sqrt{E(t)+\delta}\right|=\frac{|E'(t)|}{2\sqrt{E(t)+\delta}}\leq\frac{1}{\sqrt2}\|g(t)\|_{L^2(\Omega)}.\] Integrating from $t$ to $T$ and then letting $\delta\downarrow0$ gives \[\sqrt{E(t)}\leq\sqrt{E(T)}+\frac{1}{\sqrt2}\int_t^T\|g(s)\|_{L^2(\Omega)}\,ds.\] By the Cauchy--Schwarz inequality, \[\int_t^T \|g(s)\|_{L^2(\Omega)}\,ds\leq\sqrt{T}\|g\|_{L^2(Q)}.\] Hence \[\sup_{t\in[0,T]}\sqrt{E(t)}\leq C\left(\sqrt{E(T)}+\|g\|_{L^2(Q)}\right).\]

We now estimate the final energy. By the definition of $E(T)$ and the boundedness of $a$, we have \[E(T)\leq C\left(\|\psi_t(T)\|_{L^2(\Omega)}^2+\|\nabla\psi(T)\|_{L^2(\Omega)}^2\right).\] Since $\psi(T)\in H_0^1(\Omega)$, the norm $\|\nabla\psi(T)\|_{L^2(\Omega)}$ is equivalent to $\|\psi(T)\|_{H_0^1(\Omega)}$. Thus \[\sqrt{E(T)}\leq C\left(\|\psi_t(T)\|_{L^2(\Omega)}+\|\psi(T)\|_{H_0^1(\Omega)}\right).\] Moreover, \[\|\psi_t(T)\|_{L^2(\Omega)}\leq\|\psi_t(T)\|_{H^1(\Omega)}\] and \[\|\psi(T)\|_{H_0^1(\Omega)}\leq\|\psi(T)\|_{H^1(\Omega)}\leq\|\psi(T)\|_{H^2(\Omega)}.\]
    
Therefore, \[\sqrt{E(T)}\leq C\left(\|\psi_t(T)\|_{H^1(\Omega)}+\|\psi(T)\|_{H^2(\Omega)}\right)\leq C\|\psi\|_{LS}.\] Since $g=\mathcal P\psi$, we also have \[\|g\|_{L^2(Q)}=\|\mathcal P\psi\|_{L^2(Q)}\leq\|\psi\|_{LS}.\] Consequently, \[\sup_{t\in[0,T]}\sqrt{E(t)}\leq C\|\psi\|_{LS}.\]

Using again the coercivity of the energy, we get \[\sup_{t\in[0,T]}\|\nabla\psi(t)\|_{L^2(\Omega)}\leq C\|\psi\|_{LS}.\] Since $\psi(t)\in H_0^1(\Omega)$, Poincar\'e's inequality gives \[\|\psi(t)\|_{L^2(\Omega)}\leq C\|\nabla\psi(t)\|_{L^2(\Omega)}.\] Hence \[\sup_{t\in[0,T]}\|\psi(t)\|_{L^2(\Omega)}\leq C\|\psi\|_{LS}.\] Finally,
\[\begin{aligned}
    \|\psi\|_{L^2(Q)}&=\left(\int_0^T \|\psi(t)\|_{L^2(\Omega)}^2\,dt\right)^{1/2}\\
    &\leq\sqrt{T}\sup_{t\in[0,T]}\|\psi(t)\|_{L^2(\Omega)}\\
    &\leq C\|\psi\|_{LS}.
\end{aligned}\]
This proves \[\|\psi\|_{L^2((0,T)\times\Omega)}\leq C\|\psi\|_{LS}.\] Since $\omega\subset\Omega$, we also have \[\|\psi\|_{L^2((0,T)\times\omega)}\leq\|\psi\|_{L^2((0,T)\times\Omega)} \leq C\|\psi\|_{LS}.\] Both estimates extend uniquely from the dense subspace $X_0$ to its least-squares completion $X$. The proof is complete.
\end{proof}

\begin{lemma}\label{lem:L-continuity}
    The linear functional \(L\) is continuous with respect to the least-squares norm. More precisely, there exists a constant \(C>0\) such that \[|L(v)|\le C\|v\|_{LS},\qquad \forall v\in X.\]
\end{lemma}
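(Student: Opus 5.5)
Fix $v\in X_0$; the bound will then pass to $X$ by density, since both sides are continuous in $\|\cdot\|_{LS}$ once established on $X_0$ and the traces are the canonical extensions. The functional $L(v)$ has four terms: two at $t=T$ and two at $t=0$. Each will be bounded by the energy quantity $\sup_{t\in[0,T]}\sqrt{E(t)}$ from the proof of Lemma~\ref{lem:ls-embedding}, which was shown there to satisfy $\sup_t\sqrt{E(t)}\le C\|v\|_{LS}$ for every element of $X_0$, with no requirement that $\mathcal Pv=0$.

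For the terminal terms, Cauchy--Schwarz gives $|\int_\Omega v(T)u_T^1\,dx|\le \|v(T)\|_{L^2(\Omega)}\|u_T^1\|_{L^2(\Omega)}$, and $\|v(T)\|_{L^2}\le\|v(T)\|_{H^2}\le\|v\|_{LS}$. For the duality term I use $|\langle v_t(T),u_T^0\rangle_{-1,1}|\le \|v_t(T)\|_{H^{-1}(\Omega)}\|u_T^0\|_{H_0^1(\Omega)}$. Then $\|v_t(T)\|_{H^{-1}}\le C\|v_t(T)\|_{L^2}\le C\|v_t(T)\|_{H^1}\le C\|v\|_{LS}$; in fact $v_t(T)\in H_0^1$, so the pairing is just an $L^2$ integral.

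For the initial terms, $|\int_\Omega v(0)u^1\,dx|\le\|v(0)\|_{L^2}\|u^1\|_{L^2}$. Poincar\'e's inequality and the coercivity of $E$ give $\|v(0)\|_{L^2}\le C\|\nabla v(0)\|_{L^2}\le C\sqrt{E(0)}$. Similarly, $|\langle v_t(0),u^0\rangle_{-1,1}|\le C\|v_t(0)\|_{L^2}\|u^0\|_{H_0^1}\le C\sqrt{E(0)}\,\|u^0\|_{H_0^1}$. The energy bound $\sqrt{E(0)}\le C(\sqrt{E(T)}+\|\mathcal Pv\|_{L^2(Q)})\le C\|v\|_{LS}$, already derived in Lemma~\ref{lem:ls-embedding}, controls both. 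Summing the four estimates gives $|L(v)|\le C\|v\|_{LS}$ with
\[C=C\bigl(\|u^0\|_{H_0^1}+\|u^1\|_{L^2}+\|u_T^0\|_{H_0^1}+\|u_T^1\|_{L^2}\bigr).\]

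The only delicate point is the extension to $X$, because $L$ involves the traces $v(0)$ and $v_t(0)$, which are not among the defining components of the LS norm. I would handle this as follows. For a Cauchy sequence $v_n\in X_0$, the same energy estimate applied to $v_n-v_p$ shows that $(v_n(0),\partial_tv_n(0))$ is Cauchy in $H_0^1\times L^2$. This defines the initial energy traces on $X$ continuously, as the paper asserts. Consequently $L$ extends uniquely to a bounded functional on $X$ with the same constant, and on $X_0$ this extension agrees with the original formula.
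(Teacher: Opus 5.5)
Your proof is correct and follows essentially the same route as the paper: you bound the terminal terms directly by the least-squares norm, control $\sqrt{E(0)}$ by $\sqrt{E(T)}+\|\mathcal Pv\|_{L^2(Q)}$ through the energy estimate, apply coercivity and Poincar\'e's inequality to the initial traces, and extend from $X_0$ to $X$ by continuity. The differences are cosmetic: you bound the duality pairings through $H^{-1}$--$H_0^1$ duality, so $\|u^0\|_{H_0^1}$ and $\|u_T^0\|_{H_0^1}$ appear, whereas the paper rewrites them as $L^2$ integrals; you also spell out the Cauchy-sequence construction of the initial traces on $X$, which the paper only asserts.
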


\begin{proof}
We first establish the estimate for $v\in X_0$. Set $g=\mathcal Pv$ and \[E(t)=\frac12\|v_t(t)\|_{L^2(\Omega)}^2+\frac12\int_\Omega a(x)|\nabla v(t,x)|^2\,dx.\] Applying the energy estimate, we obtain \[\sqrt{E(0)}\le\sqrt{E(T)}+C\int_0^T \|g(s)\|_{L^2(\Omega)}\,ds .\] By the Cauchy--Schwarz inequality, \[\int_0^T \|g(s)\|_{L^2(\Omega)}\,ds\le\sqrt{T}\|g\|_{L^2((0,T)\times\Omega)} .\] Therefore, \[\sqrt{E(0)}\le\sqrt{E(T)}+C\|\mathcal P v\|_{L^2((0,T)\times\Omega)} .\] Moreover, by the definition of \(E(T)\), the boundedness of \(a\), and the definition of the least-squares norm, 
\[\begin{aligned}
    \sqrt{E(T)}&\le C\left(\|v_t(T)\|_{L^2(\Omega)}+\|\nabla v(T)\|_{L^2(\Omega)}\right) \\
    &\le C\left(\|v_t(T)\|_{H^1(\Omega)}+\|v(T)\|_{H^1(\Omega)}\right)  \\
    &\le C\|v\|_{LS}.
\end{aligned}\]
It follows that \[\sqrt{E(0)}\le C\|v\|_{LS}.\] Using again the coercivity of the energy, we have \[\|v_t(0)\|_{L^2(\Omega)}+\|\nabla v(0)\|_{L^2(\Omega)}\le C\|v\|_{LS}.\] Since \(v(0)\in H_0^1(\Omega)\), Poincar\'e's inequality gives \[\|v(0)\|_{L^2(\Omega)}\le C\|\nabla v(0)\|_{L^2(\Omega)}\le C\|v\|_{LS}.\] Consequently, \[\|v(0)\|_{L^2(\Omega)}+\|v_t(0)\|_{L^2(\Omega)}\le C\|v\|_{LS}.\]

On the other hand, the terminal traces are directly controlled by the least-squares norm: \[\|v(T)\|_{L^2(\Omega)}\le\|v(T)\|_{H^2(\Omega)}\le C\|v\|_{LS},\] and \[\|v_t(T)\|_{L^2(\Omega)}\le\|v_t(T)\|_{H^1(\Omega)}\le C\|v\|_{LS}.\] Hence, \[\|v(T)\|_{L^2(\Omega)}+\|v_t(T)\|_{L^2(\Omega)}+\|v(0)\|_{L^2(\Omega)}+\|v_t(0)\|_{L^2(\Omega)}\le C\|v\|_{LS}.\] Now, by the definition of \(L\), we have \[L(v)=\int_\Omega\left[-v(T)u_T^1+v_t(T)u_T^0\right]dx+\int_\Omega\left[v(0)u^1-v_t(0)u^0\right]dx .\]

Using the Cauchy--Schwarz inequality, we obtain
\[\begin{aligned}
    |L(v)|&\le\|v(T)\|_{L^2(\Omega)}\|u_T^1\|_{L^2(\Omega)}+\|v_t(T)\|_{L^2(\Omega)}\|u_T^0\|_{L^2(\Omega)}\\
    &\quad+\|v(0)\|_{L^2(\Omega)}\|u^1\|_{L^2(\Omega)}+\|v_t(0)\|_{L^2(\Omega)}\|u^0\|_{L^2(\Omega)} .
\end{aligned}\]
Combining this with the previous trace estimate yields \[|L(v)|\le C\left(\|u_T^1\|_{L^2(\Omega)}+\|u_T^0\|_{L^2(\Omega)}+\|u^1\|_{L^2(\Omega)}+\|u^0\|_{L^2(\Omega)}\right)\|v\|_{LS}.\] After absorbing the fixed data norms into the constant, we obtain $|L(v)|\le C\|v\|_{LS}$ on $X_0$. Hence $L$ extends uniquely to a continuous linear functional on the completion $X$, with the same estimate. The proof is complete.
\end{proof}
    
In order to use the Galerkin neural-network construction of Ainsworth and Dong \cite{AinsworthDong2021}, we need the following lemma.

\begin{lemma}\label{Density}
    Assume that \(\Omega\subset\mathbb R^d\) is a bounded domain with \(C^\infty\) boundary and that \(a\in W^{1,\infty}(\Omega)\). Let \(U\) be an open neighborhood of \(\overline\Omega\), and assume that \(\rho\in C^\infty(U)\) is a boundary-defining function for \(\Omega\), namely \[\Omega=\{x\in U:\rho(x)>0\},\qquad \rho=0\ \text{on }\partial\Omega,\qquad \nabla\rho\neq 0\ \text{on }\partial\Omega.\] Assume that \(\sigma\in C^2(\mathbb R)\) has the \(C^2\)-universal approximation property: for every compact set \(K\subset\mathbb R^{d+1}\) and every function \(q\) that is \(C^2\) on a neighborhood of \(K\), finite sums of the form \[z\longmapsto\sum_{j=1}^n c_j\sigma(\xi_j\cdot z+b_j)\] are dense in \(C^2(K)\). This assumption is satisfied, for example, by the smooth bounded nonconstant activations \(\sigma(s)=\tanh s\) and \(\sigma(s)=\sin s\) \cite[Theorem~3]{hornik1991approximation}. For \(n\in\mathbb N\), define \[\mathcal N_n^\rho=\left\{\rho(x)\sum_{j=1}^n c_j\sigma(\alpha_jt+W_j\cdot x+b_j):c_j,\alpha_j,b_j\in\mathbb R,\ W_j\in\mathbb R^d\right\}.\]
    Set \[\mathcal N^\rho=\bigcup_{n=1}^\infty \mathcal N_n^\rho.\] Then \[\overline{\mathcal N^\rho}^{\|\cdot\|_{LS}}=X.\]
\end{lemma}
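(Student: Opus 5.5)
Since $X$ is by definition the completion of $X_0$ in $\|\cdot\|_{LS}$, and $\mathcal N^\rho\subset X_0$ (each element is $C^\infty$ on $\overline Q$ and vanishes on $(0,T)\times\partial\Omega$ because $\rho=0$ there), it suffices to show that every $v\in X_0$ can be approximated in $\|\cdot\|_{LS}$ by elements of $\mathcal N^\rho$. The first step is to bound the LS norm by a strong norm: for $w\in X_0$,
\[\|w\|_{LS}\le C\Bigl(\|w\|_{H^2(Q)}+\|w(T)\|_{H^2(\Omega)}+\|w_t(T)\|_{H^1(\Omega)}\Bigr)\le C\|w\|_{C^2(\overline Q)},\]
using $a\in W^{1,\infty}$ so that $\|\nabla\cdot(a\nabla w)\|_{L^2}\le C\|w\|_{H^2}$. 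Thus density of $\mathcal N^\rho$ in $X_0$ with respect to the $C^2(\overline Q)$ norm (or an $H^2$-type norm controlling the terminal traces) would suffice.

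The second step reduces to smooth functions. Given $v\in X_0$, I first approximate it in $\|\cdot\|_{LS}$ by a function $\tilde v\in C^\infty(\overline Q)$ vanishing on the lateral boundary. One way is to approximate the terminal data $(v(T),v_t(T))$ and the source $\mathcal Pv$ by smooth compatible data (e.g.\ finite eigenfunction expansions of $A$, which are $C^\infty(\overline\Omega)$ by elliptic regularity since $a,\partial\Omega$ are smooth in the standing assumptions, with source expanded likewise and smooth in $t$). The solution with such data is a finite eigen-sum, hence smooth, and by the well-posedness of the LS-norm data map it converges to $v$ in $\|\cdot\|_{LS}$. The third step writes $\tilde v=\rho\,q$ with $q$ smooth. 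Since $\tilde v$ vanishes on $\partial\Omega$ and $\nabla\rho\neq0$ there, the division lemma (Hadamard/Taylor in the normal variable near $\partial\Omega$, trivial away from it) gives $q=\tilde v/\rho\in C^\infty$ up to the boundary, extendable to a $C^2$ function on a neighborhood of $K=[0,T]\times\overline\Omega$.

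Finally, I apply the $C^2$-universal approximation property to $q$ on $K$: choose $q_n=\sum c_j\sigma(\alpha_jt+W_j\cdot x+b_j)$ with $\|q-q_n\|_{C^2(K)}\to0$. By Leibniz, $\|\rho(q-q_n)\|_{C^2(K)}\le C_\rho\|q-q_n\|_{C^2(K)}\to0$, and $\rho q_n\in\mathcal N^\rho$, so the first step yields $\|\tilde v-\rho q_n\|_{LS}\to0$. A triangle inequality then finishes the argument. I expect the main obstacle to be the smoothing step for a general $v\in X_0$, which only has $H^2$ regularity in space. The division by $\rho$ requires more than $H^2$ (in general $H^2\cap H^1_0$ divided by $\rho$ is not $C^2$). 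This makes the intermediate spectral approximation in the LS norm essential, including verifying that truncated eigen-expansions of the source converge in $L^2(Q)$ while the terminal data converge in $H^2\times H^1$, which uses the equivalence of $\|A\cdot\|_{L^2}$ with the $H^2$ norm on $H^2\cap H^1_0$.
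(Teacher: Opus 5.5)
Your architecture matches the paper's: reduce to $X_0$, bound $\|\cdot\|_{LS}$ by the $C^2(\overline Q)$ norm, smooth, divide by $\rho$, apply the $C^2$-universal approximation property, and finish with a triangle inequality. Your smoothing step is a clean alternative when $a$ is smooth. With eigenfunctions of $A$, the spectral projection commutes with $\mathcal P$. Indeed, with the unmollified projected source, your ``solution with projected data'' is exactly $\Pi_N^A v(t)$, because $(v(t),e_j)$ solves the same modal ODE. Hence $\|v-\Pi_N^Av\|_{LS}$ is just the norm of the truncated data tails, and no uniform-in-time argument is needed.

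\textbf{The gap.} This step needs $e_j\in C^\infty(\overline\Omega)$, which you take from the standing assumption $a\in C^\infty(\overline\Omega)$. The lemma, however, is stated under the weaker hypothesis $a\in W^{1,\infty}(\Omega)$. Under that hypothesis the eigenfunctions of $A$ lie in $H^2\cap H_0^1$ but are in general not $C^2$. Already in one dimension, $-(ae')'=\lambda e$ gives $e''=-(\lambda e+a'e')/a$, which jumps wherever $a'$ jumps and $e'\neq0$. Then $\tilde v$ and $\tilde v/\rho$ are not $C^2$. So you can apply neither the $C^2$-universal approximation property nor your bound $\|w\|_{LS}\le C\|w\|_{C^2(\overline Q)}$. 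The argument fails at exactly the regularity obstruction you flagged for general $H^2\cap H_0^1$ functions.

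\textbf{How the paper avoids it.} The paper projects onto eigenfunctions of the Dirichlet Laplacian, which are $C^\infty(\overline\Omega)$ whatever $a$ is. The price is that the projection $\Pi_m$ no longer commutes with $\mathcal P$. So the paper proves
\begin{itemize}
\item $\Pi_m u\to u$ in $C([0,T];H^2\cap H_0^1)\cap C^1([0,T];H_0^1)\cap C^2([0,T];L^2)$, using uniform boundedness of $\Pi_m$ on each space and compactness of the images of $t\mapsto u(t),u_t(t),u_{tt}(t)$;
\item $\|\mathcal P(u_m-u)\|_{L^2(Q)}\to0$, using $\|\nabla\cdot(a\nabla w)\|_{L^2}\le C\|w\|_{H^2}$ for $a\in W^{1,\infty}$;
\item smoothness in time, by approximating the finitely many coefficients in $C^2([0,T])$.
\end{itemize}
Replacing your $A$-eigenbasis by the Laplacian eigenbasis, with this argument, closes the gap. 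If you are willing to assume $a\in C^\infty(\overline\Omega)$, your proof is correct as it stands.

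A minor point: with $\sigma\in C^2$ only, elements of $\mathcal N^\rho$ are $C^2$, not $C^\infty$, on $\overline Q$. This still suffices for $\mathcal N^\rho\subset X_0$.
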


\begin{proof}
We first prove the approximation statement for \(u\in X_0\). Let \(\varepsilon>0\).

First, we use the density of smooth functions satisfying the homogeneous Dirichlet boundary condition. More precisely, define \[\mathcal D_\partial=\{w\in C^\infty([0,T]\times\overline\Omega):w(t,x)=0\ \text{for }x\in\partial\Omega\}.\] We claim that \(\mathcal D_\partial\) is dense in \(X_0\) with respect to \(\|\cdot\|_{LS}\). Indeed, let \(\{e_k\}_{k=1}^\infty\) be the eigenfunctions of the Dirichlet Laplacian on \(\Omega\), namely \[-\Delta e_k=\lambda_ke_k\ \text{in }\Omega,\qquad e_k=0\ \text{on }\partial\Omega.\] Because \(\Omega\) has \(C^\infty\) boundary, the Dirichlet Laplacian satisfies \[D(-\Delta_D)=H^2(\Omega)\cap H_0^1(\Omega),\] with equivalent graph and \(H^2\)-norms, and \(e_k\in C^\infty(\overline\Omega)\). The finite-dimensional projections \[\Pi_m v=\sum_{k=1}^m (v,e_k)_{L^2(\Omega)}e_k\] satisfy \[\Pi_m v\to v\quad\text{in }H^2(\Omega)\cap H^1_0(\Omega),\quad H^1_0(\Omega),\quad L^2(\Omega),\] whenever \(v\) belongs to the corresponding space. Hence, for \[u_m(t)=\Pi_m u(t),\] the convergence is uniform in \(t\): the images of the continuous maps \(t\mapsto u(t)\), \(t\mapsto u_t(t)\), and \(t\mapsto u_{tt}(t)\) are compact in their respective spaces, while the spectral projections are uniformly bounded in the corresponding norms. Therefore, we have \[u_m\to u\quad\text{in }C([0,T];H^2(\Omega)\cap H^1_0(\Omega)),\] \[(u_m)_t\to u_t\quad\text{in }C([0,T];H^1_0(\Omega)),\] and \[(u_m)_{tt}\to u_{tt}\quad\text{in }C([0,T];L^2(\Omega)).\] Since \(a\in W^{1,\infty}(\Omega)\), for every \(v\in H^2(\Omega)\cap H^1_0(\Omega)\) one has \[\|\nabla\cdot(a\nabla v)\|_{L^2(\Omega)}\le C\|v\|_{H^2(\Omega)}.\] Therefore \[\|\mathcal P(u_m-u)\|_{L^2(Q)}\le \|(u_m-u)_{tt}\|_{L^2(Q)}+C\|u_m-u\|_{L^2(0,T;H^2(\Omega))}\to 0.\] Moreover, \[\|u_m(T)-u(T)\|_{H^2(\Omega)}+\|u_m(T)-u(T)\|_{H^1(\Omega)}+\|(u_m)_t(T)-u_t(T)\|_{H^1(\Omega)}\to 0.\] Consequently, \[\|u_m-u\|_{LS}\to 0.\] For each fixed \(m\), approximate the finitely many coefficients \(t\mapsto (u(t),e_k)_{L^2(\Omega)}\) in \(C^2([0,T])\) by functions in \(C^\infty([0,T])\). Because the sum is finite and \(e_k\in C^\infty(\overline\Omega)\), the resulting functions belong to \(\mathcal D_\partial\) and converge to \(u_m\) in \(\|\cdot\|_{LS}\). Hence \(\mathcal D_\partial\) is dense in \(X_0\) with respect to \(\|\cdot\|_{LS}\). Therefore, we can choose \(w\in\mathcal D_\partial\) such that \[\|u-w\|_{LS}<\frac{\varepsilon}{2}.\] Since \(\rho\) is a smooth boundary-defining function and \(w=0\) on \((0,T)\times\partial\Omega\), the boundary version of Hadamard's division lemma, applied in boundary charts and combined with a partition of unity, shows that the quotient \[q(t,x)=\frac{w(t,x)}{\rho(x)}\] {is well-defined in \(\Omega\) and extends smoothly to an open neighborhood of \([0,T]\times\overline\Omega\). Thus} \[w(t,x)=\rho(x)q(t,x),\qquad q\in C^\infty([0,T]\times\overline\Omega).\]

Let \(K=[0,T]\times\overline\Omega\). {By the assumed \(C^2\)-universal approximation property of \(\sigma\),} there exist \(n\in\mathbb N\) and parameters \(c_j,\alpha_j,b_j\in\mathbb R\), \(W_j\in\mathbb R^d\), such that \[S_n(t,x)=\sum_{j=1}^n c_j\sigma(\alpha_jt+W_j\cdot x+b_j)\] satisfies \[\|q-S_n\|_{C^2(K)}<\delta,\] where \(\delta>0\) will be chosen later. Define \[w_n^\rho(t,x)=\rho(x)S_n(t,x).\] Then \(w_n^\rho\in\mathcal N_n^\rho\). Moreover, \[w-w_n^\rho=\rho(q-S_n).\] {Since \(\rho\in C^\infty(U)\), \(a\in W^{1,\infty}(\Omega)\), and \(K\) is compact, the product rule controls every term in the least-squares norm, including \(\mathcal P(\rho(q-S_n))\) and the terminal traces. Hence there exists a constant \(C_{LS}>0\), depending only on the fixed data and not on \(n\), such that} \[\|\rho(q-S_n)\|_{LS}\le C_{LS}\|q-S_n\|_{C^2(K)}.\] Hence \[\|w-w_n^\rho\|_{LS}\le C_{LS}\delta.\] Choosing \(\delta>0\) sufficiently small so that \(C_{LS}\delta<\varepsilon/2\), we obtain \[\|w-w_n^\rho\|_{LS}<\frac{\varepsilon}{2}.\] Therefore, \[\|u-w_n^\rho\|_{LS}\le \|u-w\|_{LS}+\|w-w_n^\rho\|_{LS}<\varepsilon.\] Since \(w_n^\rho\in\mathcal N_n^\rho\subset\mathcal N^\rho\), it follows that \[\overline{\mathcal N^\rho}^{\|\cdot\|_{LS}}=X,\] because $X$ is the least-squares completion of $X_0$. The proof is complete.
\end{proof}

\begin{remark}
Lemma~\ref{Density} provides the approximation property needed in the adaptive Galerkin neural-network construction below. At each enrichment step, the neural class is used to approximate the normalized Galerkin error through a residual-maximization problem. The resulting finite-dimensional Galerkin solve is linear and coercive, whereas the feature-generation step involves nonlinear optimization over the network parameters. The density result is an existence statement only: it does not imply that a numerical optimizer finds a global maximizer over the nonlinear class $\mathcal N^\rho$.
\end{remark}

\begin{remark}
The smooth-boundary hypothesis of Lemma~\ref{Density} does not apply directly to the square in Example~2. For $\Omega=(0,\pi)^2$ and $\rho(x,y)=\sin x\sin y$, however, a smooth function that vanishes on all four sides can be divided successively by the two boundary coordinates near each corner; equivalently, its quotient by $\sin x\sin y$ extends smoothly to the closed square. Moreover, the Dirichlet sine basis provides the corresponding density of smooth boundary-vanishing functions in the least-squares norm. Hence Lemma~\ref{Density} has a square-specific analogue for this choice of \(\rho\). This observation concerns neural density and regularity only. The construction used in Example~2 does not prove the continuous observability inequality, which remains a separate hypothesis.
\end{remark}

\section{{Stable adaptive neural Galerkin--HUM approximation for the wave equation}}\label{sec:method}

Throughout this section, $C>0$ denotes a generic constant whose value may change from one occurrence to the next. Unless stated otherwise, such constants depend only on the fixed problem data and are independent of the discretization parameters $n$, $m$, and $h$. Constants carrying a subscript, such as $C_X$, $C_L$, or $C_{\mathrm{rec}}$, retain the meaning assigned to them. We write $A\lesssim B$ when $A\leq CB$ for a generic constant of this type.
        
\subsection{Adaptive neural Galerkin--HUM approximation of the adjoint solutions}

We apply the adaptive Galerkin neural-network construction to each adjoint variational problem separately. For every $k\geq1$, define the continuous linear functional
\begin{equation*}
    L_k(v):=(w_k^0,v(T))_{H^2(\Omega)}+(w_k^0,v(T))_{H^1(\Omega)}+(w_k^1,v_t(T))_{H^1(\Omega)},\qquad v\in X.
\end{equation*}
Since $\mathcal P\phi_k=0$, $\phi_k(T)=w_k^0$, and $(\phi_k)_t(T)=w_k^1$, the exact adjoint solution $\phi_k\in X$ is equivalently characterized by
\begin{equation}\label{eq:adjoint-ls-variational-problem}
    b_{LS}[\phi_k,v]=L_k(v),\qquad v\in X.
\end{equation}
The bilinear form $b_{LS}$ is symmetric and positive definite on $X$, and the induced energy norm is precisely the least-squares norm \[\|v\|_{LS}:=b_{LS}[v,v]^{1/2}.\]
For every $k\geq1$, we construct a sequence of finite-dimensional spaces
\begin{equation*}
    Y_{k,0}\subset Y_{k,1}\subset Y_{k,2}\subset\cdots\subset X
\end{equation*}
adapted to the adjoint solution $\phi_k$. We initialize the procedure by \[Y_{k,0}:=\{0\},\qquad \phi_k^0:=0.\]
Suppose that $Y_{k,m-1}$ and the corresponding Galerkin approximation $\phi_k^{m-1}\in Y_{k,m-1}$ have already been constructed. Define the residual functional
\begin{equation*}
    \langle r_k(\phi_k^{m-1}),v\rangle:=L_k(v)-b_{LS}[\phi_k^{m-1},v],\qquad v\in X.
\end{equation*}
Using \eqref{eq:adjoint-ls-variational-problem}, we obtain
\begin{equation}\label{eq:residual-error-representation}
    \langle r_k(\phi_k^{m-1}),v\rangle=b_{LS}[\phi_k-\phi_k^{m-1},v].
\end{equation}

Let \[B_{LS}:=\{v\in X:\|v\|_{LS}\leq1\}\] denote the closed unit ball of $(X,b_{LS})$. The following elementary observation provides the basis of the adaptive enrichment procedure.

\begin{lemma}
Assume that $\phi_k^{m-1}\neq\phi_k$. Then
\begin{equation*}
    \max_{v\in B_{LS}}\langle r_k(\phi_k^{m-1}),v\rangle=\|\phi_k-\phi_k^{m-1}\|_{LS}.
\end{equation*}
Moreover, the maximizer is the normalized current error
\begin{equation}\label{eq:normalized-current-error}
    \widehat\psi_{k,m}:=\frac{\phi_k-\phi_k^{m-1}}{\|\phi_k-\phi_k^{m-1}\|_{LS}}.
\end{equation}
\end{lemma}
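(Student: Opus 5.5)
The plan is to combine the residual–error representation \eqref{eq:residual-error-representation} with the Cauchy--Schwarz inequality in the Hilbert space $(X,b_{LS})$. Set $e:=\phi_k-\phi_k^{m-1}\in X$. By hypothesis $e\neq0$, and since $\|\cdot\|_{LS}$ is a norm on $X$ we have $\|e\|_{LS}>0$. This is what makes $\widehat\psi_{k,m}$ in \eqref{eq:normalized-current-error} well defined.

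For the upper bound, take any $v\in B_{LS}$. By \eqref{eq:residual-error-representation} and Cauchy--Schwarz for the symmetric positive definite form $b_{LS}$,
\[
\langle r_k(\phi_k^{m-1}),v\rangle=b_{LS}[e,v]\le\|e\|_{LS}\|v\|_{LS}\le\|e\|_{LS}.
\]
Hence the supremum over $B_{LS}$ is at most $\|e\|_{LS}$.

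For attainment, note that $\widehat\psi_{k,m}=e/\|e\|_{LS}$ has unit norm, so it lies in $B_{LS}$. Moreover,
\[
\langle r_k(\phi_k^{m-1}),\widehat\psi_{k,m}\rangle=b_{LS}[e,e]/\|e\|_{LS}=\|e\|_{LS}.
\]
So the supremum is attained and is a maximum with the stated value.

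For uniqueness of the maximizer, suppose $v\in B_{LS}$ attains the value $\|e\|_{LS}$. Then both inequalities in the chain above are equalities. The second gives $\|v\|_{LS}=1$. The first is equality in Cauchy--Schwarz, which in a real inner product space forces $v=\lambda e$ with $\lambda\ge0$. Combining, $\lambda=1/\|e\|_{LS}$, i.e. $v=\widehat\psi_{k,m}$.

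Alternatively, one can expand
\[
\|v-\widehat\psi_{k,m}\|_{LS}^2=\|v\|_{LS}^2-2\,b_{LS}[v,\widehat\psi_{k,m}]+1\le 2-2=0.
\]
This version avoids citing the equality case of Cauchy--Schwarz, so I would use it.

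None of these steps is a real obstacle. The only points needing care are that $b_{LS}$ is genuinely an inner product on the completion $X$, which holds by construction, and that $e\in X$. The latter holds because $\phi_k\in H\subset X_0\subset X$ (its terminal data lie in $\mathcal V$) and $\phi_k^{m-1}\in Y_{k,m-1}\subset X$.
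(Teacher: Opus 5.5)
Your proof is correct and follows the paper's argument: the residual--error representation combined with Cauchy--Schwarz in $(X,b_{LS})$ gives the upper bound, and testing with the normalized error $\widehat\psi_{k,m}$ gives attainment. Your extra step proving that the maximizer is unique, via the expansion of $\|v-\widehat\psi_{k,m}\|_{LS}^2$, supplies a point that the paper's proof leaves implicit when it refers to ``the maximizer''.
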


\begin{proof}
Set \[e_k^{m-1}:=\phi_k-\phi_k^{m-1}.\] By \eqref{eq:residual-error-representation} and the Cauchy--Schwarz inequality, \[\langle r_k(\phi_k^{m-1}),v\rangle=b_{LS}[e_k^{m-1},v]\leq\|e_k^{m-1}\|_{LS}\|v\|_{LS}\leq\|e_k^{m-1}\|_{LS}\] for every $v\in B_{LS}$. On the other hand, taking \[v=\frac{e_k^{m-1}}{\|e_k^{m-1}\|_{LS}}\] gives equality. This proves both statements.
\end{proof}

The exact maximizer \eqref{eq:normalized-current-error} cannot be evaluated because it depends on the unknown solution $\phi_k$. We therefore approximate it by a shallow neural network. At the $(k,m)$-th enrichment step, let $p_{k,m}\geq1$ be a prescribed width and write
\begin{equation}\label{eq:hidden-neural-features}
    \zeta_j^{\lambda}(t,x):=\rho(x)\sigma(\alpha_jt+W_j\cdot x+b_j),\qquad 1\leq j\leq p_{k,m},
\end{equation}
where \[\lambda=\{\alpha_j,W_j,b_j\}_{j=1}^{p_{k,m}}\in\mathbb R^{p_{k,m}(d+2)}\] collects the hidden parameters. The factor $\rho$ imposes the homogeneous Dirichlet boundary condition exactly. The width is allowed to depend on both the terminal mode $k$ and the enrichment index $m$, which is useful because higher-frequency modes may require a richer local feature space.

For every nonzero $v\in X$, define the normalized residual indicator
\begin{equation*}
    \mathcal{I}_{k,m}(v):=\frac{\langle r_k(\phi_k^{m-1}),v\rangle}{\|v\|_{LS}}=\frac{L_k(v)-b_{LS}[\phi_k^{m-1},v]}{\|v\|_{LS}}.
\end{equation*}
Suppose that the partially linear training procedure described below returns a nonzero function $v_{k,m}^{NN}$. We change its sign, if necessary, and set
\begin{equation*}
    \psi_{k,m}^{NN}:=\frac{v_{k,m}^{NN}}{\|v_{k,m}^{NN}\|_{LS}},\qquad\eta_{k,m} :=\langle r_k(\phi_k^{m-1}),\psi_{k,m}^{NN}\rangle\geq0.
\end{equation*}
Thus
\begin{equation*}
    \eta_{k,m}=L_k(\psi_{k,m}^{NN})-b_{LS}[\phi_k^{m-1},\psi_{k,m}^{NN}].
\end{equation*}

The quantity $\eta_{k,m}$ measures the component of the current residual captured by the trained direction. A small value of $\eta_{k,m}$ alone is not a certified stopping criterion because it may also result from unsuccessful training. At the analytical level, we stop mode $k$ when
\begin{equation}\label{eq:adjoint-stopping-test}
    \sqrt{2\mathcal J_k(\phi_k^{m-1})}\leq\mathrm{tol}_k.
\end{equation}
By \eqref{eq:objective-error-identity}, this is precisely the condition $\|\phi_k-\phi_k^{m-1}\|_{LS}\leq\mathrm{tol}_k$. If the stopping test is not satisfied and the direction passes the acceptance tests stated below, we define
\begin{equation*}
    Y_{k,m}:=Y_{k,m-1}+\operatorname{span}\{\psi_{k,m}^{NN}\}
\end{equation*}
and recompute the global Galerkin projection
\begin{equation}\label{eq:adjoint-projection}
    \phi_k^m\in Y_{k,m}:\qquad b_{LS}[\phi_k^m,v]=L_k(v),\qquad v\in Y_{k,m}.
\end{equation}

\begin{proposition}
For every accepted enrichment step, problem \eqref{eq:adjoint-projection} has a unique solution. Moreover, 
\begin{equation}\label{eq:adjoint-galerkin-orthogonality}
    b_{LS}[\phi_k-\phi_k^m,v]=0, \qquad v\in Y_{k,m},
\end{equation}
and
\begin{equation}\label{eq:adjoint-best-approximation}
    \|\phi_k-\phi_k^m\|_{LS}=\inf_{v\in Y_{k,m}}\|\phi_k-v\|_{LS}.
\end{equation}
Consequently,
\begin{equation}\label{eq:monotone-adjoint-error}
    \|\phi_k-\phi_k^m\|_{LS}\leq\|\phi_k-\phi_k^{m-1}\|_{LS}.
\end{equation}
If $\eta_{k,m}>0$, then $\psi_{k,m}^{NN}\notin Y_{k,m-1}$ and hence \[Y_{k,m}=Y_{k,m-1}\mathbin{\dotplus}\operatorname{span}\{\psi_{k,m}^{NN}\}.\] Here $\dotplus$ denotes an algebraic direct sum.
\end{proposition}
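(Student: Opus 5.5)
The plan is to treat \eqref{eq:adjoint-projection} as a finite-dimensional Ritz--Galerkin problem for the symmetric, coercive form $b_{LS}$ on the Hilbert space $(X,b_{LS})$. The argument is the standard Céa argument, specialized to the case where the energy norm is the least-squares norm itself.

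\emph{Existence, uniqueness and orthogonality.} Since $Y_{k,m}$ is a finite-dimensional subspace of $X$, pick any basis $\{v_1,\dots,v_N\}$ of $Y_{k,m}$. Problem \eqref{eq:adjoint-projection} is then equivalent to a linear system with Gram matrix $G_{ij}=b_{LS}[v_j,v_i]$. For $c\in\mathbb R^N$ we have $c^\top Gc=\|\sum_j c_jv_j\|_{LS}^2$. This vanishes only when $\sum_jc_jv_j=0$, and linear independence then forces $c=0$, so $G$ is symmetric positive definite and the solution exists and is unique. Equivalently, this is Riesz representation of $L_k|_{Y_{k,m}}$ in $(Y_{k,m},b_{LS})$, using the continuity of $L_k$ noted before \eqref{eq:adjoint-ls-variational-problem}. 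Subtracting \eqref{eq:adjoint-projection} from \eqref{eq:adjoint-ls-variational-problem} tested with $v\in Y_{k,m}\subset X$ gives \eqref{eq:adjoint-galerkin-orthogonality}.

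\emph{Best approximation and monotonicity.} For $v\in Y_{k,m}$, write $\phi_k-v=(\phi_k-\phi_k^m)+(\phi_k^m-v)$, where the second term lies in $Y_{k,m}$. By \eqref{eq:adjoint-galerkin-orthogonality} the two terms are $b_{LS}$-orthogonal, so Pythagoras gives $\|\phi_k-v\|_{LS}^2=\|\phi_k-\phi_k^m\|_{LS}^2+\|\phi_k^m-v\|_{LS}^2$. Hence the infimum in \eqref{eq:adjoint-best-approximation} is attained exactly at $v=\phi_k^m$. Since $Y_{k,m-1}\subset Y_{k,m}$ and $\phi_k^{m-1}\in Y_{k,m-1}$, \eqref{eq:adjoint-best-approximation} bounds $\|\phi_k-\phi_k^m\|_{LS}$ by $\|\phi_k-\phi_k^{m-1}\|_{LS}$, which is \eqref{eq:monotone-adjoint-error}. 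For $m=1$ the initialization $Y_{k,0}=\{0\}$, $\phi_k^0=0$ is consistent, because $\phi_k^0=0$ trivially satisfies the Galerkin conditions on $\{0\}$.

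\emph{Direct sum.} The only point needing care is the direct-sum claim, and it is short. Suppose $\eta_{k,m}>0$ but $\psi_{k,m}^{NN}\in Y_{k,m-1}$. Then the Galerkin orthogonality \eqref{eq:adjoint-galerkin-orthogonality} at level $m-1$, combined with \eqref{eq:residual-error-representation}, gives
\[
\eta_{k,m}=\langle r_k(\phi_k^{m-1}),\psi_{k,m}^{NN}\rangle=b_{LS}[\phi_k-\phi_k^{m-1},\psi_{k,m}^{NN}]=0,
\]
which is a contradiction. Therefore $\psi_{k,m}^{NN}\notin Y_{k,m-1}$. Since $\psi_{k,m}^{NN}\neq0$, we get $Y_{k,m-1}\cap\operatorname{span}\{\psi_{k,m}^{NN}\}=\{0\}$, so the sum is direct and the dimension increases by one. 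Orthogonality at level $m-1$ comes from the inductive hypothesis, and for $m=1$ it is trivial because $Y_{k,0}=\{0\}$.
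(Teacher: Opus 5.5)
Your proof is correct and follows the paper's argument essentially step by step. It uses positive-definiteness of $b_{LS}$ on $Y_{k,m}$ for well-posedness, subtraction for Galerkin orthogonality, the orthogonal-projection (Pythagoras) characterization for best approximation and monotonicity, and Galerkin orthogonality at step $m-1$ for the direct-sum claim; your explicit treatment of the base case $Y_{k,0}=\{0\}$ is a harmless detail the paper leaves implicit.
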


\begin{proof}
The restriction of $b_{LS}$ to the finite-dimensional space $Y_{k,m}$ is an inner product; hence \eqref{eq:adjoint-projection} has a unique solution. Subtracting it from \eqref{eq:adjoint-ls-variational-problem} gives \eqref{eq:adjoint-galerkin-orthogonality}. The latter identifies $\phi_k^m$ as the $b_{LS}$-orthogonal projection of $\phi_k$ onto $Y_{k,m}$, which proves \eqref{eq:adjoint-best-approximation} and \eqref{eq:monotone-adjoint-error}. Finally, Galerkin orthogonality at step $m-1$ gives \[\langle r_k(\phi_k^{m-1}),v\rangle=0,\qquad v\in Y_{k,m-1}.\] Thus $\eta_{k,m}>0$ implies $\psi_{k,m}^{NN}\notin Y_{k,m-1}$.
\end{proof}

The relevant approximation-quality quantity is the fraction of the current error detected by the accepted direction:
\begin{equation}\label{eq:residual-correlation}
    \varrho_{k,m}:=\frac{\eta_{k,m}}{\sqrt{2\mathcal J_k(\phi_k^{m-1})}}=b_{LS}[\widehat\psi_{k,m},\psi_{k,m}^{NN}]\in[0,1].
\end{equation}
Unlike the unknown function $\widehat\psi_{k,m}$, the first expression in \eqref{eq:residual-correlation} is computable from the variational functional when the integrals are evaluated exactly.

\begin{proposition}\label{prop:neural-residual-error-reduction}
Assume that $\phi_k^{m-1}\neq\phi_k$ and that the normalized neural direction is chosen as above. Then
\begin{equation}\label{eq:adjoint-error-contraction}
    \|\phi_k-\phi_k^m\|_{LS}\leq\sqrt{1-\varrho_{k,m}^2}\,\|\phi_k-\phi_k^{m-1}\|_{LS}.
\end{equation}
Moreover,
\begin{equation}\label{eq:correlation-distance-identity}
    \|\widehat\psi_{k,m}-\psi_{k,m}^{NN}\|_{LS}^2=2-2\varrho_{k,m}.
\end{equation}
In particular, the condition $\|\widehat\psi_{k,m}-\psi_{k,m}^{NN}\|_{LS}<1$ is equivalent to $\varrho_{k,m}>1/2$.
\end{proposition}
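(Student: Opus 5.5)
The plan is to exploit the fact that $\phi_k^m$ is the $b_{LS}$-orthogonal projection of $\phi_k$ onto $Y_{k,m}$, by \eqref{eq:adjoint-best-approximation}. The contraction then follows by comparing with a single explicit competitor in $Y_{k,m}$. Write $e:=\phi_k-\phi_k^{m-1}$, so that $\widehat\psi_{k,m}=e/\|e\|_{LS}$. We use \eqref{eq:residual-error-representation} together with \eqref{eq:objective-error-identity}. Since $\psi_{k,m}^{NN}$ has unit norm, these give
\[
\eta_{k,m}=b_{LS}[e,\psi_{k,m}^{NN}],
\qquad
\varrho_{k,m}=\frac{\eta_{k,m}}{\|e\|_{LS}}=b_{LS}[\widehat\psi_{k,m},\psi_{k,m}^{NN}].
\]
This is the identity asserted in \eqref{eq:residual-correlation}. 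The Cauchy--Schwarz inequality, together with the sign normalization $\eta_{k,m}\ge0$, then places $\varrho_{k,m}$ in $[0,1]$.

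For \eqref{eq:adjoint-error-contraction}, take the competitor $v:=\phi_k^{m-1}+\eta_{k,m}\psi_{k,m}^{NN}$, which lies in $Y_{k,m}$. Then $\phi_k-v=e-\eta_{k,m}\psi_{k,m}^{NN}$. Expanding the square with $\|\psi_{k,m}^{NN}\|_{LS}=1$ gives
\[
\|e-\eta_{k,m}\psi_{k,m}^{NN}\|_{LS}^2=\|e\|_{LS}^2-2\eta_{k,m}^2+\eta_{k,m}^2=(1-\varrho_{k,m}^2)\|e\|_{LS}^2.
\]
The best-approximation property \eqref{eq:adjoint-best-approximation} bounds $\|\phi_k-\phi_k^m\|_{LS}$ by this quantity, and taking square roots yields the claim. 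Equivalently, $\eta_{k,m}\psi_{k,m}^{NN}$ is the projection of $e$ onto the span of the new direction. Adding this direction to $Y_{k,m-1}$ cannot do worse, because the Galerkin solution on the larger space is optimal.

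For \eqref{eq:correlation-distance-identity}, expand $\|\widehat\psi_{k,m}-\psi_{k,m}^{NN}\|_{LS}^2$ using that both vectors have unit norm. This gives $1+1-2b_{LS}[\widehat\psi_{k,m},\psi_{k,m}^{NN}]=2-2\varrho_{k,m}$. The final equivalence is immediate, since $2-2\varrho_{k,m}<1$ holds iff $\varrho_{k,m}>1/2$.

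There is no real obstacle here. The only point needing care is to confirm that \eqref{eq:adjoint-best-approximation} applies over the whole enriched space $Y_{k,m}$, which contains the competitor, rather than only over $\operatorname{span}\{\psi_{k,m}^{NN}\}$. One should also note that $\phi_k^{m-1}\ne\phi_k$ is needed to define $\widehat\psi_{k,m}$ and to divide by $\|e\|_{LS}$.
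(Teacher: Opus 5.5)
Your proposal is correct and matches the paper's proof: you use the same competitor $\phi_k^{m-1}+\eta_{k,m}\psi_{k,m}^{NN}\in Y_{k,m}$ together with the best-approximation property, and the same expansion $\|e\|_{LS}^2-\eta_{k,m}^2=(1-\varrho_{k,m}^2)\|e\|_{LS}^2$. Your unit-norm expansion for the distance identity, including the explicit check that $\varrho_{k,m}\in[0,1]$, is also the paper's argument.
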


\begin{proof}
Let $e_k^{m-1}:=\phi_k-\phi_k^{m-1}$. Since $\phi_k^{m-1}+\eta_{k,m}\psi_{k,m}^{NN}\in Y_{k,m}$, the best-approximation property and \eqref{eq:residual-error-representation} give
\begin{align*}
    \|\phi_k-\phi_k^m\|_{LS}^2&\leq \|e_k^{m-1}-\eta_{k,m}\psi_{k,m}^{NN}\|_{LS}^2\\
    &=\|e_k^{m-1}\|_{LS}^2-\eta_{k,m}^2\\
    &=(1-\varrho_{k,m}^2)\|e_k^{m-1}\|_{LS}^2.
\end{align*}
Taking square roots proves \eqref{eq:adjoint-error-contraction}. Since both $\widehat\psi_{k,m}$ and $\psi_{k,m}^{NN}$ have unit least-squares norm, \eqref{eq:correlation-distance-identity} follows directly from \eqref{eq:residual-correlation}.
\end{proof}

Following the partially linear Galerkin strategy of Ainsworth and Dong \cite{AinsworthDong2021}, we now describe the implementation used to generate $\psi_{k,m}^{NN}$. For fixed hidden parameters $\lambda$, introduce the local feature space
\begin{equation*}
    V_{k,m}(\lambda) := \operatorname{span} \{\zeta_1^\lambda,\ldots,\zeta_{p_{k,m}}^\lambda\} \subset X.
\end{equation*}
For a fixed value of $\lambda$, the output coefficients are determined by the local Galerkin problem
\begin{equation}\label{eq:local-residual-galerkin-problem}
    v_{k,m}(\lambda)\in V_{k,m}(\lambda): \qquad b_{LS}[v_{k,m}(\lambda),w] = \langle r_k(\phi_k^{m-1}),w\rangle, \qquad w\in V_{k,m}(\lambda).
\end{equation}
Writing
\begin{equation*}
    v_{k,m}(\lambda) = \sum_{j=1}^{p_{k,m}} c_j(\lambda)\zeta_j^\lambda,
\end{equation*}
the Galerkin problem becomes
\begin{equation}\label{eq:local-output-system}
    A_{k,m}(\lambda)c(\lambda) = g_{k,m}(\lambda),
\end{equation}
where
\begin{equation*}
    \bigl(A_{k,m}(\lambda)\bigr)_{ij} := b_{LS}[\zeta_j^\lambda,\zeta_i^\lambda], \qquad \bigl(g_{k,m}(\lambda)\bigr)_i := \langle r_k(\phi_k^{m-1}),\zeta_i^\lambda\rangle.
\end{equation*}

The matrix $A_{k,m}(\lambda)$ is symmetric positive semidefinite. Indeed, for any $z=(z_1,\ldots,z_{p_{k,m}})^\top\in\mathbb R^{p_{k,m}}$, let
\begin{equation*}
    w_z^\lambda := \sum_{j=1}^{p_{k,m}}z_j\zeta_j^\lambda.
\end{equation*}
Then
\begin{equation*}
    z^\top A_{k,m}(\lambda)z = b_{LS}[w_z^\lambda,w_z^\lambda] = \|w_z^\lambda\|_{LS}^2.
\end{equation*}
Consequently,
\begin{equation*}
    z\in\ker A_{k,m}(\lambda) \quad\Longleftrightarrow\quad w_z^\lambda=0 \quad\text{in }X.
\end{equation*}
Thus, a null vector of $A_{k,m}(\lambda)$ represents only a redundant coefficient representation of the zero function. Moreover, for every $z\in\ker A_{k,m}(\lambda)$,
\begin{align*}
    z^\top g_{k,m}(\lambda) &=\sum_{j=1}^{p_{k,m}} z_j \langle r_k(\phi_k^{m-1}),\zeta_j^\lambda\rangle \\
    &=\left\langle r_k(\phi_k^{m-1}),w_z^\lambda \right\rangle =0.
\end{align*}
Hence
\begin{equation*}
    g_{k,m}(\lambda) \in \bigl(\ker A_{k,m}(\lambda)\bigr)^\perp = \operatorname{range}A_{k,m}(\lambda),
\end{equation*}
where the last equality follows from the symmetry of $A_{k,m}(\lambda)$. Therefore, the local system \eqref{eq:local-output-system} is consistent even when the features are linearly dependent.

If $A_{k,m}(\lambda)$ is singular, we select the minimum-Euclidean-norm coefficient vector
\begin{equation*}
    c(\lambda) := A_{k,m}(\lambda)^\dagger g_{k,m}(\lambda),
\end{equation*}
where ${}^\dagger$ denotes the Moore--Penrose pseudoinverse. Although the coefficient vector solving \eqref{eq:local-output-system} need not be unique, the corresponding function $v_{k,m}(\lambda)$ is unique. Indeed, any two solutions differ by an element of $\ker A_{k,m}(\lambda)$ and therefore represent the same function in $X$.

The linear coefficients can thus be eliminated for each fixed $\lambda$. The resulting reduced objective is
\begin{equation*}
    \mathscr Q_{k,m}(\lambda) := \|v_{k,m}(\lambda)\|_{LS} = \left(g_{k,m}(\lambda)^\top A_{k,m}(\lambda)^\dagger g_{k,m}(\lambda) \right)^{1/2}.
\end{equation*}
To verify the second equality, observe that
\begin{align*}
    \|v_{k,m}(\lambda)\|_{LS}^2 &=c(\lambda)^\top A_{k,m}(\lambda)c(\lambda) \\
    &=c(\lambda)^\top g_{k,m}(\lambda) \\
    &=g_{k,m}(\lambda)^\top A_{k,m}(\lambda)^\dagger g_{k,m}(\lambda).
\end{align*}

If $v_{k,m}(\lambda)\ne0$, choosing $w=v_{k,m}(\lambda)$ in \eqref{eq:local-residual-galerkin-problem} gives
\begin{equation*}
    \langle r_k(\phi_k^{m-1}),v_{k,m}(\lambda) \rangle = b_{LS}[v_{k,m}(\lambda),v_{k,m}(\lambda)] = \|v_{k,m}(\lambda)\|_{LS}^2.
\end{equation*}
Therefore,
\begin{equation*}
    \frac{\langle r_k(\phi_k^{m-1}),v_{k,m}(\lambda)\rangle}{\|v_{k,m}(\lambda)\|_{LS}}=\|v_{k,m}(\lambda)\|_{LS}=\mathscr Q_{k,m}(\lambda).
\end{equation*}
Thus, $\mathscr Q_{k,m}(\lambda)$ is precisely the normalized residual captured by the local Galerkin direction associated with the feature space $V_{k,m}(\lambda)$. The hidden-parameter training therefore seeks to increase $\mathscr Q_{k,m}(\lambda)$. During the exact variable-projection phase, whenever $\lambda$ is updated, the matrix $A_{k,m}(\lambda)$, the vector $g_{k,m}(\lambda)$, and the optimal output coefficients
\begin{equation*}
    c(\lambda) = A_{k,m}(\lambda)^\dagger g_{k,m}(\lambda)
\end{equation*}
are recomputed before the next hidden-parameter update. If $v_{k,m}(\lambda)=0$, then $\mathscr Q_{k,m}(\lambda)=0$ and no normalized direction is defined. In the computations reported below, the Adam phase uses a numerical realization of this local solve: a symmetric eigendecomposition retains eigenvalues greater than $10^{-12}$ times the largest eigenvalue and inverts only those retained components. This is a spectrally truncated pseudoinverse. The subsequent Levenberg--Marquardt phase jointly updates \((c,\lambda)\). Consequently, the final candidate returned by the latter phase need not solve \eqref{eq:local-residual-galerkin-problem} exactly and is assessed directly by the residual quotient in \eqref{eq:discrete-residual-correlation}. The effect of the local solve must be accounted for when relating the discrete acceptance test to the exact residual-capture assumption \eqref{eq:AD-uniform-quality}. It is not, by itself, the error \(\chi_{n,m,h}\) in the fully discrete adjoint-error analysis below, which concerns the global Galerkin solve on a given enriched space. The optimization may be carried out over the full parameter space or over a bounded implementation-dependent search region. No assertion is made that the training algorithm finds a global maximizer.

\begin{remark}
Fix $(k,m)$, let $e=\phi_k-\phi_k^{m-1}\ne0$, and let $P_{V(\lambda)}e$ be the $b_{LS}$-orthogonal projection of $e$ onto $V_{k,m}(\lambda)$. The local Galerkin solution equals this projection, and whenever it is nonzero, \[\frac{\mathscr Q_{k,m}(\lambda)}{\|e\|_{LS}}=\left\|P_{V(\lambda)}\frac{e}{\|e\|_{LS}}\right\|_{LS}.\] By Lemma~\ref{Density}, for every $\varepsilon\in(0,1)$ there is a finite neural feature space for which \[\inf_{w\in V_{k,m}(\lambda)}\left\|\frac{e}{\|e\|_{LS}}-w\right\|_{LS}<\varepsilon.\] Hence that space admits an exact local Galerkin direction whose residual-capture factor is greater than $1-\varepsilon$. This is a fixed-step existence statement: it is neither uniform in $(k,m)$ nor a guarantee that the nonlinear training algorithm finds such parameters.
\end{remark}

Let $(c_{k,m}^{NN},\lambda_{k,m}^{NN})$ be the parameters returned by training and set
\begin{equation*}
    v_{k,m}^{NN}:=\sum_{j=1}^{p_{k,m}}(c_{k,m}^{NN})_j \zeta_j^{\lambda_{k,m}^{NN}}, \qquad \psi_{k,m}^{NN}:=\frac{v_{k,m}^{NN}}{\|v_{k,m}^{NN}\|_{LS}}.
\end{equation*}
Here the normalization is performed only when $v_{k,m}^{NN}\ne0$. Under exact variable projection, $v_{k,m}^{NN}=v_{k,m}(\lambda_{k,m}^{NN})$; after a joint or inexact optimization phase, this identity need not hold. Although $v_{k,m}^{NN}$ may contain $p_{k,m}$ neurons, it contributes one accepted direction to $Y_{k,m}$. Consequently, after $m$ accepted steps, $\dim Y_{k,m}=m$.

\begin{remark}
In computation, the quadrature used for training is separated from an independent validation quadrature. For \(q\in\{\mathrm{tr},\mathrm{val}\}\) and a nonzero \(v\in X\), provided that \(\widehat{\mathcal J}_k^q(\phi_k^{m-1})>0\) and \(\|v\|_{\mathrm{LS},q}>0\), define the computable correlation 
\begin{equation}\label{eq:discrete-residual-correlation}
    \widehat{\varrho}_{k,m}^{q}(v):=\frac{\widehat{L}_k^{q}(v) -\widehat{b}_{\mathrm{LS}}^{q} [\phi_k^{m-1},v]}{\sqrt{2\widehat{\mathcal{J}}_k^{q}(\phi_k^{m-1})}\, \|v\|_{\mathrm{LS},q}}.
\end{equation}
The sign of \(v_{k,m}^{\mathrm{NN}}\) is fixed by the training residual. A direction is accepted only if, for prescribed constants \(\varrho_{\mathrm{acc}}>0\) and \(\tau_{\mathrm{rank}}>0\), 
\begin{equation}\label{eq:discrete-direction-gates}
\begin{aligned}
    \widehat{\varrho}_{k,m}^{\mathrm{tr}}(v_{k,m}^{\mathrm{NN}}) &\geq \varrho_{\mathrm{acc}}, &\qquad \widehat{\varrho}_{k,m}^{\mathrm{val}}(v_{k,m}^{\mathrm{NN}}) &\geq \varrho_{\mathrm{acc}}, &\qquad \bigl\| (I-\Pi_{k,m-1}^{\mathrm{tr}})\psi_{k,m}^{\mathrm{NN},tr} \bigr\|_{\mathrm{LS},\mathrm{tr}} \geq \tau_{\mathrm{rank}}
\end{aligned}
\end{equation}
with \[\psi_{k,m}^{\mathrm{NN},tr}:=\dfrac{v_{k,m}^{NN}}{\bigl\|v_{k,m}^{NN}\bigr\|_{LS,tr}}.\] Here \(\Pi_{k,m-1}^{\mathrm{tr}}\) denotes the projection onto \(Y_{k,m-1}\) with respect to the training approximation of \(b_{\mathrm{LS}}\), assuming that this bilinear form is positive definite on \(Y_{k,m-1}\). A rejected direction triggers a larger width, additional random restarts, or a larger training budget.

The conditions above are practical tests based on quadrature. By themselves, they do not imply the exact residual-capture bound \(\varrho_{k,m}\geq\varrho_\star\) in \eqref{eq:AD-uniform-quality}. Hence, the convergence estimates that use \eqref{eq:AD-uniform-quality} remain conditional on that exact bound. Establishing an implication from the discrete tests to \eqref{eq:AD-uniform-quality} would additionally require bounds on the quadrature and algebraic errors in the residual, the least-squares functional, and the norm of the candidate direction. Likewise, implementing the exact stopping criterion \eqref{eq:adjoint-stopping-test} by quadrature requires a reliable upper bound for \(\mathcal{J}_k(\phi_k^{m-1})\). Merely requiring \(\eta_{k,m}>0\) is insufficient for a uniform convergence rate.
\end{remark}

We next give the finite-dimensional system associated with the global Galerkin update. After $m$ accepted enrichment steps, write \[Y_{k,m}=\operatorname{span}\{\psi_{k,1}^{NN},\ldots,\psi_{k,m}^{NN}\}\] and \[\phi_k^m=\sum_{j=1}^{m} a_{k,j}^{(m)}\psi_{k,j}^{NN}.\] Let \[a_k^{(m)}:=\begin{pmatrix} a_{k,1}^{(m)}\\ \vdots\\ a_{k,m}^{(m)}\end{pmatrix} \in\mathbb R^m.\]

Testing the Galerkin equation \[b_{LS}[\phi_k^m,v]=L_k(v),\qquad v\in Y_{k,m},\] with $v=\psi_{k,i}^{NN}$ gives
\begin{equation}\label{eq:adjoint-linear-system}
    K_{k,m}a_k^{(m)}=g_{k,m},
\end{equation}
where
\begin{equation*}
    (K_{k,m})_{ij}:=b_{LS}[\psi_{k,j}^{NN},\psi_{k,i}^{NN}],\qquad 1\leq i,j\leq m,
\end{equation*}
and
\begin{equation*}
    (g_{k,m})_i:=L_k(\psi_{k,i}^{NN}).
\end{equation*}
More explicitly, 
\[\begin{aligned}
    (g_{k,m})_i={}& (w_k^0,\psi_{k,i}^{NN}(T))_{H^2(\Omega)}+(w_k^0,\psi_{k,i}^{NN}(T))_{H^1(\Omega)}\\
    &+(w_k^1,\partial_t\psi_{k,i}^{NN}(T))_{H^1(\Omega)}.
\end{aligned}\]

For every $a=(a_1,\ldots,a_m)^\top\in\mathbb R^m$,
\begin{equation*}
    a^\top K_{k,m}a=\left\|\sum_{j=1}^{m}a_j\psi_{k,j}^{NN}\right\|_{LS}^{2}.
\end{equation*}
Since the accepted neural basis functions are linearly independent, $K_{k,m}$ is symmetric positive definite. Therefore, \eqref{eq:adjoint-linear-system} has a unique solution.

\begin{remark}
Linear independence guarantees positive definiteness of $K_{k,m}$, but it does not by itself guarantee a small condition number. In an implementation, the accepted directions may therefore be orthonormalized in the $b_{LS}$-inner product, or an equivalent rank-revealing QR or SVD procedure may be used before solving \eqref{eq:adjoint-linear-system}.
\end{remark}

\medskip
\subsection{Uniform conditioning of the observation Gram matrices}

We first establish uniform conditioning for the observation Gram matrices generated by the exact adjoint modes. We then show that this property is preserved when each exact mode $\phi_k$ is replaced by its adaptive Galerkin neural-network approximation $\phi_k^m\in Y_{k,m}$. The spaces $Y_{k,m}$ may depend on $k$; the argument below uses only uniform control of the individual approximation errors.

For terminal data \[z=(z^0,z^1)\in\mathcal E,\] let $\phi_z$ denote the corresponding solution of the homogeneous adjoint equation, and define the observation operator \[S:\mathcal E\longrightarrow L^2(Q_\omega),\qquad Sz:=\phi_z|_{Q_\omega}.\] In particular, for the $\mathcal E$-orthonormal terminal basis $\{w_k\}_{k\geq1}$ introduced above, \[\phi_k=\phi_{w_k},\qquad Sw_k=\phi_k|_{Q_\omega}.\]

\medskip
\noindent\textbf{Observation hypothesis.}
We assume that there exist constants $C_{\mathrm{obs}},C_{\mathrm{dir}}>0$ such that
\begin{equation}\label{eq:two-sided-observation}
    C_{\mathrm{obs}}^{-1}\|z\|_{\mathcal E}^2\leq\|Sz\|_{L^2(Q_\omega)}^2\leq C_{\mathrm{dir}}\|z\|_{\mathcal E}^2, \qquad z\in\mathcal E.
\end{equation}
The upper bound follows directly from conservation of the dual wave energy: \[\|\phi_z(t)\|_{L^2(\Omega)}^2+\|(\phi_z)_t(t)\|_{H_A^{-1}(\Omega)}^2=\|z\|_{\mathcal E}^2.\] Consequently, $\|Sz\|_{L^2(Q_\omega)}^2\leq T\|z\|_{\mathcal E}^2$, so one may take $C_{\mathrm{dir}}=T$. The lower bound is the observability inequality. For the wave equation, it holds under an appropriate geometric control condition on $(\omega,T)$; see, for example, \cite{blr1992sharp,zuazua2006controllability}.

For convenience, set
\begin{equation*}
    \alpha:=C_{\mathrm{obs}}^{-1/2}, \qquad \beta:=C_{\mathrm{dir}}^{1/2}.
\end{equation*}
Then \eqref{eq:two-sided-observation} can be written as
\begin{equation}\label{eq:observation-alpha-beta}
    \alpha\|z\|_{\mathcal E} \leq \|Sz\|_{L^2(Q_\omega)} \leq \beta\|z\|_{\mathcal E}, \qquad z\in\mathcal E.
\end{equation}

For each $n\geq1$, define \[\mathcal E_n:=\operatorname{span}\{w_1,\ldots,w_n\},\qquad \mathscr H_n:=\operatorname{span}\{\phi_1,\ldots,\phi_n\},\] and \[\mathscr H_{n,\omega}:=\{\phi|_{Q_\omega}:\phi\in\mathscr H_n\}.\] We introduce the operator
\begin{equation}\label{eq:exact-synthesis-operator}
    \Phi_n:\mathbb{R}^n\longrightarrow L^2(Q_\omega), \qquad \Phi_nc := \sum_{k=1}^n c_k\phi_k|_{Q_\omega}.
\end{equation}
The associated observation Gram matrix is
\begin{equation*}
    G_n:=\Phi_n^*\Phi_n = \left((\phi_j,\phi_i)_{L^2(Q_\omega)}\right)_{i,j=1}^n.
\end{equation*}
Here $\Phi_n^*$ denotes the adjoint operator of $\Phi_n$. We use the Loewner ordering for symmetric matrices: \[A\preceq B \quad\Longleftrightarrow\quad c^\top Ac\leq c^\top Bc\quad\text{for every }c\in\mathbb{R}^n.\] Throughout this subsection, $\|A\|_2$ denotes the matrix norm induced by the Euclidean vector norm, and $\kappa_2(A):=\|A\|_2\|A^{-1}\|_2$ for an invertible matrix $A$.

\begin{proposition}\label{prop:exact-gram-stability}
For every $n\geq 1$ and every $c\in\mathbb{R}^n$,
\begin{equation}\label{eq:exact-synthesis-stability}
    \alpha\|c\|_2 \leq\|\Phi_nc\|_{L^2(Q_\omega)} \leq \beta\|c\|_2.
\end{equation}
Consequently,
\begin{equation}\label{eq:exact-gram-spectral-bounds}
    \alpha^2I_n \preceq G_n \preceq \beta^2I_n.
\end{equation}
In particular, $G_n$ is symmetric positive definite and
\begin{equation*}
    \kappa_2(G_n) \leq \frac{\beta^2}{\alpha^2} = C_{\mathrm{obs}}C_{\mathrm{dir}}.
\end{equation*}
The bound is independent of $n$.
\end{proposition}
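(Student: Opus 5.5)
The plan is to reduce everything to the observation hypothesis \eqref{eq:observation-alpha-beta} through linearity of the adjoint map. The key fact is that $\Phi_n$ factors through the terminal space. For $c\in\mathbb R^n$, set
\[
z_c:=\sum_{k=1}^n c_k w_k\in\mathcal E_n\subset\mathcal E .
\]
The backward problem \eqref{Wave3} is linear, and uniqueness of weak solutions holds. Hence the adjoint solution with terminal data $z_c$ is $\phi_{z_c}=\sum_k c_k\phi_k$, and therefore
\[
\Phi_n c=\sum_k c_k\,\phi_k|_{Q_\omega}=S z_c .
\]
In other words, $\Phi_n=S\circ T_n$, where $T_n:\mathbb R^n\to\mathcal E$ is the map $c\mapsto z_c$.

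Next we use orthonormality of the terminal basis. Since $\{w_k\}$ is $\mathcal E$-orthonormal, Parseval's identity gives $\|z_c\|_{\mathcal E}=\|c\|_2$, so $T_n$ is an isometry. Substituting this into \eqref{eq:observation-alpha-beta} with $z=z_c$ yields
\[
\alpha\|c\|_2\le \|Sz_c\|_{L^2(Q_\omega)}=\|\Phi_nc\|_{L^2(Q_\omega)}\le\beta\|c\|_2,
\]
which is \eqref{eq:exact-synthesis-stability}. The constants come from the continuous hypothesis, so they do not depend on $n$.

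For the Gram matrix, we compute
\[
c^\top G_nc=(\Phi_n^*\Phi_nc,c)=\|\Phi_nc\|_{L^2(Q_\omega)}^2 .
\]
Squaring the previous two-sided bound gives $\alpha^2\|c\|_2^2\le c^\top G_nc\le\beta^2\|c\|_2^2$, which is exactly the Loewner statement \eqref{eq:exact-gram-spectral-bounds}. Symmetry of $G_n$ is clear from its entries $(\phi_j,\phi_i)_{L^2(Q_\omega)}$. Since $\alpha>0$, the smallest eigenvalue of $G_n$ is at least $\alpha^2$, so $G_n$ is positive definite. Its largest eigenvalue is at most $\beta^2$. For a symmetric positive definite matrix, $\|G_n\|_2=\lambda_{\max}(G_n)$ and $\|G_n^{-1}\|_2=1/\lambda_{\min}(G_n)$. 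It follows that
\[
\kappa_2(G_n)\le \beta^2/\alpha^2=C_{\mathrm{obs}}C_{\mathrm{dir}} .
\]

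The only point needing care is the identification $\Phi_nc=Sz_c$. It requires that the restriction to $Q_\omega$ of the strong solutions $\phi_k\in H$ coincides with the restriction of the weak solution of \eqref{Wave3} with data $w_k$. This holds because $w_k\in\mathcal V$: strong solutions are weak solutions, and weak solutions are unique. The identification also requires that the $\mathcal E$-norm used in the hypothesis is the same $H_A^{-1}$-based norm in which the $w_k$ are orthonormal, which is how both were defined. Once this is checked, the rest is linear algebra, so I do not expect a genuine obstacle.
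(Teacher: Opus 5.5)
Your proposal is correct and follows essentially the same route as the paper: you factor $\Phi_n$ through the isometry $c\mapsto z_c=\sum_k c_kw_k$, apply the two-sided observation inequality \eqref{eq:observation-alpha-beta}, and read off the Loewner and condition-number bounds from $c^\top G_nc=\|\Phi_nc\|_{L^2(Q_\omega)}^2$. Your extra check, that the strong solutions $\phi_k$ coincide with the weak solutions used to define $S$, is a harmless refinement that the paper leaves implicit.
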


\begin{proof}
For $c=(c_1,\ldots,c_n)^\top\in\mathbb{R}^n$, define \[z_c:=\sum_{k=1}^n c_kw_k\in\mathcal E_n.\] Since the terminal data $\{w_k\}_{k\geq1}$ form an orthonormal basis of $\mathcal E$, we have
\begin{equation}\label{eq:terminal-coefficient-isometry}
    \|z_c\|_{\mathcal E}^2 = \left\|\sum_{k=1}^nc_kw_k\right\|_{\mathcal E}^2=\sum_{k=1}^n|c_k|^2=\|c\|_2^2.
\end{equation}
By linearity of the adjoint equation, \[\phi_{z_c} = \sum_{k=1}^n c_k\phi_k.\] Therefore, \[Sz_c = \sum_{k=1}^n c_k\phi_k|_{Q_\omega} = \Phi_nc.\] Applying \eqref{eq:observation-alpha-beta} to $z_c$ and using \eqref{eq:terminal-coefficient-isometry}, we obtain \[\alpha\|c\|_2 = \alpha\|z_c\|_{\mathcal E} \leq\|Sz_c\|_{L^2(Q_\omega)}=\|\Phi_nc\|_{L^2(Q_\omega)}\leq\beta\|z_c\|_{\mathcal E} = \beta\|c\|_2.\] This proves \eqref{eq:exact-synthesis-stability}.

Moreover, \[c^\top G_nc = c^\top\Phi_n^*\Phi_nc = \|\Phi_nc\|_{L^2(Q_\omega)}^2.\] Squaring \eqref{eq:exact-synthesis-stability} yields \[\alpha^2\|c\|_2^2 \leq c^\top G_nc \leq \beta^2\|c\|_2^2,\] which is equivalent to \eqref{eq:exact-gram-spectral-bounds}.

It follows that \[ \lambda_{\min}(G_n)\geq\alpha^2, \qquad \lambda_{\max}(G_n)\leq\beta^2. \] Hence, \[ \kappa_2(G_n) = \frac{\lambda_{\max}(G_n)}{\lambda_{\min}(G_n)}\leq\frac{\beta^2}{\alpha^2} = C_{\mathrm{obs}}C_{\mathrm{dir}}.\]
\end{proof}

We now examine the observation Gram matrix obtained from the approximations $\phi_k^m\in Y_{k,m}$ constructed in the previous subsection. We define
\begin{equation}\label{eq:adjoint-uniform-error}
    \varepsilon_{n,m}=\max_{1\leq k\leq n}\|\phi_k-\phi_k^m\|_{LS}.
\end{equation}
Here $m$ counts accepted enrichment steps. If the construction for mode $k$ terminates after $m_k$ steps, we adopt the convention \[Y_{k,m}:=Y_{k,m_k}, \qquad\phi_k^m:=\phi_k^{m_k},\qquad m\geq m_k,\] so that a common index $m$ can be used for all modes.

For the quantitative estimates below, set
\begin{equation*}
    \mathfrak B_n:=\max_{1\leq k\leq n}\|\phi_k\|_{LS}.
\end{equation*}

We shall use the following uniform residual-capture condition: there exists a constant $\varrho_\star\in(0,1)$, independent of $k$, $j$, and $n$, such that
\begin{equation}\label{eq:AD-uniform-quality}
    \varrho_{k,j}\geq\varrho_\star,\qquad 1\leq k\leq n,
\end{equation}
at every accepted enrichment step. Define
\begin{equation*}
    \mu_\star:=\sqrt{1-\varrho_\star^2}\in(0,1),\qquad\mathrm{Tol}_n:=\max_{1\leq k\leq n}\mathrm{tol}_k.
\end{equation*}

\begin{corollary}\label{cor:uniform-adaptive-adjoint-rate}
Suppose that \eqref{eq:AD-uniform-quality} holds. If every mode $1\leq k\leq n$ undergoes $m$ accepted enrichment steps, then
\begin{equation*}
    \varepsilon_{n,m}\leq\mathfrak B_n\mu_\star^m.
\end{equation*}
More generally, if each mode either undergoes $m$ accepted steps or terminates earlier only after satisfying \eqref{eq:adjoint-stopping-test}, then
\begin{equation*}
    \varepsilon_{n,m}\leq\max\{\mathfrak B_n\mu_\star^m,\mathrm{Tol}_n\}\leq\mathfrak B_n\mu_\star^m+\mathrm{Tol}_n.
\end{equation*}
\end{corollary}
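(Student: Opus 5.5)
The plan is to iterate the one-step contraction of Proposition~\ref{prop:neural-residual-error-reduction} for each mode separately and then take the maximum over $1\le k\le n$. Fix $k\le n$ and set $e_k^j:=\phi_k-\phi_k^j$. The initialization $\phi_k^0=0$ gives $\|e_k^0\|_{LS}=\|\phi_k\|_{LS}\le\mathfrak B_n$.

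Suppose $e_k^{j-1}\neq0$ and step $j$ is accepted. Then \eqref{eq:adjoint-error-contraction} combined with \eqref{eq:AD-uniform-quality} yields
\[
\|e_k^j\|_{LS}\le\sqrt{1-\varrho_{k,j}^2}\,\|e_k^{j-1}\|_{LS}\le\mu_\star\|e_k^{j-1}\|_{LS}.
\]
Here I use that $s\mapsto\sqrt{1-s^2}$ is decreasing on $[0,1]$. If instead $e_k^{j-1}=0$ at some stage, the estimate holds trivially, since by \eqref{eq:monotone-adjoint-error} the error stays $0$ at all later steps. Induction over $j=1,\dots,m$ then gives $\|e_k^m\|_{LS}\le\mu_\star^m\|\phi_k\|_{LS}\le\mathfrak B_n\mu_\star^m$. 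Taking the maximum over $k$ in \eqref{eq:adjoint-uniform-error} proves the first claim.

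For the general case, split the modes into two classes.
\begin{itemize}
\item Modes that complete $m$ accepted steps obey the bound just derived.
\item A mode terminating after $m_k<m$ accepted steps does so because \eqref{eq:adjoint-stopping-test} holds at $\phi_k^{m_k}$. By the identity \eqref{eq:objective-error-identity}, this means $\|\phi_k-\phi_k^{m_k}\|_{LS}\le\mathrm{tol}_k\le\mathrm{Tol}_n$. The freezing convention $\phi_k^m:=\phi_k^{m_k}$ transfers this bound to index $m$.
\end{itemize}
Every mode therefore satisfies $\|\phi_k-\phi_k^m\|_{LS}\le\max\{\mathfrak B_n\mu_\star^m,\mathrm{Tol}_n\}$. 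Maximizing over $k$ and using $\max\{a,b\}\le a+b$ for nonnegative $a,b$ finishes the proof.

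The main point needing care is the indexing of the stopping test. The test is evaluated at $\phi_k^{m-1}$ before a new direction is added, so I must check that the frozen iterate is exactly the one at which \eqref{eq:adjoint-stopping-test} was verified. I must also check that \eqref{eq:AD-uniform-quality} is invoked only at accepted steps, where $\varrho_{k,j}$ is defined because $e_k^{j-1}\ne0$. Otherwise the argument is a routine induction.
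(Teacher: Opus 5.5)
Your proposal is correct and follows the paper's proof: you iterate the one-step contraction of Proposition~\ref{prop:neural-residual-error-reduction} under \eqref{eq:AD-uniform-quality} starting from $\phi_k^0=0$, bound early-terminating modes by $\mathrm{tol}_k$ through \eqref{eq:adjoint-stopping-test} and \eqref{eq:objective-error-identity}, and take the maximum over $k$. Your additional checks make explicit details that the paper leaves implicit; these are the monotonicity of $s\mapsto\sqrt{1-s^2}$, the zero-error case, and the fact that the frozen iterate $\phi_k^{m_k}$ is exactly where the stopping test was verified.
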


\begin{proof}
At every accepted step, Proposition~\ref{prop:neural-residual-error-reduction} and \eqref{eq:AD-uniform-quality} give \[\|\phi_k-\phi_k^j\|_{LS}\leq\mu_\star\|\phi_k-\phi_k^{j-1}\|_{LS}.\] Since $\phi_k^0=0$, iteration gives $\|\phi_k-\phi_k^m\|_{LS}\leq \mu_\star^m\|\phi_k\|_{LS}$. If mode $k$ terminates early, its error is at most $\mathrm{tol}_k$ by \eqref{eq:adjoint-stopping-test}. Taking the maximum over $1\leq k\leq n$ proves both estimates.
\end{proof}

\begin{remark}
The early-termination part of Corollary~\ref{cor:uniform-adaptive-adjoint-rate} covers only modes that pass the certified stopping test \eqref{eq:adjoint-stopping-test}. Termination caused by stagnation, a restart limit, or a step budget smaller than the common index $m$ is not covered by this corollary. For such a mode, the actual error $\sqrt{2\mathcal J_k(\widetilde\phi_k^{m,h})}$ enters the fully discrete analysis through \eqref{eq:adjoint-error-bound}.
\end{remark}

Let $C_X>0$ be an embedding constant from Lemma~\ref{lem:ls-embedding}, so that
\begin{equation}\label{eq:ls-to-observation}
    \|v\|_{L^2(Q_\omega)}\leq C_X\|v\|_{LS}, \qquad v\in X.
\end{equation}

A conservative explicit choice can be obtained from the proof of Lemma~\ref{lem:ls-embedding}. Let $C_p$ satisfy $\|z\|_{L^2(\Omega)}\leq C_p\|\nabla z\|_{L^2(\Omega)}$ for $z\in H_0^1(\Omega)$. With the standard Sobolev norms, one may choose
\begin{equation}\label{eq:explicit-ls-embedding-constant}
 C_X=\frac{C_p}{\sqrt{a_0}}\left(T\max\{1,a_1\}+\frac{T^2}{2}\right)^{1/2}.
\end{equation}
Indeed, set $N_T^2=\|v(T)\|_{H^1(\Omega)}^2+\|v_t(T)\|_{H^1(\Omega)}^2$ and $N_P=\|\mathcal Pv\|_{L^2(Q)}$. The energy estimate used in that proof and Poincar\'e's inequality give
\[
 \|v\|_{L^2(Q)}\leq\frac{C_p}{\sqrt{a_0}}
 \left(\sqrt{T\max\{1,a_1\}}\,N_T+\frac{T}{\sqrt2}N_P\right).
\]
Here the second term follows from
\[
 \int_0^T\left(\int_t^T\|\mathcal Pv(s)\|_{L^2(\Omega)}\,ds\right)^2dt
 \leq\frac{T^2}{2}\|\mathcal Pv\|_{L^2(Q)}^2.
\]
Cauchy--Schwarz and $N_T^2+N_P^2\leq\|v\|_{LS}^2$ prove the claim, first on $X_0$ and then by completion. A bound for $C_p$, for example from a containing domain, thus gives a conservative computable embedding bound. It does not determine the observability constant or certify the numerical adjoint and assembly errors.

Set
\begin{equation}\label{eq:dnm-definition}
    d_{n,m} := C_X\sqrt{n}\,\varepsilon_{n,m}.
\end{equation}

Define the operator
\begin{equation}\label{eq:approximate-synthesis-operator}
    \Phi_n^m:\mathbb{R}^n\longrightarrow L^2(Q_\omega), \qquad \Phi_n^mc := \sum_{k=1}^n c_k\phi_k^m|_{Q_\omega},
\end{equation}
and the corresponding Gram matrix
\begin{equation*}
    G_n^m := (\Phi_n^m)^*\Phi_n^m = \left((\phi_j^m,\phi_i^m)_{L^2(Q_\omega)}\right)_{i,j=1}^n.
\end{equation*}

\begin{proposition}\label{prop:approximate-gram-stability}
The operators $\Phi_n$ and $\Phi_n^m$ satisfy
\begin{equation}\label{eq:synthesis-operator-perturbation}
    \|\Phi_n^m-\Phi_n\|_{\ell^2\to L^2(Q_\omega)}\leq d_{n,m}.
\end{equation}
Consequently, for every $c\in\mathbb{R}^n$,
\begin{equation}\label{eq:approximate-synthesis-stability}
    (\alpha-d_{n,m})\|c\|_2 \leq \|\Phi_n^mc\|_{L^2(Q_\omega)} \leq (\beta+d_{n,m})\|c\|_2.
\end{equation}
If
\begin{equation*}
    d_{n,m}<\alpha,
\end{equation*}
then $G_n^m$ is symmetric positive definite and satisfies
\begin{equation}\label{eq:approximate-gram-spectral-bounds}
    (\alpha-d_{n,m})^2I_n \preceq G_n^m \preceq (\beta+d_{n,m})^2I_n.
\end{equation}
In particular,
\begin{equation}\label{eq:approximate-gram-condition-number}
    \kappa_2(G_n^m) \leq \left(\frac{\beta+d_{n,m}}{\alpha-d_{n,m}}\right)^2.
\end{equation}
\end{proposition}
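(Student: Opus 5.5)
The plan is to estimate the perturbation operator $\Phi_n^m-\Phi_n$ directly, column by column, and then transfer the two-sided bounds of Proposition~\ref{prop:exact-gram-stability} by the triangle inequality. For $c\in\mathbb R^n$ write
\[
(\Phi_n^m-\Phi_n)c=\sum_{k=1}^n c_k(\phi_k^m-\phi_k)|_{Q_\omega}.
\]
The triangle inequality and the embedding \eqref{eq:ls-to-observation} give
\[
\|(\Phi_n^m-\Phi_n)c\|_{L^2(Q_\omega)}\le\sum_{k=1}^n|c_k|\,C_X\|\phi_k^m-\phi_k\|_{LS}\le C_X\varepsilon_{n,m}\|c\|_1 .
\]
Applying Cauchy--Schwarz in the form $\|c\|_1\le\sqrt n\,\|c\|_2$ then yields \eqref{eq:synthesis-operator-perturbation} with the constant $d_{n,m}=C_X\sqrt n\,\varepsilon_{n,m}$ from \eqref{eq:dnm-definition}. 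This is the step where the worst-case factor $\sqrt n$ enters. No orthogonality of the errors $\phi_k^m-\phi_k$ in $L^2(Q_\omega)$ is available, because the spaces $Y_{k,m}$ are built independently for each mode, so the crude $\ell^1$ bound is all I would use.

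Next, I would write $\Phi_n^mc=\Phi_nc+(\Phi_n^m-\Phi_n)c$ and combine the reverse triangle inequality with \eqref{eq:exact-synthesis-stability}:
\[
\|\Phi_n^mc\|\ge\|\Phi_nc\|-d_{n,m}\|c\|_2\ge(\alpha-d_{n,m})\|c\|_2,
\]
and similarly $\|\Phi_n^mc\|\le(\beta+d_{n,m})\|c\|_2$. This gives \eqref{eq:approximate-synthesis-stability}.

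For the Gram matrix, symmetry of $G_n^m=(\Phi_n^m)^*\Phi_n^m$ is immediate from symmetry of the $L^2(Q_\omega)$ inner product. Moreover, $c^\top G_n^mc=\|\Phi_n^mc\|_{L^2(Q_\omega)}^2$. When $d_{n,m}<\alpha$, both bounds in \eqref{eq:approximate-synthesis-stability} are nonnegative. Squaring them gives the Loewner bounds \eqref{eq:approximate-gram-spectral-bounds}, and in particular positive definiteness, since $(\alpha-d_{n,m})^2>0$. Finally, for a symmetric positive definite matrix one has $\kappa_2(G_n^m)=\lambda_{\max}(G_n^m)/\lambda_{\min}(G_n^m)$. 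Bounding these eigenvalues by the Rayleigh quotients gives \eqref{eq:approximate-gram-condition-number}.

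There is no real obstacle in this argument. The only points needing care are that the lower bound must stay positive before squaring, which is why the hypothesis $d_{n,m}<\alpha$ is needed, and the choice of the $\ell^1$-to-$\ell^2$ step, which is what produces $\sqrt n$.
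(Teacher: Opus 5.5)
Your proposal is correct and follows the paper's proof essentially step for step. Like the paper, you bound the perturbation column by column with the embedding constant $C_X$ and use $\|c\|_1\le\sqrt n\,\|c\|_2$ to obtain $d_{n,m}$, compare with Proposition~\ref{prop:exact-gram-stability} via the (reverse) triangle inequality, and then, under $d_{n,m}<\alpha$, square the Rayleigh-quotient bounds $c^\top G_n^m c=\|\Phi_n^m c\|_{L^2(Q_\omega)}^2$ to get the Loewner bounds, positive definiteness, and the condition-number estimate.
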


\begin{proof}
Let $c=(c_1,\ldots,c_n)^\top\in\mathbb{R}^n$. From \eqref{eq:exact-synthesis-operator} and \eqref{eq:approximate-synthesis-operator}, \[(\Phi_n^m-\Phi_n)c = \sum_{k=1}^n c_k(\phi_k^m-\phi_k)|_{Q_\omega}.\] Using \eqref{eq:ls-to-observation}, the triangle inequality, and the definition of $\varepsilon_{n,m}$, we obtain
\begin{align*}
    \|(\Phi_n^m-\Phi_n)c\|_{L^2(Q_\omega)} &\leq C_X \sum_{k=1}^n |c_k| \|\phi_k^m-\phi_k\|_{LS}\leq C_X\varepsilon_{n,m} \sum_{k=1}^n|c_k|.
\end{align*}
The Cauchy--Schwarz inequality gives \[\sum_{k=1}^n|c_k|\leq\sqrt{n}\|c\|_2.\] Therefore, \[\|(\Phi_n^m-\Phi_n)c\|_{L^2(Q_\omega)}\leq C_X\sqrt{n}\,\varepsilon_{n,m}\|c\|_2 = d_{n,m}\|c\|_2,\] which proves \eqref{eq:synthesis-operator-perturbation}.

By the triangle inequality and Proposition~\ref{prop:exact-gram-stability},
\begin{align*}
    \|\Phi_n^mc\|_{L^2(Q_\omega)} &\geq \|\Phi_nc\|_{L^2(Q_\omega)} - \|(\Phi_n^m-\Phi_n)c\|_{L^2(Q_\omega)}\geq (\alpha-d_{n,m})\|c\|_2.
\end{align*}
Similarly,
\begin{align*}
    \|\Phi_n^mc\|_{L^2(Q_\omega)} &\leq \|\Phi_nc\|_{L^2(Q_\omega)} + \|(\Phi_n^m-\Phi_n)c\|_{L^2(Q_\omega)}\leq (\beta+d_{n,m})\|c\|_2.
\end{align*}
This proves \eqref{eq:approximate-synthesis-stability}.

Under the assumption $d_{n,m}<\alpha$, both sides of the lower estimate in \eqref{eq:approximate-synthesis-stability} are nonnegative. Since \[c^\top G_n^mc = \|\Phi_n^mc\|_{L^2(Q_\omega)}^2,\] squaring the two estimates yields \eqref{eq:approximate-gram-spectral-bounds}. Moreover, \[\lambda_{\min}(G_n^m) \geq (\alpha-d_{n,m})^2>0,\] so $G_n^m$ is positive definite. Finally, \[\lambda_{\max}(G_n^m) \leq (\beta+d_{n,m})^2,\] which gives \eqref{eq:approximate-gram-condition-number}.
\end{proof}

The condition $d_{n,m}<\alpha$ ensures invertibility, but it does not provide a condition-number bound that is uniform in $n$ and $m$, because $d_{n,m}$ may approach $\alpha$. A uniform relative perturbation condition gives the following consequence.

\begin{corollary}\label{cor:uniform-approximate-gram-conditioning}
Let $0<\xi<1$, and suppose that
\begin{equation*}
    d_{n,m}\leq\xi\alpha.
\end{equation*}
Then
\begin{equation*}
    (1-\xi)^2\alpha^2I_n \preceq G_n^m \preceq (\beta+\xi\alpha)^2I_n,
\end{equation*}
and
\begin{equation*}
    \kappa_2(G_n^m) \leq \left(\frac{\beta+\xi\alpha}{(1-\xi)\alpha}\right)^2.
\end{equation*}
In particular, if \[d_{n,m}\leq\frac{\alpha}{2},\] then
\begin{equation*}
    \frac{\alpha^2}{4}I_n\preceq G_n^m\preceq\left(\beta+\frac{\alpha}{2}\right)^2I_n,
\end{equation*}
and
\begin{equation*}
    \|(G_n^m)^{-1}\|_2 \leq \frac{4}{\alpha^2}.
\end{equation*}
\end{corollary}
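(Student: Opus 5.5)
The plan is to derive the statement directly from Proposition~\ref{prop:approximate-gram-stability}, substituting the bound $d_{n,m}\le\xi\alpha$ into the spectral estimates \eqref{eq:approximate-gram-spectral-bounds}. Since $0<\xi<1$, we have $d_{n,m}\le\xi\alpha<\alpha$. Hence the hypothesis of Proposition~\ref{prop:approximate-gram-stability} holds, $G_n^m$ is symmetric positive definite, and
\[(\alpha-d_{n,m})^2I_n\preceq G_n^m\preceq(\beta+d_{n,m})^2I_n.\]

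Next, I would use monotonicity in $d_{n,m}$. On $[0,\alpha)$, the map $d\mapsto(\alpha-d)^2$ is nonincreasing and $d\mapsto(\beta+d)^2$ is nondecreasing. Since $0\le d_{n,m}\le\xi\alpha$, this gives $(\alpha-d_{n,m})^2\ge(1-\xi)^2\alpha^2$ and $(\beta+d_{n,m})^2\le(\beta+\xi\alpha)^2$. Transitivity of the Loewner order then yields the two-sided bound. As a consequence, $\lambda_{\min}(G_n^m)\ge(1-\xi)^2\alpha^2$ and $\lambda_{\max}(G_n^m)\le(\beta+\xi\alpha)^2$. For a symmetric positive definite matrix, $\kappa_2=\lambda_{\max}/\lambda_{\min}$, which gives the stated condition-number bound.

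The special case is $\xi=1/2$. Then $(1-\xi)^2\alpha^2=\alpha^2/4$, and $\|(G_n^m)^{-1}\|_2=1/\lambda_{\min}(G_n^m)\le4/\alpha^2$.

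There is no genuine obstacle here. The only point that needs care is to note explicitly that $\alpha-d_{n,m}\ge(1-\xi)\alpha>0$. This ensures that squaring the lower bound in \eqref{eq:approximate-synthesis-stability} preserves the inequality, which is already handled inside Proposition~\ref{prop:approximate-gram-stability}.
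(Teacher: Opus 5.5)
Your proposal is correct and follows essentially the same route as the paper: you substitute $d_{n,m}\le\xi\alpha$ into Proposition~\ref{prop:approximate-gram-stability} via $\alpha-d_{n,m}\ge(1-\xi)\alpha>0$ and $\beta+d_{n,m}\le\beta+\xi\alpha$, then read off $\kappa_2$ and, for $\xi=1/2$, $\|(G_n^m)^{-1}\|_2=1/\lambda_{\min}(G_n^m)\le 4/\alpha^2$. Your explicit remark that $d_{n,m}<\alpha$ is needed to invoke the proposition is a small point the paper leaves implicit.
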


\begin{proof}
The assumption $d_{n,m}\leq\xi\alpha$ gives \[\alpha-d_{n,m} \geq (1-\xi)\alpha, \qquad \beta+d_{n,m} \leq \beta+\xi\alpha.\] The conclusions follow from Proposition~\ref{prop:approximate-gram-stability}. For $\xi=1/2$, we obtain \[\lambda_{\min}(G_n^m) \geq \frac{\alpha^2}{4},\] and hence \[\|(G_n^m)^{-1}\|_2 = \frac{1}{\lambda_{\min}(G_n^m)} \leq \frac{4}{\alpha^2}.\]
\end{proof}

\medskip
\subsection{A coupled finite-error estimate}

We now compare two finite-dimensional control reconstructions. The first is defined using the exact adjoint solutions and, after restriction to $Q_\omega$, is the orthogonal projection of the minimal-norm control onto the truncated observation space. The second is obtained by replacing the exact adjoint solutions with their adaptive Galerkin neural-network approximations.

By Proposition~\ref{prop:minimum-norm-exact-control}, the control $f^*\in\mathscr H_\omega$ satisfies the moment identity
\begin{equation}\label{eq:exact-control-moment-identity}
    (f^*,\varphi)_{L^2(Q_\omega)} = L(\varphi)
\end{equation}
for every exact adjoint solution $\varphi$.

For $n\geq 1$, define
\begin{equation*}
    b_n := \begin{pmatrix} L(\phi_1)\\ \vdots\\ L(\phi_n) \end{pmatrix} \in\mathbb{R}^n.
\end{equation*}
Let $a_n\in\mathbb{R}^n$ be the unique solution of
\begin{equation}\label{eq:exact-reconstruction-system}
    G_na_n=b_n,
\end{equation}
and define
\begin{equation*}
    f_n^H := \sum_{k=1}^n(a_n)_k\phi_k.
\end{equation*}
The superscript $H$ indicates that this is the finite-dimensional HUM reconstruction based on exact adjoint modes. By the definition of $\Phi_n$, its restriction to the observation domain is
\begin{equation*}
    f_n^H|_{Q_\omega} = \Phi_na_n \in \mathscr H_{n,\omega}.
\end{equation*}
The system \eqref{eq:exact-reconstruction-system} is uniquely solvable by Proposition~\ref{prop:exact-gram-stability}.

\begin{lemma}\label{lem:exact-reconstruction-projection}
The restriction of $f_n^H$ to $Q_\omega$ is the $L^2(Q_\omega)$-orthogonal projection of $f^*$ onto $\mathscr H_{n,\omega}$:
\begin{equation}\label{eq:exact-reconstruction-projection}
    f_n^H|_{Q_\omega} = P_{\mathscr H_{n,\omega}}f^*.
\end{equation}
Consequently,
\begin{equation}\label{eq:truncation-error-definition}
    \tau_n := \|f_n^H|_{Q_\omega}-f^*\|_{L^2(Q_\omega)} =  \operatorname{dist}_{L^2(Q_\omega)} (f^*,\mathscr H_{n,\omega}).
\end{equation}
Moreover,
\begin{equation}\label{eq:truncation-error-monotone}
    \tau_{n+1}\leq\tau_n,
\end{equation}
and
\begin{equation}\label{eq:truncation-error-convergence}
    \tau_n\longrightarrow 0 \qquad \text{as }n\to\infty.
\end{equation}
\end{lemma}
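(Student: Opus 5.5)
The plan is to verify the projection property through the normal equations. Then monotonicity follows from nesting of the spaces, and convergence from density of the full observation space.

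\textbf{Projection identity.} By Proposition~\ref{prop:exact-gram-stability}, $\{\phi_k|_{Q_\omega}\}_{k\le n}$ is linearly independent and spans $\mathscr H_{n,\omega}$. Hence an element $g=\Phi_n a\in\mathscr H_{n,\omega}$ equals $P_{\mathscr H_{n,\omega}}f^*$ if and only if
\[
(f^*-\Phi_n a,\phi_i)_{L^2(Q_\omega)}=0,\qquad i=1,\dots,n.
\]
In matrix form this reads $G_n a=\bigl((f^*,\phi_i)_{L^2(Q_\omega)}\bigr)_i$. Each $\phi_i$ is an exact adjoint solution with terminal data in $\mathcal V$, so the moment identity \eqref{eq:exact-control-moment-identity} gives $(f^*,\phi_i)_{L^2(Q_\omega)}=L(\phi_i)=(b_n)_i$. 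The normal equations are therefore exactly \eqref{eq:exact-reconstruction-system}. Since that system has a unique solution, $a=a_n$, which proves \eqref{eq:exact-reconstruction-projection}. The formula \eqref{eq:truncation-error-definition} is then the best-approximation property of orthogonal projections.

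\textbf{Monotonicity.} We have $\mathscr H_{n,\omega}\subset\mathscr H_{n+1,\omega}$, so the distance to the larger space can only be smaller. This gives \eqref{eq:truncation-error-monotone}.

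\textbf{Convergence.} This is the main step. I would show that
\[
\bigcup_n\mathscr H_{n,\omega}\ \text{is dense in}\ \mathscr H_\omega .
\]
The claim \eqref{eq:truncation-error-convergence} then follows, because $f^*\in\mathscr H_\omega$ by Proposition~\ref{prop:minimum-norm-exact-control}.

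Take a generator $\varphi|_{Q_\omega}$ of $\mathscr H_\omega$, where $\varphi\in H$ has terminal data $z\in\mathcal V\subset\mathcal E$. Let $z_n=\sum_{k\le n}(z,w_k)_{\mathcal E}w_k\in\mathcal E_n$. Then $z_n\to z$ in $\mathcal E$, since $\{w_k\}$ is an orthonormal basis of $\mathcal E$. By linearity, $Sz_n=\sum_{k\le n}(z,w_k)_{\mathcal E}\phi_k|_{Q_\omega}$ lies in $\mathscr H_{n,\omega}$. The upper bound in \eqref{eq:observation-alpha-beta}, which comes from energy conservation alone, gives
\[
\|Sz-Sz_n\|_{L^2(Q_\omega)}\le\beta\|z-z_n\|_{\mathcal E}\to0 .
\]
One point needs care: identifying $Sz$ with $\varphi|_{Q_\omega}$. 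The operator $S$ is defined on the weak solution, and for $z\in\mathcal V$ the weak solution coincides with the strong one in $H$.

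So every generator of $\mathscr H_\omega$ lies in the closure of $\bigcup_n\mathscr H_{n,\omega}$. Because $\mathscr H_\omega$ is by definition the closure of these generators, the union is dense in $\mathscr H_\omega$. Consequently $\tau_n=\operatorname{dist}(f^*,\mathscr H_{n,\omega})\to0$.
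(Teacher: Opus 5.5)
Your proposal is correct and follows essentially the same route as the paper: the moment identity turns the reconstruction system into the normal equations $\Phi_n^*(\Phi_n a_n - f^*)=0$, nesting of the spaces gives monotonicity, and density of $\bigcup_n\mathcal E_n$ in $\mathcal E$ together with the bound $\|Sz\|_{L^2(Q_\omega)}\le\beta\|z\|_{\mathcal E}$ gives $\tau_n\to0$. The only minor difference is that you approximate the generators $\varphi|_{Q_\omega}$, $\varphi\in H$, directly, whereas the paper first identifies $\mathscr H_\omega=\overline{S(\mathcal E)}$ using density of $\mathcal V$ in $\mathcal E$; your version avoids that identification, since only the inclusion $\mathscr H_\omega\subset\overline{\bigcup_n\mathscr H_{n,\omega}}$ is needed.
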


\begin{proof}
The moment identity \eqref{eq:exact-control-moment-identity} gives \[b_n=\Phi_n^*f^*.\] Since $G_n=\Phi_n^*\Phi_n$, the coefficient system \eqref{eq:exact-reconstruction-system} can be written as \[\Phi_n^*(\Phi_na_n-f^*)=0.\] Therefore, for every $c\in\mathbb{R}^n$,
\begin{align*}
    (f_n^H|_{Q_\omega}-f^*,\Phi_nc)_{L^2(Q_\omega)} &= (\Phi_na_n-f^*,\Phi_nc)_{L^2(Q_\omega)} \\
    &= c^\top\Phi_n^*(\Phi_na_n-f^*) \\
    &= 0.
\end{align*}
Since \[f_n^H|_{Q_\omega}=\Phi_na_n\in\operatorname{Ran}(\Phi_n)=\mathscr H_{n,\omega},\] this is precisely the characterization of the orthogonal projection of $f^*$ onto $\mathscr H_{n,\omega}$. This proves \eqref{eq:exact-reconstruction-projection} and \eqref{eq:truncation-error-definition}.

The inclusion \[\mathscr H_{n,\omega}\subset \mathscr H_{n+1,\omega}\] implies \[\operatorname{dist}(f^*,\mathscr H_{n+1,\omega})\leq\operatorname{dist}(f^*,\mathscr H_{n,\omega}),\] which proves \eqref{eq:truncation-error-monotone}.

To prove convergence, recall that \[\mathcal E_n=\operatorname{span}\{w_1,\ldots,w_n\},\qquad \mathscr H_{n,\omega}=S(\mathcal E_n).\] Since $\{w_k\}_{k\geq 1}$ is a complete orthonormal basis of $\mathcal E$, \[\overline{\bigcup_{n\geq 1}\mathcal E_n}^{\,\mathcal E}=\mathcal E.\] The observation operator \[S:\mathcal E\longrightarrow L^2(Q_\omega)\] is continuous. Since the strong terminal space $\mathcal V$ is dense in $\mathcal E$, the definition of $\mathscr H_\omega$ also gives $\mathscr H_\omega=\overline{S(\mathcal E)}^{\,L^2(Q_\omega)}$. Hence, \[\overline{\bigcup_{n\geq 1}\mathscr H_{n,\omega}}^{\,L^2(Q_\omega)} = \overline{S(\mathcal E)}^{\,L^2(Q_\omega)} = \mathscr H_\omega.\] Since $f^*\in \mathscr H_\omega$, it follows that \[\operatorname{dist}_{L^2(Q_\omega)}(f^*,\mathscr H_{n,\omega})\longrightarrow 0.\] This proves \eqref{eq:truncation-error-convergence}.
\end{proof}

We now define the reconstruction obtained from the approximate adjoint solutions. Let
\begin{equation*}
    b_n^m := \begin{pmatrix} L(\phi_1^m)\\ \vdots\\ L(\phi_n^m) \end{pmatrix} \in\mathbb{R}^n.
\end{equation*}
Each $\phi_k^m$ belongs to its mode-dependent space $Y_{k,m}$; no common trial space is required in the reconstruction argument. Whenever $G_n^m$ is positive definite, let $a_n^m\in\mathbb{R}^n$ solve
\begin{equation*}
    G_n^ma_n^m=b_n^m,
\end{equation*}
and define
\begin{equation*}
    f_n^m := \sum_{k=1}^n(a_n^m)_k\phi_k^m.
\end{equation*}
Its restriction to $Q_\omega$ is
\begin{equation*}
    f_n^m|_{Q_\omega} = \Phi_n^ma_n^m.
\end{equation*}

\begin{remark}
For the exact adjoint solutions, the moment identity gives \[L(\phi_k)=(f^*,\phi_k)_{L^2(Q_\omega)}.\] In general, however, \[L(\phi_k^m)\neq(f^*,\phi_k^m)_{L^2(Q_\omega)},\] because $\phi_k^m$ is not an exact solution of the homogeneous adjoint equation. Therefore, $f_n^m|_{Q_\omega}$ need not be the orthogonal projection of $f^*$ onto \[\operatorname{span} \{\phi_1^m|_{Q_\omega},\ldots,\phi_n^m|_{Q_\omega}\}.\] The perturbations of both the Gram matrix and the right-hand side must therefore be estimated.
\end{remark}

Let
\begin{equation*}
    C_L := \|L\|_{X'} = \sup_{v\in X\setminus\{0\}} \frac{|L(v)|}{\|v\|_{LS}}.
\end{equation*}
Lemma~\ref{lem:L-continuity} gives \[|L(v)|\leq C_L\|v\|_{LS},\qquad v\in X.\]

\begin{lemma}
Let $\varepsilon_{n,m}$ and $d_{n,m}$ be defined by \eqref{eq:adjoint-uniform-error} and \eqref{eq:dnm-definition}, respectively. Then
\begin{equation}\label{eq:rhs-perturbation-bound}
    \|b_n^m-b_n\|_2 \leq C_L\sqrt{n}\,\varepsilon_{n,m},
\end{equation}
and
\begin{equation}\label{eq:gram-perturbation-bound}
    \|G_n^m-G_n\|_2 \leq d_{n,m}(2\beta+d_{n,m}).
\end{equation}
\end{lemma}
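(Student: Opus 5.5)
For the right-hand side bound \eqref{eq:rhs-perturbation-bound}, I will work componentwise. By linearity of $L$ and Lemma~\ref{lem:L-continuity}, each component satisfies
\[|L(\phi_k^m)-L(\phi_k)|=|L(\phi_k^m-\phi_k)|\le C_L\|\phi_k^m-\phi_k\|_{LS}\le C_L\varepsilon_{n,m}\]
for $1\le k\le n$, by the definition \eqref{eq:adjoint-uniform-error}. Summing the squares of the $n$ components and taking a square root gives $\|b_n^m-b_n\|_2\le C_L\sqrt n\,\varepsilon_{n,m}$. This is the crude worst-case bound, which matches the statement.

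For the Gram bound \eqref{eq:gram-perturbation-bound}, I will write $G_n^m=(\Phi_n^m)^*\Phi_n^m$ and $G_n=\Phi_n^*\Phi_n$, and set $\Delta:=\Phi_n^m-\Phi_n$. Expanding $(\Phi_n+\Delta)^*(\Phi_n+\Delta)$ gives the identity
\[G_n^m-G_n=\Phi_n^*\Delta+\Delta^*\Phi_n+\Delta^*\Delta.\]
Taking operator norms $\ell^2\to\ell^2$ and using $\|A^*\|=\|A\|$ yields
\[\|G_n^m-G_n\|_2\le 2\|\Phi_n\|\,\|\Delta\|+\|\Delta\|^2.\]
Proposition~\ref{prop:exact-gram-stability} gives $\|\Phi_n\|\le\beta$. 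Proposition~\ref{prop:approximate-gram-stability}, through \eqref{eq:synthesis-operator-perturbation}, gives $\|\Delta\|\le d_{n,m}$. Substituting these produces $d_{n,m}(2\beta+d_{n,m})$.

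An alternative for the Gram bound would be the symmetric bilinear form $c^\top(G_n^m-G_n)c'$, but the factorization above is cleaner. No step is a real obstacle. The only point needing care is that the adjoint identities hold for bounded operators between the finite-dimensional space $\mathbb R^n$ and the Hilbert space $L^2(Q_\omega)$, so that the Gram matrices are exactly the compositions used above. Note that the Gram bound requires no smallness assumption on $d_{n,m}$.
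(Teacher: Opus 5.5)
Your proposal is correct and follows the paper's proof essentially step for step. You bound each component $|L(\phi_k^m-\phi_k)|\le C_L\varepsilon_{n,m}$ and sum the squares, and you expand $G_n^m-G_n=\Phi_n^*D_{n,m}+D_{n,m}^*\Phi_n+D_{n,m}^*D_{n,m}$ using $\|\Phi_n\|\le\beta$ and $\|D_{n,m}\|\le d_{n,m}$, exactly as the paper does.
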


\begin{proof}
By the continuity of $L$,
\begin{align*}
    \|b_n^m-b_n\|_2^2 &=\sum_{k=1}^n \left| L(\phi_k^m-\phi_k) \right|^2 \\
    &\leq C_L^2 \sum_{k=1}^n \|\phi_k^m-\phi_k\|_{LS}^2\\
    &\leq C_L^2n\varepsilon_{n,m}^2.
\end{align*}
Taking square roots gives \eqref{eq:rhs-perturbation-bound}.

To estimate the Gram matrix, set \[D_{n,m}:=\Phi_n^m-\Phi_n.\] By Proposition~\ref{prop:approximate-gram-stability}, \[\|D_{n,m}\|_{\ell^2\to L^2(Q_\omega)}\leq d_{n,m}.\] Since \[G_n=\Phi_n^*\Phi_n,\qquad G_n^m=(\Phi_n^m)^*\Phi_n^m,\] and $\Phi_n^m=\Phi_n+D_{n,m}$, we have
\begin{align*}
    G_n^m-G_n &= (\Phi_n+D_{n,m})^*(\Phi_n+D_{n,m}) - \Phi_n^*\Phi_n \\
    &= \Phi_n^*D_{n,m} + D_{n,m}^*\Phi_n + D_{n,m}^*D_{n,m}.
\end{align*}
Using \[\|\Phi_n\|_{\ell^2\to L^2(Q_\omega)}\leq\beta\] and the submultiplicativity of the operator norm, we obtain
\begin{align*}
    \|G_n^m-G_n\|_2 &\leq 2\|\Phi_n\|\|D_{n,m}\| + \|D_{n,m}\|^2 \\
    &\leq 2\beta d_{n,m}+d_{n,m}^2 \\
    &= d_{n,m}(2\beta+d_{n,m}).
\end{align*}
\end{proof}

\begin{theorem}\label{thm:coupled-reconstruction-error}
Suppose that
\begin{equation} \label{eq:coupled-stability-condition}
    d_{n,m} = C_X\sqrt{n}\,\varepsilon_{n,m} \leq \frac{\alpha}{2}.
\end{equation}
Then $G_n^m$ is symmetric positive definite, the approximate reconstruction $f_n^m$ is well defined, and
\begin{equation}\label{eq:coupled-reconstruction-error}
    \|f_n^m|_{Q_\omega}-f^*\|_{L^2(Q_\omega)} \leq \tau_n + C_{\mathrm{rec}}\sqrt{n}\,\varepsilon_{n,m},
\end{equation}
where $C_{\mathrm{rec}}>0$ is independent of $n$ and $m$ and is defined explicitly in \eqref{eq:reconstruction-constant} below.
\end{theorem}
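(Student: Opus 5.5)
Under \eqref{eq:coupled-stability-condition}, Corollary~\ref{cor:uniform-approximate-gram-conditioning} with $\xi=1/2$ immediately gives that $G_n^m$ is symmetric positive definite with $\|(G_n^m)^{-1}\|_2\le 4/\alpha^2$, so $a_n^m$ and $f_n^m$ are well defined. For the error I will split with the triangle inequality
\[
\|f_n^m|_{Q_\omega}-f^*\|_{L^2(Q_\omega)}\le \|\Phi_n^m a_n^m-\Phi_n a_n\|_{L^2(Q_\omega)}+\|\Phi_n a_n-f^*\|_{L^2(Q_\omega)},
\]
where the second term is exactly $\tau_n$ by Lemma~\ref{lem:exact-reconstruction-projection}. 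It therefore remains to bound the first term by $C_{\mathrm{rec}}\sqrt n\,\varepsilon_{n,m}$.

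I further split $\Phi_n^m a_n^m-\Phi_n a_n=(\Phi_n^m-\Phi_n)a_n^m+\Phi_n(a_n^m-a_n)$. By \eqref{eq:synthesis-operator-perturbation} and \eqref{eq:exact-synthesis-stability}, this is bounded by $d_{n,m}\|a_n^m\|_2+\beta\|a_n^m-a_n\|_2$. Two coefficient bounds are needed. First, $\|a_n\|_2\le\alpha^{-2}\|b_n\|_2$, and $b_n=\Phi_n^*f^*$ gives $\|b_n\|_2\le\beta\|f^*\|_{L^2(Q_\omega)}$, a quantity uniform in $n$. (Better: $\|a_n\|_2\le\alpha^{-1}\|\Phi_na_n\|\le\alpha^{-1}\|f^*\|$ since $\Phi_na_n$ is a projection of $f^*$.) Second, the standard linear-system perturbation identity
\[
a_n^m-a_n=(G_n^m)^{-1}\bigl[(b_n^m-b_n)-(G_n^m-G_n)a_n\bigr]
\]
combined with $\|(G_n^m)^{-1}\|_2\le4/\alpha^2$, \eqref{eq:rhs-perturbation-bound} and \eqref{eq:gram-perturbation-bound} yields
\[
\|a_n^m-a_n\|_2\le\frac4{\alpha^2}\Bigl(C_L\sqrt n\,\varepsilon_{n,m}+d_{n,m}(2\beta+d_{n,m})\frac{\|f^*\|}{\alpha}\Bigr).
\]
Since $d_{n,m}\le\alpha/2$, I replace $2\beta+d_{n,m}$ by $2\beta+\alpha/2$. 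Every term is then linear in $d_{n,m}=C_X\sqrt n\,\varepsilon_{n,m}$ or in $\sqrt n\,\varepsilon_{n,m}$, with constants depending only on $\alpha,\beta,C_X,C_L,\|f^*\|$. Finally $\|a_n^m\|_2\le\|a_n\|_2+\|a_n^m-a_n\|_2$, and the second summand is bounded uniformly using $d_{n,m}\le\alpha/2$ again. Hence $d_{n,m}\|a_n^m\|_2\le C\sqrt n\,\varepsilon_{n,m}$ as well, and no quadratic terms survive.

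Collecting these gives an explicit
\[
C_{\mathrm{rec}}=C_X\,\Xi+\frac{4\beta}{\alpha^2}\Bigl(C_L+C_X(2\beta+\tfrac{\alpha}{2})\frac{\|f^*\|}{\alpha}\Bigr),
\]
where $\Xi$ is the uniform bound for $\|a_n^m\|_2$; this is to be recorded as \eqref{eq:reconstruction-constant}.

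The only delicate point is ensuring $n$-independence. The naive $\|b_n\|_2\le C_L\sqrt n\,\mathfrak B_n$ would spoil it, so I must use the moment identity $b_n=\Phi_n^*f^*$ (or the projection characterization) to bound $a_n$ by $\|f^*\|/\alpha$. A second option is to bound $\Phi_n^m a_n^m$ directly via the normal equations rather than through the coefficients; I would try the coefficient route first, as sketched.
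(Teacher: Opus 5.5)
Your proposal is correct and follows the paper's proof essentially step for step: the inverse bound $\|(G_n^m)^{-1}\|_2\le 4/\alpha^2$ from the corollary with $\xi=1/2$, the projection bound $\|a_n\|_2\le\alpha^{-1}\|f^*\|_{L^2(Q_\omega)}$, the same perturbation identity for $a_n^m-a_n$ (giving $\|a_n^m-a_n\|_2\le C_a\sqrt n\,\varepsilon_{n,m}$ with $C_a=\frac{4}{\alpha^2}\bigl(C_L+\frac{C_X}{\alpha}(2\beta+\frac{\alpha}{2})\|f^*\|_{L^2(Q_\omega)}\bigr)$), and the triangle inequality against $\tau_n$. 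The only deviation is that you split $\Phi_n^m a_n^m-\Phi_n a_n$ as $(\Phi_n^m-\Phi_n)a_n^m+\Phi_n(a_n^m-a_n)$ rather than the paper's $\Phi_n^m(a_n^m-a_n)+(\Phi_n^m-\Phi_n)a_n$, which uses $\|\Phi_n^m\|\le\beta+\alpha/2$ and so needs no separate bound on $\|a_n^m\|_2$; both expand to the same three terms, and with $\Xi=\alpha^{-1}\|f^*\|_{L^2(Q_\omega)}+\frac{\alpha}{2C_X}C_a$ your constant coincides with the paper's $C_{\mathrm{rec}}=(\beta+\frac{\alpha}{2})C_a+\frac{C_X}{\alpha}\|f^*\|_{L^2(Q_\omega)}$.
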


\begin{proof}
We divide the proof into four steps.

\medskip
\noindent
\emph{Step 1: Uniform invertibility of the approximate Gram matrix.}

By Corollary~\ref{cor:uniform-approximate-gram-conditioning} and \eqref{eq:coupled-stability-condition}, \[G_n^m\succeq\frac{\alpha^2}{4}I_n.\] Therefore, $G_n^m$ is symmetric positive definite and
\begin{equation}\label{eq:approximate-gram-inverse-bound-proof}
    \|(G_n^m)^{-1}\|_2\leq \frac{4}{\alpha^2}.
\end{equation}

\medskip
\noindent
\emph{Step 2: Uniform bound for the exact coefficient vector.}

By Proposition~\ref{prop:exact-gram-stability}, \[\alpha\|a_n\|_2\leq\|\Phi_na_n\|_{L^2(Q_\omega)}=\|f_n^H|_{Q_\omega}\|_{L^2(Q_\omega)}.\] Since \[f_n^H|_{Q_\omega}=P_{\mathscr H_{n,\omega}}f^*,\] the contractivity of the orthogonal projection gives \[\|f_n^H|_{Q_\omega}\|_{L^2(Q_\omega)}\leq\|f^*\|_{L^2(Q_\omega)}.\] Consequently,
\begin{equation}\label{eq:exact-coefficient-uniform-bound}
    \|a_n\|_2 \leq \alpha^{-1} \|f^*\|_{L^2(Q_\omega)}.
\end{equation}

\medskip
\noindent
\emph{Step 3: Perturbation of the coefficient vector.}

Subtracting \[G_na_n=b_n\] from \[G_n^ma_n^m=b_n^m\] gives \[G_n^m(a_n^m-a_n)=(b_n^m-b_n)-(G_n^m-G_n)a_n.\] Hence,
\begin{equation*}
    a_n^m-a_n = (G_n^m)^{-1} \left[(b_n^m-b_n) - (G_n^m-G_n)a_n\right].
\end{equation*}
Using \eqref{eq:approximate-gram-inverse-bound-proof}, \eqref{eq:rhs-perturbation-bound}, \eqref{eq:gram-perturbation-bound}, and \eqref{eq:exact-coefficient-uniform-bound}, we obtain
\begin{align*}
    \|a_n^m-a_n\|_2 &\leq \frac{4}{\alpha^2} \bigg[C_L\sqrt{n}\,\varepsilon_{n,m} + d_{n,m}(2\beta+d_{n,m})\alpha^{-1}\|f^*\|_{L^2(Q_\omega)}\bigg].
\end{align*}
Since \[d_{n,m}=C_X\sqrt{n}\,\varepsilon_{n,m}\leq\frac{\alpha}{2},\] we have \[d_{n,m}(2\beta+d_{n,m}) \leq C_X\left(2\beta+\frac{\alpha}{2}\right) \sqrt{n}\,\varepsilon_{n,m}.\]

Define the parameter-independent coefficient constant
\begin{equation*}
    C_a := \frac{4}{\alpha^2} \left[ C_L+ \frac{C_X}{\alpha} \left(2\beta+\frac{\alpha}{2}\right) \|f^*\|_{L^2(Q_\omega)} \right].
\end{equation*}
Then
\begin{equation}\label{eq:coefficient-error-bound}
    \|a_n^m-a_n\|_2\leq C_a\sqrt{n}\,\varepsilon_{n,m}.
\end{equation}

\medskip
\noindent
\emph{Step 4: Perturbation of the reconstructed control.}

Using \[f_n^m|_{Q_\omega}=\Phi_n^ma_n^m,\qquad f_n^H|_{Q_\omega}=\Phi_na_n,\] we decompose
\begin{align*}
    f_n^m|_{Q_\omega}-f_n^H|_{Q_\omega} &= \Phi_n^ma_n^m-\Phi_na_n \\
    &= \Phi_n^m(a_n^m-a_n) + (\Phi_n^m-\Phi_n)a_n.
\end{align*}
By Proposition~\ref{prop:approximate-gram-stability}, \[\|\Phi_n^m\|_{\ell^2\to L^2(Q_\omega)} \leq \beta+d_{n,m} \leq \beta+\frac{\alpha}{2}.\] Using \eqref{eq:coefficient-error-bound}, \eqref{eq:exact-coefficient-uniform-bound}, and \[\|\Phi_n^m-\Phi_n\|_{\ell^2\to L^2(Q_\omega)} \leq d_{n,m},\] we obtain
\begin{align*}
    \|f_n^m|_{Q_\omega}-f_n^H|_{Q_\omega}\|_{L^2(Q_\omega)} &\leq \|\Phi_n^m\| \|a_n^m-a_n\|_2 +   \|\Phi_n^m-\Phi_n\| \|a_n\|_2 \\
    &\leq \left(\beta+\frac{\alpha}{2}\right) C_a\sqrt{n}\,\varepsilon_{n,m} \\
    &\quad + C_X\sqrt{n}\,\varepsilon_{n,m} \alpha^{-1} \|f^*\|_{L^2(Q_\omega)} \\
    &\leq C_{\mathrm{rec}}\sqrt{n}\,\varepsilon_{n,m},
\end{align*}

where
\begin{equation}\label{eq:reconstruction-constant}
    C_{\mathrm{rec}} := \left(\beta+\frac{\alpha}{2}\right)C_a +\frac{C_X}{\alpha}\|f^*\|_{L^2(Q_\omega)}.
\end{equation}
In particular, $C_{\mathrm{rec}}$ is independent of $n$ and $m$.

Finally, the triangle inequality and \eqref{eq:truncation-error-definition} yield
\begin{align*}
    \|f_n^m|_{Q_\omega}-f^*\|_{L^2(Q_\omega)} &\leq \|f_n^m|_{Q_\omega}-f_n^H|_{Q_\omega}\|_{L^2(Q_\omega)} \\
    &\quad + \|f_n^H|_{Q_\omega}-f^*\|_{L^2(Q_\omega)} \\
    &\leq C_{\mathrm{rec}}\sqrt{n}\,\varepsilon_{n,m} + \tau_n.
\end{align*}
This proves \eqref{eq:coupled-reconstruction-error}.
\end{proof}

\begin{corollary}
Suppose that $m=m(n)$ is chosen so that
\begin{equation*}
    \sqrt{n}\, \varepsilon_{n,m(n)} \longrightarrow 0 \qquad \text{as }n\to\infty.
\end{equation*}
Then
\begin{equation*}
    f_n^{m(n)}|_{Q_\omega} \longrightarrow f^* \qquad \text{in }L^2(Q_\omega).
\end{equation*}
\end{corollary}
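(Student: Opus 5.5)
The corollary follows from Theorem~\ref{thm:coupled-reconstruction-error} combined with Lemma~\ref{lem:exact-reconstruction-projection}. The only point requiring care is that the theorem is conditional on the stability hypothesis \eqref{eq:coupled-stability-condition}. The plan is to show that this hypothesis holds eventually, and then pass to the limit in the error estimate \eqref{eq:coupled-reconstruction-error}.

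First, I verify the stability condition for large $n$. By assumption, $\sqrt{n}\,\varepsilon_{n,m(n)}\to0$. Since $C_X$ and $\alpha$ are fixed constants independent of $n$ and $m$, it follows that
\[
d_{n,m(n)}=C_X\sqrt{n}\,\varepsilon_{n,m(n)}\to0 .
\]
Hence there is $n_0$ such that $d_{n,m(n)}\le \alpha/2$ for all $n\ge n_0$. For these $n$, Theorem~\ref{thm:coupled-reconstruction-error} applies. It guarantees that $G_n^{m(n)}$ is symmetric positive definite, so that $f_n^{m(n)}$ is well defined. The finitely many indices $n<n_0$ are irrelevant for the limit, even if the reconstruction is undefined there.

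Second, for $n\ge n_0$ the estimate \eqref{eq:coupled-reconstruction-error} gives
\[
\|f_n^{m(n)}|_{Q_\omega}-f^*\|_{L^2(Q_\omega)}\le \tau_n+C_{\mathrm{rec}}\sqrt{n}\,\varepsilon_{n,m(n)} .
\]
The constant $C_{\mathrm{rec}}$ is independent of $n$ and $m$, so the second term tends to zero by hypothesis. The first term tends to zero by \eqref{eq:truncation-error-convergence} in Lemma~\ref{lem:exact-reconstruction-projection}. Both terms on the right therefore vanish in the limit, which yields the asserted $L^2(Q_\omega)$ convergence.

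The main (mild) obstacle is the bookkeeping in the first step. One has to check that $C_{\mathrm{rec}}$ really is $n$-independent, which it is by its explicit form \eqref{eq:reconstruction-constant}. One also has to make sure the well-definedness of $f_n^{m(n)}$ is asserted only for $n\ge n_0$, since Theorem~\ref{thm:coupled-reconstruction-error} gives nothing below that threshold.
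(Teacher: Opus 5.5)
Your proposal is correct and follows the paper's proof essentially step for step. It uses $\tau_n\to0$ from Lemma~\ref{lem:exact-reconstruction-projection}, the eventual validity of $d_{n,m(n)}=C_X\sqrt n\,\varepsilon_{n,m(n)}\le\alpha/2$, and then the estimate of Theorem~\ref{thm:coupled-reconstruction-error} with its $n$-independent constant $C_{\mathrm{rec}}$; your remark that $f_n^{m(n)}$ need only be defined for $n\ge n_0$ is a harmless clarification the paper leaves implicit.
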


\begin{proof}
By Lemma~\ref{lem:exact-reconstruction-projection}, \[\tau_n\longrightarrow 0.\] Moreover, \[d_{n,m(n)} = C_X\sqrt{n}\,\varepsilon_{n,m(n)} \longrightarrow 0.\] Therefore, \[d_{n,m(n)}\leq\frac{\alpha}{2}\] for all sufficiently large $n$. Applying Theorem~\ref{thm:coupled-reconstruction-error}, we obtain \[\|f_n^{m(n)}|_{Q_\omega}-f^*\|_{L^2(Q_\omega)} \leq \tau_n + C_{\mathrm{rec}}\sqrt{n}\,\varepsilon_{n,m(n)} \longrightarrow 0.\]
\end{proof}

\medskip
\subsection{Fully discrete error estimates and convergence rates}

We now combine the truncation error, the adaptive neural approximation error, and the numerical errors arising from the least-squares and reconstruction systems. The total error has the schematic decomposition 
\[\begin{aligned}
    \text{total error}\leq{}& \text{truncation error}+\text{adjoint approximation error}\\&+\text{assembly and algebraic-solve errors}.
\end{aligned}\]

\subsubsection{Bounds on the discrete adjoint error}

Let \[\widetilde\phi_k^{m,h}\in X\] denote the adjoint approximation returned by the numerical pipeline. Here $m$ is the number of accepted neural enrichment steps and $h$ denotes a deterministic space--time discretization or quadrature parameter; for example, one may take $h=\max\{h_x,\Delta t\}$ for spatial mesh size $h_x$ and time step $\Delta t$.

Let $\phi_k^m$ denote the exact Galerkin projection in the accepted space $Y_{k,m}$. We separate the adaptive Galerkin error from the numerical error by setting
\begin{equation*}
    \chi_{n,m,h} := \max_{1\leq k\leq n} \|\widetilde\phi_k^{m,h}-\phi_k^m\|_{LS}.
\end{equation*}
No membership of $\widetilde\phi_k^{m,h}$ in $Y_{k,m}$ is required here. Thus $\chi_{n,m,h}$ may include quadrature and algebraic-solve errors as well as any postprocessing error, including coefficient pruning. The theoretical quantity $\phi_k^m$ remains the exact Galerkin projection in the unmodified accepted space. Throughout the fully discrete analysis, $E_{n,m,h}^{\mathrm{adj}}$ denotes any number satisfying
\begin{equation}\label{eq:adjoint-error-bound}
    \max_{1\leq k\leq n}\|\widetilde\phi_k^{m,h}-\phi_k\|_{LS}
\leq E_{n,m,h}^{\mathrm{adj}}.
\end{equation}
Theorem~\ref{thm:fully-discrete-neural-hum} uses only this property. In particular, $E_{n,m,h}^{\mathrm{adj}}$ bounds absolute errors relative to the continuous adjoints; the quadrature-based Ritz residuals of Section~\ref{sec:numerics} do not supply such a bound without the consistency and integration estimates specified there. Since $\widetilde\phi_k^{m,h}\in X$, identity \eqref{eq:objective-error-identity} gives \[\|\widetilde\phi_k^{m,h}-\phi_k\|_{LS}=\sqrt{2\mathcal J_k(\widetilde\phi_k^{m,h})}.\] Hence the a posteriori choice $E_{n,m,h}^{\mathrm{adj}}=\max_{1\leq k\leq n}\sqrt{2\mathcal J_k(\widetilde\phi_k^{m,h})}$ is admissible when these functionals are evaluated exactly. It covers every mode, including modes that terminate by stagnation, a restart limit, or a step budget. An a priori choice is also available. If every mode either undergoes $m$ accepted enrichment steps or terminates earlier only after satisfying \eqref{eq:adjoint-stopping-test}, and if the accepted residual directions satisfy \eqref{eq:AD-uniform-quality}, then Corollary~\ref{cor:uniform-adaptive-adjoint-rate} and the triangle inequality show that
\begin{equation}\label{eq:apriori-computed-adjoint-error}
    E_{n,m,h}^{\mathrm{adj}}=\mathfrak B_n\mu_\star^m+\mathrm{Tol}_n+\chi_{n,m,h}
\end{equation}
satisfies \eqref{eq:adjoint-error-bound}. Here $\mathrm{Tol}_n$ may be set to zero if every mode undergoes exactly $m$ accepted enrichment steps.

\subsubsection{Perturbation of the reconstruction system}

Define the fully discrete synthesis operator
\begin{equation*}
    \widetilde\Phi_n^{m,h}c:=\sum_{k=1}^n c_k\widetilde\phi_k^{m,h}|_{Q_\omega},\qquad c\in\mathbb R^n,
\end{equation*}
and its exact-integral Gram matrix and right-hand side by
\begin{equation*}
    \widetilde G_n^{m,h}:=(\widetilde\Phi_n^{m,h})^*\widetilde\Phi_n^{m,h},\qquad\widetilde b_n^{m,h}:=
    \begin{pmatrix}
        L(\widetilde\phi_1^{m,h})\\\vdots\\L(\widetilde\phi_n^{m,h})
    \end{pmatrix}.
\end{equation*}

The quantities assembled numerically are written as
\begin{equation*}
    \widehat G_n^{m,h}=\widetilde G_n^{m,h}+\Delta G_n^{m,h},\qquad\widehat b_n^{m,h}=\widetilde b_n^{m,h}+\Delta b_n^{m,h},
\end{equation*}
where $\widehat G_n^{m,h}$ is assumed to be symmetric. Set
\begin{equation*}
    \delta_{G,n,m,h}:=\|\Delta G_n^{m,h}\|_2,\qquad\delta_{b,n,m,h}:=\|\Delta b_n^{m,h}\|_2.
\end{equation*}
The assembly errors include any quadrature, finite time-expansion, and floating-point errors relative to these exact-integral quantities. Any residual produced by an inexact solution of the reconstruction system may be absorbed into $\Delta b_n^{m,h}$ and hence into $\delta_{b,n,m,h}$. Let $\widehat a_n^{m,h}$ solve
\begin{equation}\label{eq:fully-discrete-reconstruction-system}
    \widehat G_n^{m,h}\widehat a_n^{m,h}=\widehat b_n^{m,h},
\end{equation}
and define the fully discrete control by
\begin{equation*}
    \widehat f_n^{m,h}:=\sum_{k=1}^n(\widehat a_n^{m,h})_k\widetilde\phi_k^{m,h}.
\end{equation*}

\begin{theorem}\label{thm:fully-discrete-neural-hum}
Assume that
\begin{equation}\label{eq:fully-discrete-stability-conditions}
    C_X\sqrt n\,E_{n,m,h}^{\mathrm{adj}}\leq\frac{\alpha}{2},\qquad\delta_{G,n,m,h}\leq\frac{\alpha^2}{8}.
\end{equation}
Then $\widehat G_n^{m,h}$ is symmetric positive definite and
\begin{equation}\label{eq:fully-discrete-error}
    \|\widehat f_n^{m,h}|_{Q_\omega}-f^*\|_{L^2(Q_\omega)}\leq\tau_n+C_{\mathrm{fd}}\left(\sqrt n\,E_{n,m,h}^{\mathrm{adj}}+\delta_{G,n,m,h}+\delta_{b,n,m,h}\right),
\end{equation}
where $C_{\mathrm{fd}}>0$, defined in the proof, is independent of $n$, $m$, and $h$.
\end{theorem}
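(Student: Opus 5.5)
The plan is to repeat the argument of Theorem~\ref{thm:coupled-reconstruction-error}, with two changes. The approximations $\phi_k^m$ are replaced by the pipeline outputs $\widetilde\phi_k^{m,h}$, and $\varepsilon_{n,m}$ is replaced by $E_{n,m,h}^{\mathrm{adj}}$. On top of that, a second perturbation layer handles the assembly errors $\Delta G_n^{m,h}$ and $\Delta b_n^{m,h}$.

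First, the exact-integral objects. Proposition~\ref{prop:approximate-gram-stability} and the preceding lemma used only the bound $\max_k\|\phi_k^m-\phi_k\|_{LS}\le\varepsilon_{n,m}$, together with \eqref{eq:ls-to-observation} and the continuity of $L$. Both arguments therefore transfer verbatim to $\widetilde\phi_k^{m,h}$ via \eqref{eq:adjoint-error-bound}. Setting $\widetilde d:=C_X\sqrt n\,E^{\mathrm{adj}}_{n,m,h}\le\alpha/2$, this gives:
\begin{itemize}
\item $\|\widetilde\Phi_n^{m,h}-\Phi_n\|\le\widetilde d$;
\item $\widetilde G_n^{m,h}\succeq(\alpha^2/4)I_n$ and $\|\widetilde\Phi_n^{m,h}\|\le\beta+\alpha/2$;
\item $\|\widetilde b_n^{m,h}-b_n\|_2\le C_L\sqrt n\,E^{\mathrm{adj}}_{n,m,h}$;
\item $\|\widetilde G_n^{m,h}-G_n\|_2\le\widetilde d(2\beta+\alpha/2)$.
\end{itemize}

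Second, invertibility of the assembled matrix. Since $\widehat G_n^{m,h}$ is symmetric and $\|\Delta G_n^{m,h}\|_2\le\alpha^2/8$, Weyl's inequality gives $\lambda_{\min}(\widehat G_n^{m,h})\ge\alpha^2/4-\alpha^2/8=\alpha^2/8$. Hence $\widehat G_n^{m,h}$ is symmetric positive definite and $\|(\widehat G_n^{m,h})^{-1}\|_2\le 8/\alpha^2$.

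Third, the coefficient error. Subtracting $G_na_n=b_n$ from \eqref{eq:fully-discrete-reconstruction-system} gives
\[
\widehat G_n^{m,h}(\widehat a_n^{m,h}-a_n)=(\widehat b_n^{m,h}-b_n)-(\widehat G_n^{m,h}-G_n)a_n .
\]
On the right, split $\widehat b-b=(\widetilde b-b)+\Delta b$ and $\widehat G-G=(\widetilde G-G)+\Delta G$. Then apply \eqref{eq:exact-coefficient-uniform-bound}, $\|a_n\|_2\le\alpha^{-1}\|f^*\|$. This yields
\[
\|\widehat a_n^{m,h}-a_n\|_2\le\frac{8}{\alpha^2}\Bigl[C_L\sqrt nE^{\mathrm{adj}}+\delta_b+C_X(2\beta+\tfrac{\alpha}{2})\alpha^{-1}\|f^*\|\sqrt nE^{\mathrm{adj}}+\alpha^{-1}\|f^*\|\delta_G\Bigr],
\]
which is at most $C'(\sqrt nE^{\mathrm{adj}}+\delta_G+\delta_b)$ with explicit $C'$.

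Finally, the control error. As in Step~4 of the earlier theorem, decompose
\[
\widehat f_n^{m,h}|_{Q_\omega}-f_n^H|_{Q_\omega}=\widetilde\Phi_n^{m,h}(\widehat a_n^{m,h}-a_n)+(\widetilde\Phi_n^{m,h}-\Phi_n)a_n .
\]
Bound the first term by $(\beta+\alpha/2)C'(\cdots)$ and the second by $C_X\sqrt nE^{\mathrm{adj}}\alpha^{-1}\|f^*\|$. Adding $\tau_n$ from Lemma~\ref{lem:exact-reconstruction-projection} through the triangle inequality gives \eqref{eq:fully-discrete-error}. The constant is $C_{\mathrm{fd}}=(\beta+\alpha/2)C'+C_X\alpha^{-1}\|f^*\|$, which is independent of $n$, $m$ and $h$.

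The only point needing care is the second step. The assembled matrix is not of Gram form, so positivity must come from Weyl's inequality applied to the symmetric perturbation, not from a synthesis-operator lower bound. Everything else is bookkeeping of constants.
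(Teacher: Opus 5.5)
Your proof is correct. Its first two steps coincide with the paper's: you transfer the synthesis and Gram bounds to $\widetilde\phi_k^{m,h}$, then apply Weyl's inequality to $\widehat G_n^{m,h}$. The difference lies in how the coefficient error is organized.

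The paper inserts the exact-integral system $\widetilde G_n^{m,h}\widetilde a_n^{m,h}=\widetilde b_n^{m,h}$ as an intermediate level. It reuses Theorem~\ref{thm:coupled-reconstruction-error}, with $E_{n,m,h}^{\mathrm{adj}}$ in place of $\varepsilon_{n,m}$, to get $\|\widetilde\Phi_n^{m,h}\widetilde a_n^{m,h}-f^*\|_{L^2(Q_\omega)}\le\tau_n+C_{\mathrm{rec}}\sqrt n\,E_{n,m,h}^{\mathrm{adj}}$. It then proves a separate bound $\|\widetilde a_n^{m,h}\|_2\le C_{\mathrm{coef}}$ and treats $\widehat a_n^{m,h}-\widetilde a_n^{m,h}=(\widehat G_n^{m,h})^{-1}[\Delta b_n^{m,h}-\Delta G_n^{m,h}\widetilde a_n^{m,h}]$ as a pure assembly perturbation. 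It ends with $C_{\mathrm{fd}}=\max\{C_{\mathrm{rec}},C_{\mathrm{asm}}\}$.

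You instead compare the assembled system directly with $G_na_n=b_n$ in a single identity, splitting each defect into its adjoint part and its assembly part. This route never needs to solve or bound the exact-integral system. The only coefficient bound you use is $\|a_n\|_2\le\alpha^{-1}\|f^*\|_{L^2(Q_\omega)}$. So the $\delta_{G,n,m,h}$ term carries the factor $\alpha^{-1}\|f^*\|_{L^2(Q_\omega)}$ instead of $C_{\mathrm{coef}}$, which is at least $4\alpha^{-1}\|f^*\|_{L^2(Q_\omega)}$ because $\beta\ge\alpha$, and which also involves $C_L/C_X$.

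The price has two parts. First, your adjoint terms are inverted with $\|(\widehat G_n^{m,h})^{-1}\|_2\le 8/\alpha^2$ rather than $4/\alpha^2$, so your constant in front of $\sqrt n\,E_{n,m,h}^{\mathrm{adj}}$ is larger than $C_{\mathrm{rec}}$. Second, you lose the paper's intermediate estimate for the exact-integral control, which separates adjoint error from assembly error and reuses the earlier theorem as a black box. Since the statement only requires some constant independent of $n$, $m$, $h$, both constants prove it.
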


\begin{proof}
For every $c\in\mathbb R^n$, the definition of $E_{n,m,h}^{\mathrm{adj}}$ gives
\begin{align*}
    \|(\widetilde\Phi_n^{m,h}-\Phi_n)c\|_{L^2(Q_\omega)} &\leq C_X \sum_{k=1}^n |c_k| \|\widetilde\phi_k^{m,h}-\phi_k\|_{LS} \leq C_X\sqrt n\, E_{n,m,h}^{\mathrm{adj}}\|c\|_2.
\end{align*}
Consequently,
\begin{equation*}
    \|\widetilde\Phi_n^{m,h}-\Phi_n\|_{\ell^2\to L^2(Q_\omega)}\leq C_X\sqrt n\,E_{n,m,h}^{\mathrm{adj}} \leq \frac{\alpha}{2}.
\end{equation*}
Using \eqref{eq:exact-synthesis-stability}, we obtain \[\frac{\alpha}{2}\|c\|_2\leq\|\widetilde\Phi_n^{m,h}c\|_{L^2(Q_\omega)}\leq\left(\beta+\frac{\alpha}{2}\right)\|c\|_2.\]
Therefore,
\begin{equation}\label{eq:fully-discrete-exact-integral-spectrum}
    \frac{\alpha^2}{4}I_n\preceq\widetilde G_n^{m,h}\preceq\left(\beta+\frac{\alpha}{2}\right)^2I_n.
\end{equation}
Weyl's inequality and \eqref{eq:fully-discrete-stability-conditions} yield \[\lambda_{\min}(\widehat G_n^{m,h})\geq\lambda_{\min}(\widetilde G_n^{m,h})-\|\Delta G_n^{m,h}\|_2\geq\frac{\alpha^2}{8}.\] Hence $\widehat G_n^{m,h}$ is positive definite and
\begin{equation}\label{eq:fully-discrete-inverse-bound}
    \|(\widehat G_n^{m,h})^{-1}\|_2\leq\frac{8}{\alpha^2}.
\end{equation}

Let $\widetilde a_n^{m,h}$ solve the exact-integral system \[\widetilde G_n^{m,h}\widetilde a_n^{m,h}=\widetilde b_n^{m,h}.\] The proof of Theorem~\ref{thm:coupled-reconstruction-error} uses only the uniform least-squares error of the adjoint modes. Hence, with $E_{n,m,h}^{\mathrm{adj}}$ in place of $\varepsilon_{n,m}$, the same argument gives
\begin{equation}\label{eq:exact-integral-control-bound}
    \|\widetilde\Phi_n^{m,h}\widetilde a_n^{m,h}-f^*\|_{L^2(Q_\omega)}\leq\tau_n+C_{\mathrm{rec}}\sqrt n\,E_{n,m,h}^{\mathrm{adj}}.
\end{equation}
For completeness, the coefficient bound used below can be made explicit. Since $b_n=\Phi_n^*f^*$, \[\|b_n\|_2\leq\beta\|f^*\|_{L^2(Q_\omega)},\qquad\|\widetilde b_n^{m,h}-b_n\|_2\leq C_L\sqrt n\,E_{n,m,h}^{\mathrm{adj}}.\] The stability condition implies $\sqrt n\,E_{n,m,h}^{\mathrm{adj}}\leq\alpha/(2C_X)$, whereas \eqref{eq:fully-discrete-exact-integral-spectrum} gives $\|(\widetilde G_n^{m,h})^{-1}\|_2\leq4/\alpha^2$. Therefore,
\begin{equation}\label{eq:exact-integral-coefficient-bound}
    \|\widetilde a_n^{m,h}\|_2\leq C_{\mathrm{coef}}:=\frac{4}{\alpha^2}\left(\beta\|f^*\|_{L^2(Q_\omega)}+\frac{C_L\alpha}{2C_X}\right).
\end{equation}

Subtracting the exact-integral system from \eqref{eq:fully-discrete-reconstruction-system} gives \[\widehat a_n^{m,h}-\widetilde a_n^{m,h}=(\widehat G_n^{m,h})^{-1}\left[\Delta b_n^{m,h}-\Delta G_n^{m,h}\widetilde a_n^{m,h}\right].\]
It follows from \eqref{eq:fully-discrete-inverse-bound} and \eqref{eq:exact-integral-coefficient-bound} that
\begin{equation*}
    \|\widehat a_n^{m,h}-\widetilde a_n^{m,h}\|_2\leq\frac{8}{\alpha^2}\left(\delta_{b,n,m,h}+C_{\mathrm{coef}}\delta_{G,n,m,h}\right).
\end{equation*}
Moreover, \[\|\widetilde\Phi_n^{m,h}\|_{\ell^2\to L^2(Q_\omega)}\leq\beta+\frac{\alpha}{2}.\] Therefore, \[\|\widetilde\Phi_n^{m,h}(\widehat a_n^{m,h}-\widetilde a_n^{m,h})\|_{L^2(Q_\omega)}\leq C_{\mathrm{asm}}\left(\delta_{G,n,m,h}+\delta_{b,n,m,h}\right),\] where
\begin{equation*}
    C_{\mathrm{asm}}:=\frac{8}{\alpha^2}\left(\beta+\frac{\alpha}{2}\right)\max\{1,C_{\mathrm{coef}}\}.
\end{equation*}
Combining this estimate with \eqref{eq:exact-integral-control-bound} proves \eqref{eq:fully-discrete-error} with
\begin{equation*}
    C_{\mathrm{fd}}:=\max\{C_{\mathrm{rec}},C_{\mathrm{asm}}\}.
\end{equation*}
\end{proof}

If entrywise assembly errors satisfy \[\max_{1\leq i,j\leq n}|(\Delta G_n^{m,h})_{ij}|\leq\eta_{G,n,m,h},\qquad\max_{1\leq i\leq n}|(\Delta b_n^{m,h})_i|\leq\eta_{b,n,m,h},\]
then
\begin{equation}\label{eq:entrywise-quadrature}
    \delta_{G,n,m,h}\leq n\eta_{G,n,m,h},\qquad\delta_{b,n,m,h}\leq\sqrt n\,\eta_{b,n,m,h}.
\end{equation}
Indeed, the first bound follows from $\|A\|_2\leq\|A\|_{\mathrm F}$ and the second from the Euclidean norm.

\subsubsection{Quantitative truncation rate}

From this point onward, we specialize the terminal basis to the eigenfunction basis introduced earlier, namely $w_{2j-1}=(e_j,0)$ and $w_{2j}=(0,\sqrt{\lambda_j}\,e_j)$. Let \[Ae_j=\lambda_je_j,\qquad 0<\lambda_1\leq\lambda_2\leq\cdots,\] where the eigenvalues are repeated according to their multiplicities. Since each spatial eigenvalue generates two terminal modes, define
\begin{equation}\label{eq:fd-repeated-terminal-spectrum}
    \vartheta_{2j-1}=\vartheta_{2j}=\lambda_j,\qquad j\geq1.
\end{equation}
Let \[\mathcal E_n=\operatorname{span}\{w_1,\ldots,w_n\}\] and let $P_n^{\mathcal E}$ denote the $\mathcal E$-orthogonal projection onto $\mathcal E_n$. For $r_\star\geq0$, define
\begin{equation*}
    \mathcal E^{r_\star}:=\left\{ z\in\mathcal E:\sum_{k=1}^{\infty}\vartheta_k^{r_\star}|(z,w_k)_{\mathcal E}|^2<\infty\right\},
\end{equation*}
with norm \[\|z\|_{\mathcal E^{r_\star}}^2:=\sum_{k=1}^{\infty}\vartheta_k^{r_\star}|(z,w_k)_{\mathcal E}|^2.\]
For $s\in\mathbb R$, let $H_A^s$ denote the completion of $\operatorname{span}\{e_j:j\ge1\}$ with respect to the spectral norm \[\left\|\sum_{j\ge1}z_j e_j\right\|_{H_A^s}^2:=\sum_{j\ge1}\lambda_j^s |z_j|^2.\]
With this notation, \[\mathcal E^{r_\star}=H_A^{r_\star}\times H_A^{r_\star-1},\] with equality of the corresponding spectral norms. For $s\ge0$, $H_A^s=D(A^{s/2})$, whereas for $s<0$, $H_A^s$ is the corresponding extrapolation space, equivalently the dual of $H_A^{-s}$ with $L^2(\Omega)$ as pivot space.

\begin{theorem}\label{thm:fd-truncation-rate}
The two-sided observation estimate implies that there exists a unique $z^*\in\mathcal E$ such that \[f^*=Sz^*.\] If $z^*\in\mathcal E^{r_\star}$ for some $r_\star>0$, then
\begin{equation}\label{eq:fd-truncation-rate}
    \tau_n\leq\beta\,\vartheta_{n+1}^{-r_\star/2}\|z^*\|_{\mathcal E^{r_\star}}.
\end{equation}
In particular, for $n=2J$,
\begin{equation*}
    \tau_{2J}\leq\beta\,\lambda_{J+1}^{-r_\star/2}\|z^*\|_{\mathcal E^{r_\star}}.
\end{equation*}
\end{theorem}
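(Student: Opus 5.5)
The argument has two parts: first produce $z^*$ from the closed range of $S$, then bound $\tau_n$ by a competitor in $\mathscr H_{n,\omega}$ built from the spectral truncation of $z^*$.

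\emph{Existence and uniqueness of $z^*$.} By the lower bound in \eqref{eq:observation-alpha-beta}, $S:\mathcal E\to L^2(Q_\omega)$ is injective and bounded below. Hence its range $S(\mathcal E)$ is closed: if $Sz_j\to g$, then $(z_j)$ is Cauchy in $\mathcal E$ because $\|z_j-z_l\|_{\mathcal E}\le\alpha^{-1}\|Sz_j-Sz_l\|$. Its limit $z$ satisfies $Sz=g$ by continuity of $S$. In the proof of Lemma~\ref{lem:exact-reconstruction-projection} it was observed that $\mathscr H_\omega=\overline{S(\mathcal E)}$. Therefore $\mathscr H_\omega=S(\mathcal E)$. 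Since $f^*\in\mathscr H_\omega$ by Proposition~\ref{prop:minimum-norm-exact-control}, there is $z^*\in\mathcal E$ with $f^*=Sz^*$. Uniqueness follows from injectivity of $S$.

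\emph{Truncation bound.} Lemma~\ref{lem:exact-reconstruction-projection} identifies $\tau_n$ with $\operatorname{dist}(f^*,\mathscr H_{n,\omega})$, and $\mathscr H_{n,\omega}=S(\mathcal E_n)$. I use the competitor $SP_n^{\mathcal E}z^*\in\mathscr H_{n,\omega}$. The upper bound in \eqref{eq:observation-alpha-beta} gives
\[
\tau_n\le\|S(z^*-P_n^{\mathcal E}z^*)\|_{L^2(Q_\omega)}\le\beta\|z^*-P_n^{\mathcal E}z^*\|_{\mathcal E}.
\]
By orthonormality, $\|z^*-P_n^{\mathcal E}z^*\|_{\mathcal E}^2=\sum_{k>n}|(z^*,w_k)_{\mathcal E}|^2$. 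The sequence $\vartheta_k$ is nondecreasing by \eqref{eq:fd-repeated-terminal-spectrum}, so $\vartheta_k\ge\vartheta_{n+1}$ for $k>n$. Hence
\[
\sum_{k>n}|(z^*,w_k)_{\mathcal E}|^2\le\vartheta_{n+1}^{-r_\star}\sum_{k>n}\vartheta_k^{r_\star}|(z^*,w_k)_{\mathcal E}|^2\le\vartheta_{n+1}^{-r_\star}\|z^*\|_{\mathcal E^{r_\star}}^2.
\]
Taking square roots gives \eqref{eq:fd-truncation-rate}. For $n=2J$ we have $\vartheta_{2J+1}=\lambda_{J+1}$, which gives the stated special case.

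\emph{Main point needing care.} The only delicate step is identifying $\mathscr H_\omega$ with the closed range $S(\mathcal E)$. This requires the inclusion $S(\mathcal E)\subset\mathscr H_\omega$, which holds by density of $\mathcal V$ in $\mathcal E$ and continuity of $S$ (as already used in Lemma~\ref{lem:exact-reconstruction-projection}). It also requires the closed-range argument above. The remaining steps are routine spectral estimates.
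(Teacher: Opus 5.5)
Your proposal is correct and follows the paper's proof step for step. Injectivity and closed range of $S$ from the lower observability bound give $\mathscr H_\omega=S\mathcal E$ and the unique preimage $z^*$, and the competitor $SP_n^{\mathcal E}z^*$ with the upper bound $\beta$ and the tail estimate ($\vartheta_k\ge\vartheta_{n+1}$ for $k>n$) gives the rate. One small remark: existence of $z^*$ only needs the trivial inclusion $\mathscr H_\omega=\overline{S(\mathcal V)}\subset\overline{S(\mathcal E)}=S(\mathcal E)$, so the density step you flag as delicate is needed only for the full identification $\mathscr H_\omega=S\mathcal E$, not for the theorem itself.
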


\begin{proof}
The lower observation estimate shows that $S$ is injective and has closed range. Since the strong terminal space is dense in $\mathcal E$, it follows
that \[\mathscr H_\omega=S\mathcal E.\] Thus the minimal control admits a unique representation $f^*=Sz^*$.

The best-approximation property of $f_n^H|_{Q_\omega}$ gives
\begin{align*}
    \tau_n&=\operatorname{dist}_{L^2(Q_\omega)}(f^*,\mathscr H_{n,\omega})\\
    &\leq\|f^*-SP_n^{\mathcal E}z^*\|_{L^2(Q_\omega)}\\
    &=\|S(I-P_n^{\mathcal E})z^*\|_{L^2(Q_\omega)}\\
    &\leq\beta\|(I-P_n^{\mathcal E})z^*\|_{\mathcal E}.
\end{align*}
Furthermore,
\begin{align*}
    \|(I-P_n^{\mathcal E})z^*\|_{\mathcal E}^2&=\sum_{k>n}|(z^*,w_k)_{\mathcal E}|^2\\
    &\leq\vartheta_{n+1}^{-r_\star}\sum_{k>n}\vartheta_k^{r_\star}|(z^*,w_k)_{\mathcal E}|^2\\
    &\leq\vartheta_{n+1}^{-r_\star}\|z^*\|_{\mathcal E^{r_\star}}^2.
\end{align*}
Taking square roots proves \eqref{eq:fd-truncation-rate}. The paired estimate follows from $\vartheta_{2J+1}=\lambda_{J+1}$.
\end{proof}

\begin{remark}
The membership $z^*\in\mathcal E^{r_\star}$ is an additional source condition on the HUM preimage of the minimum-norm control. Smoothness of the initial and target data alone does not automatically imply it without corresponding regularity information for the inverse observation operator.
\end{remark}

\subsubsection{Coupled convergence rate}

To express the dependence of the neural approximation error on the terminal mode, assume that, for some constants $C_\phi>0$ and $\gamma\geq0$,
\begin{equation}\label{eq:adjoint-LS-growth}
    \mathfrak B_n\leq C_\phi\vartheta_n^\gamma.
\end{equation}

\begin{corollary}
Assume the source condition in Theorem~\ref{thm:fd-truncation-rate}, the adjoint growth condition \eqref{eq:adjoint-LS-growth}, the uniform neural approximation condition \eqref{eq:AD-uniform-quality}, and that every mode either undergoes $m$ accepted enrichment steps or terminates earlier only after satisfying \eqref{eq:adjoint-stopping-test}. Suppose also that

\begin{equation*}
    C_X\sqrt n\left(C_\phi\vartheta_n^\gamma\mu_\star^m+\mathrm{Tol}_n+\chi_{n,m,h}\right)\leq\frac{\alpha}{2},\qquad\delta_{G,n,m,h}\leq\frac{\alpha^2}{8}.
\end{equation*}

Then
\begin{align*}
    \|\widehat f_n^{m,h}|_{Q_\omega}-f^*\|_{L^2(Q_\omega)}\leq{}& C_{\mathrm{tr}} \vartheta_{n+1}^{-r_\star/2}\\
    &+C_{\mathrm{fd}}\left({\sqrt n\,C_\phi\vartheta_n^\gamma\mu_\star^m+\sqrt n\,\mathrm{Tol}_n}+\sqrt n\,\chi_{n,m,h}+\delta_{G,n,m,h}+\delta_{b,n,m,h}\right),
\end{align*}
where \[C_{\mathrm{tr}}:=\beta\|z^*\|_{\mathcal E^{r_\star}}.\]
\end{corollary}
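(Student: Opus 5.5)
The corollary is a direct combination of Theorem~\ref{thm:fully-discrete-neural-hum} and Theorem~\ref{thm:fd-truncation-rate}. The only work is to exhibit an admissible adjoint-error bound $E_{n,m,h}^{\mathrm{adj}}$ that makes the stated hypothesis coincide with \eqref{eq:fully-discrete-stability-conditions}.

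First I would construct the a priori adjoint-error bound. Under \eqref{eq:AD-uniform-quality} and the termination assumption, Corollary~\ref{cor:uniform-adaptive-adjoint-rate} gives $\max_{k\le n}\|\phi_k-\phi_k^m\|_{LS}\le\mathfrak B_n\mu_\star^m+\mathrm{Tol}_n$. The definition of $\chi_{n,m,h}$ and the triangle inequality then show that \eqref{eq:apriori-computed-adjoint-error} satisfies \eqref{eq:adjoint-error-bound}. Since $\mathfrak B_n\le C_\phi\vartheta_n^\gamma$ by \eqref{eq:adjoint-LS-growth}, and \eqref{eq:adjoint-error-bound} only asks for an upper bound, the larger number
\[E_{n,m,h}^{\mathrm{adj}}:=C_\phi\vartheta_n^\gamma\mu_\star^m+\mathrm{Tol}_n+\chi_{n,m,h}\]
is also admissible.

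Next I would check stability. With this choice, the assumed inequality $C_X\sqrt n\,E_{n,m,h}^{\mathrm{adj}}\le\alpha/2$ together with $\delta_{G,n,m,h}\le\alpha^2/8$ is exactly \eqref{eq:fully-discrete-stability-conditions}. Theorem~\ref{thm:fully-discrete-neural-hum} therefore applies and yields \eqref{eq:fully-discrete-error}. Its constant $C_{\mathrm{fd}}$ does not depend on $n$, $m$, $h$, or on which admissible $E^{\mathrm{adj}}$ is used, since it is built from $\alpha,\beta,C_X,C_L,\|f^*\|$ only. Expanding $\sqrt n\,E_{n,m,h}^{\mathrm{adj}}$ gives the three terms $\sqrt n\,C_\phi\vartheta_n^\gamma\mu_\star^m$, $\sqrt n\,\mathrm{Tol}_n$ and $\sqrt n\,\chi_{n,m,h}$ appearing in the claim.

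Finally, I would bound $\tau_n$ by \eqref{eq:fd-truncation-rate}, $\tau_n\le\beta\vartheta_{n+1}^{-r_\star/2}\|z^*\|_{\mathcal E^{r_\star}}=C_{\mathrm{tr}}\vartheta_{n+1}^{-r_\star/2}$, using the source condition $z^*\in\mathcal E^{r_\star}$.

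The only point needing care is the first step: confirming that replacing $\mathfrak B_n$ by its upper bound keeps $E^{\mathrm{adj}}$ admissible, and that $C_{\mathrm{fd}}$ is uniform in that choice. Both follow from inspecting the proof of Theorem~\ref{thm:fully-discrete-neural-hum}, so I do not expect a genuine obstacle.
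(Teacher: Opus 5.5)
Your proposal is correct and follows the paper's proof: you obtain the a priori adjoint bound from Corollary~\ref{cor:uniform-adaptive-adjoint-rate}, the triangle inequality with $\chi_{n,m,h}$, and the growth condition \eqref{eq:adjoint-LS-growth}, and then substitute it and \eqref{eq:fd-truncation-rate} into Theorem~\ref{thm:fully-discrete-neural-hum}. The only cosmetic difference is that the paper keeps $E_{n,m,h}^{\mathrm{adj}}$ equal to \eqref{eq:apriori-computed-adjoint-error} and bounds it above, whereas you take the larger number itself as $E_{n,m,h}^{\mathrm{adj}}$, so the stated hypothesis coincides exactly with \eqref{eq:fully-discrete-stability-conditions}.
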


\begin{proof}
With the a priori choice \eqref{eq:apriori-computed-adjoint-error} and \eqref{eq:adjoint-LS-growth}, \[E_{n,m,h}^{\mathrm{adj}}\leq C_\phi\vartheta_n^\gamma\mu_\star^m+\mathrm{Tol}_n+\chi_{n,m,h}.\] The conclusion follows by substituting this estimate and \eqref{eq:fd-truncation-rate} into Theorem~\ref{thm:fully-discrete-neural-hum}.
\end{proof}

Let $\nu_j$ denote the Dirichlet eigenvalues of $-\Delta$ on $\Omega$. The min--max principle and $a_0\|\nabla v\|_{L^2}^2\leq \int_\Omega a|\nabla v|^2\leq a_1\|\nabla v\|_{L^2}^2$ give \[a_0\nu_j\leq\lambda_j\leq a_1\nu_j.\] Weyl's law for the Dirichlet Laplacian on a smooth bounded domain \cite{weyl1912asymptotic} therefore gives \[\lambda_j\asymp j^{2/d}.\] Since every spatial eigenvalue occurs twice in \eqref{eq:fd-repeated-terminal-spectrum}, we also have \[\vartheta_n\asymp n^{2/d}.\] Consequently,
\begin{equation}\label{eq:Weyl-fully-discrete-rate}
    \|\widehat f_n^{m,h}|_{Q_\omega}-f^*\|_{L^2(Q_\omega)}\lesssim n^{-r_\star/d}+{n^{1/2+2\gamma/d}\mu_\star^m+\sqrt n\,\mathrm{Tol}_n}+\sqrt n\,\chi_{n,m,h}+\delta_{G,n,m,h}+\delta_{b,n,m,h}.
\end{equation}

The neural contribution is of the same order as the truncation contribution provided that
\begin{equation}\label{eq:logarithmic-enrichment-choice}
    m(n)\geq\frac{\frac12+\frac{r_\star+2\gamma}{d}}{{|\log\mu_\star|}}\log n+m_0
\end{equation}
for a sufficiently large offset $m_0$. Thus only a logarithmic number of accepted adaptive enrichment steps per mode is required under the uniform contraction hypothesis. For the stopping contribution to have the same order, one also requires $\sqrt n\,\mathrm{Tol}_n=O(n^{-r_\star/d})$. In particular, a fixed nonzero relative stopping threshold does not by itself meet this asymptotic requirement; its relative factor must be decreased with $n$ unless an independent mode-uniform estimate yields the required absolute decay. This statement concerns accepted enrichments and does not, by itself, bound the network width or the nonlinear optimization cost of producing each accepted direction.

If, in addition,
\begin{equation*}
    \sqrt n\,\mathrm{Tol}_n+\sqrt n\,\chi_{n,m(n),h(n)}+\delta_{G,n,m(n),h(n)}+\delta_{b,n,m(n),h(n)}=O(n^{-r_\star/d}),
\end{equation*}
then
\begin{equation*}
    \|\widehat f_n^{m(n),h(n)}|_{Q_\omega}-f^*\|_{L^2(Q_\omega)}=O(n^{-r_\star/d}).
\end{equation*}

For the terminal eigenbasis used above, the value $\gamma=1$ follows under the stated smoothness and elliptic-regularity assumptions. Indeed, $\mathcal P\phi_k=0$ and elliptic regularity gives \[\|\phi_{2j-1}\|_{LS}^2=\|e_j\|_{H^2(\Omega)}^2+\|e_j\|_{H^1(\Omega)}^2\lesssim\lambda_j^2,\] whereas the eigenvalue identity and ellipticity give \[\|\phi_{2j}\|_{LS}^2=\|\sqrt{\lambda_j}e_j\|_{H^1(\Omega)}^2\lesssim\lambda_j^2.\] Thus $\|\phi_k\|_{LS}\lesssim\vartheta_k$, and one may take $\gamma=1$ in \eqref{eq:adjoint-LS-growth}. In this case, \eqref{eq:logarithmic-enrichment-choice} becomes
\begin{equation*}
    m(n)\geq \frac{\frac12+\frac{r_\star+2}{d}}{{|\log\mu_\star|}}\log n+m_0.
\end{equation*}

\begin{remark}
Suppose that, for some $s,\nu>0$, \[\chi_{n,m,h}\leq C_\chi h^\nu,\qquad\eta_{G,n,m,h}\leq C_Gh^s,\qquad\eta_{b,n,m,h}\leq C_bh^s,\]
with constants uniform over the functions used in the construction. Then \eqref{eq:entrywise-quadrature} gives \[\delta_{G,n,m,h}\leq C_Gnh^s,\qquad \delta_{b,n,m,h}\leq C_b\sqrt n\,h^s.\] Consequently, \eqref{eq:Weyl-fully-discrete-rate} becomes \[\|\widehat f_n^{m,h}|_{Q_\omega}-f^*\|_{L^2(Q_\omega)}\lesssim n^{-r_\star/d}+n^{1/2+2\gamma/d}\mu_\star^m+\sqrt n\,\mathrm{Tol}_n+\sqrt n\,h^\nu+nh^s.\] These estimates require uniform quadrature regularity bounds; pointwise convergence for each fixed integrand is not sufficient when $n$ increases.
\end{remark}

\subsubsection{Propagation of the control error to the state}

 \begin{lemma}\label{lem:control-to-state-stability}
Let $f,f_n\in L^2(Q_\omega)$, and let $u,u_n$ be the solutions of \eqref{Wave1} corresponding to $f,f_n$, respectively, with the same initial data. Then \[\|u-u_n\|_{L^2(Q)} \le \frac{C_pT}{\sqrt{2a_0}} \|f-f_n\|_{L^2(Q_\omega)}, \] and \[ \left( \|(u-u_n)_t(T)\|_{L^2(\Omega)}^2+\int_\Omega a(x)|\nabla(u-u_n)(T,x)|^2\,dx \right)^{1/2} \leq\sqrt T\,\|f-f_n\|_{L^2(Q_\omega)},\] where $C_p$ is the Poincar\'e constant of $\Omega$.
\end{lemma}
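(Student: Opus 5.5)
The plan is to work with the difference $w:=u-u_n$. By linearity of \eqref{Wave1}, $w$ solves
\[
w_{tt}-\nabla\cdot(a\nabla w)=-(f-f_n)\mathbf 1_\omega \quad\text{in } Q,
\]
with homogeneous Dirichlet boundary data and zero initial data $w(0)=0$, $w_t(0)=0$. Set $g:=-(f-f_n)\mathbf 1_\omega$, so that $\|g\|_{L^2(Q)}=\|f-f_n\|_{L^2(Q_\omega)}$.

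We then argue exactly as in the proof of Lemma~\ref{lem:ls-embedding}, but forward in time from $t=0$ rather than backward from $t=T$. First we prove the estimate for smooth data by density: approximate $g$ by smooth compactly supported forcing and use the continuity of the solution map $L^2(Q)\to C([0,T];H_0^1)\cap C^1([0,T];L^2)$. For such data we define
\[
E(t)=\tfrac12\|w_t(t)\|_{L^2}^2+\tfrac12\int_\Omega a|\nabla w(t)|^2\,dx .
\]
As in Lemma~\ref{lem:ls-embedding}, $E'(t)=\int_\Omega g\,w_t\,dx$. Since $E(0)=0$, the $\delta$-regularized square-root argument gives
\[
\sqrt{E(t)}\le \tfrac1{\sqrt2}\int_0^t\|g(s)\|_{L^2}\,ds\le \tfrac{\sqrt t}{\sqrt2}\,\|g\|_{L^2(Q_t)}.
\]
At $t=T$ this yields
\[
\bigl(2E(T)\bigr)^{1/2}\le \sqrt T\,\|f-f_n\|_{L^2(Q_\omega)},
\]
which is exactly the second claimed inequality, since $2E(T)$ equals the bracketed energy in the statement.

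For the $L^2(Q)$ bound, Poincar\'e's inequality and ellipticity give
\[
\|w(t)\|_{L^2}\le C_p\|\nabla w(t)\|_{L^2}\le C_p\sqrt{2E(t)/a_0}\le \frac{C_p}{\sqrt{a_0}}\sqrt t\,\|g\|_{L^2(Q)} .
\]
Squaring and integrating over $(0,T)$ gives
\[
\|w\|_{L^2(Q)}^2\le \frac{C_p^2}{a_0}\|g\|^2\int_0^T t\,dt=\frac{C_p^2T^2}{2a_0}\|g\|^2 ,
\]
which is the first inequality with the constant $C_pT/\sqrt{2a_0}$. Keeping the time-dependent factor $\sqrt t$ rather than using $\sup_t$ is what produces the $1/\sqrt2$ in the constant.

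The only delicate step is justifying the energy identity for merely $L^2$ forcing. I would handle it by the density/continuity argument above, the same way the paper passes from $X_0$ to $X$. The rest is routine.
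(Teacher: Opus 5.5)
Your proposal is correct and follows the paper's proof essentially step for step. Both arguments run the forward energy estimate for $w=u-u_n$ with $E(0)=0$, use the $\delta$-regularized square-root argument to get $\sqrt{E(t)}\le\sqrt{t/2}\,\|f-f_n\|_{L^2(Q_\omega)}$, and then integrate Poincar\'e and ellipticity against the factor $t$ to obtain the constant $C_pT/\sqrt{2a_0}$, with the same density justification for energy solutions.
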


\begin{proof}
Set $w=u-u_n$ and $g=f-f_n$. Then
\begin{equation}\label{erroreqwave}
\begin{cases}
    w_{tt}-\nabla\cdot(a(x)\nabla w)+g\,\mathbf 1_\omega=0 &\text{in }Q,\\
    w=0&\text{on }(0,T)\times\partial\Omega,\\
	w(0)=w_t(0)=0&\text{in }\Omega.
\end{cases}
\end{equation}
Define \[E(t)=\frac12\|w_t(t)\|_{L^2(\Omega)}^2+\frac12\int_\Omega a(x)|\nabla w(t,x)|^2\,dx.\] Multiplication of \eqref{erroreqwave} by $w_t$ and integration by parts give \[E'(t)=-\int_\omega g(t,x)w_t(t,x)\,dx.\] Consequently, \[|E'(t)|\le \|g(t)\|_{L^2(\omega)}\|w_t(t)\|_{L^2(\Omega)} \le \sqrt2\,\|g(t)\|_{L^2(\omega)}\sqrt{E(t)}.\] Applying this estimate to $\sqrt{E(t)+\delta}$ and then letting $\delta\downarrow0$ avoids division by zero and yields, because $E(0)=0$,
\begin{equation}\label{eestimatewave1}
    \sqrt{E(t)}\le\frac1{\sqrt2}\int_0^t\|g(s)\|_{L^2(\omega)}\,ds\le\sqrt{\frac t2}\,\|g\|_{L^2(Q_\omega)}.
\end{equation}
Ellipticity and Poincar\'e's inequality now imply \[\|w(t)\|_{L^2(\Omega)}\le C_p\|\nabla w(t)\|_{L^2(\Omega)}\le\frac{C_p\sqrt t}{\sqrt{a_0}}\|g\|_{L^2(Q_\omega)}.\] Squaring and integrating over $(0,T)$ gives \[\|w\|_{L^2(Q)}^2\le\frac{C_p^2T^2}{2a_0}\|g\|_{L^2(Q_\omega)}^2,\] which proves the first estimate. Moreover, \eqref{eestimatewave1} yield \[\left(\|w_t(T)\|_{L^2(\Omega)}^2+\int_\Omega a(x)|\nabla w(T,x)|^2\,dx\right)^{1/2}\leq\sqrt{2E(T)}\leq\sqrt T\,\|g\|_{L^2(Q_\omega)}.\] This proves the second estimate. The calculation is justified for energy solutions by the usual density argument.
\end{proof}

Combining Theorem~\ref{thm:fully-discrete-neural-hum} with Lemma~\ref{lem:control-to-state-stability} shows that every control-error estimate obtained above immediately yields the same convergence order for the associated state in $L^2(Q)$ and for its terminal energy.

\medskip
\noindent\textbf{Resulting numerical procedure.}

For a prescribed truncation dimension $n$, the method proceeds as follows.
\begin{enumerate}
    \item Construct the first $n$ energy-orthonormal terminal pairs.
    \item For each mode $k$, initialize $Y_{k,0}=\{0\}$ and $\phi_k^0=0$.
	\item At enrichment step $m$, choose hidden features \eqref{eq:hidden-neural-features}. During the Adam phase, solve the output-layer Galerkin system \eqref{eq:local-output-system} after each hidden-parameter update. A subsequent joint refinement of the output and hidden parameters may be used, with the returned candidate tested directly by \eqref{eq:discrete-residual-correlation}.
	\item Test the resulting direction on independent training and validation quadratures and apply the correlation and rank gates \eqref{eq:discrete-direction-gates}. If a gate fails, increase the width, number of restarts, or training budget and repeat the local solve.
	\item After accepting one direction, enlarge $Y_{k,m}$ and recompute the global Galerkin projection \eqref{eq:adjoint-projection}. Continue until \eqref{eq:adjoint-stopping-test} is met.
	\item When certified estimates of the constants and errors are available, refine the accepted-enrichment count and deterministic discretization until \[C_X\sqrt n\,E_{n,m,h}^{\mathrm{adj}}\le\frac{\alpha}{2},\qquad \delta_{G,n,m,h}\le\frac{\alpha^2}{8}.\]
	\item Solve the reconstruction system by a stable factorization and verify the resulting control with an independent forward solve.
\end{enumerate}
Step 6 is an analytical certification target. If the constants or exact assembly errors are unavailable, numerical sensitivity checks cannot be substituted for the two displayed inequalities and should be reported as diagnostics only. In a certification-driven refinement strategy, the dimension $n$ would be increased only after the estimated contributions $\sqrt n\,E_{n,m,h}^{\mathrm{adj}}$, $\delta_{G,n,m,h}$, and $\delta_{b,n,m,h}$ in \eqref{eq:fully-discrete-error} are below the chosen tolerance. The experiments in Section~\ref{sec:numerics} instead train the first 32 modes and evaluate fixed prefixes $n\in\{4,8,16,32\}$; they do not implement Step 6 or a certified adaptive choice of $n$.
			
\section{Numerical experiments}\label{sec:numerics}
\subsection{Implementation and interpretation of the errors}

The experiments use Ritz terminal pairs $w_{2j-1,R}=(e_{j,R},0)$ and $w_{2j,R}=(0,\sqrt{\lambda_{j,R}}e_{j,R})$. Here $(\lambda_{j,R},e_{j,R})$ is obtained from a spectral Rayleigh--Ritz approximation of $A=-\nabla\cdot(a\nabla)$ in each example~\cite{babuska1991eigenvalue}. We suppress the subscript $R$ on eigenpairs in the example descriptions and figure labels. The associated modal reference fields, denoted $\phi_{k,R}$, use the Ritz eigenvalue for their temporal oscillation. They are reference-model fields rather than exact solutions of the continuous adjoint equation.  A fixed spectral cutoff defines a finite reference model and its minimum-norm HUM control $f_{\mathrm{ref}}$.  Within that same finite model, $f_{n,\mathrm{ref}}^H$ denotes the control reconstructed from the first $n$ reference adjoint modes, and $f_n^m$ uses their trained neural Galerkin approximations. We write
\begin{equation*}
    \tau_n^{\mathrm{ref}}=\|f_{n,\mathrm{ref}}^H-f_{\mathrm{ref}}\|_{L^2(Q_\omega)}, \qquad \eta_n=\|f_n^m-f_{n,\mathrm{ref}}^H\|_{L^2(Q_\omega)}.
\end{equation*}
The symbol $\tau_n^{\mathrm{ref}}$ is a measured finite-model truncation error; the $\tau_n$ of Theorem~\ref{thm:fully-discrete-neural-hum} measures the distance to the exact PDE control $f^*$.  We do not identify these two quantities. All norms of controls in the tables are over $Q_\omega$.

We also report the terminal energy residual
\begin{equation*}
    R_T(f)=\left(\|a^{1/2}\nabla(u_f(T)-u_T^0)\|_{L^2(\Omega)}^2+ \|(u_f)_t(T)-u_T^1\|_{L^2(\Omega)}^2 \right)^{1/2}.
\end{equation*}
Unless explicitly called independent, $R_T$ is evaluated by a modal forward solve in the indicated Ritz reference model.

During training, the accepted directions are combined by the global discrete Galerkin solve. A candidate enrichment has the form \[v_\theta(t,x)=\rho(x)\sum_{\ell=1}^{p} c_\ell\sin(\alpha_\ell t+W_\ell\cdot x+b_\ell),\] where $\rho$ vanishes on the boundary. The three boundary factors are $P(x,y)/P(0,0)$ in Example~1, $\sin x\sin y$ in Example~2, and $(\pi^2-|x|^2)/\pi^2$ in Example~3. During Adam~\cite{kingma2015adam}, the output vector $c$ is recomputed after every hidden-parameter update using the spectrally truncated local Galerkin pseudoinverse described in Section~\ref{sec:method}. The global Galerkin system on the accepted directions is solved by a dense linear solve. The subsequent Levenberg--Marquardt phase \cite{more1978levenberg} jointly optimizes $(c,\alpha,W,b)$; hence its final output is not assumed to solve the local Galerkin system exactly and is accepted or rejected using \eqref{eq:discrete-direction-gates}. At most six directions are accepted for each adjoint mode.  The width starts near $\max\{200,40\sqrt{\lambda_j}\}$ and is capped at 600; initial temporal frequencies concentrate near $\pm\sqrt{\lambda_j}$. Training and validation quadratures are separate. Further implementation details and the example-dependent settings are collected in Appendix~\ref{app:numerical-diagnostics} (Table~\ref{tab:appendix-configurations}). In every run, $\varrho_{\mathrm{acc}}=0.1$ and $\tau_{\mathrm{rank}}=0.2$. The minima over all accepted directions were
\[\begin{array}{c|ccc}
    &\min\widehat\varrho^{\mathrm{tr}}
    &\min\widehat\varrho^{\mathrm{val}}
    &\min\|(I-\Pi^{\mathrm{tr}})\psi^{NN,\mathrm{tr}}\|_{\mathrm{LS},\mathrm{tr}}\\
    \hline
    \text{Example 1}&0.112016&0.112407&0.975515\\
    \text{Example 2}&0.307207&0.307008&0.986307\\
    \text{Example 3}&0.513899&0.463349&0.986252
\end{array}\]
Thus every reported accepted direction passed both correlation gates and the rank gate on the quadratures used by the implementation. The residual vectors use the Ritz terminal targets in \eqref{eq:ritz-ls-functional}. These are discrete diagnostics and do not establish the exact uniform condition \eqref{eq:AD-uniform-quality}.

The homogeneous least-squares bilinear form used by the code is based on the graph-norm variant
\begin{equation}\label{eq:numerical-graph-norm}
 \|v\|_{\mathrm{LS},A}^2=\|\mathcal Pv\|_{L^2(Q)}^2+2\|v(T)\|_{H^1(\Omega)}^2+\|Av(T)\|_{L^2(\Omega)}^2+\|v_t(T)\|_{H^1(\Omega)}^2.
\end{equation}
On a smooth domain, elliptic regularity makes this norm equivalent to the analysis norm. For the square, the smooth even extension of the coefficient and odd reflection of the Dirichlet solution justify the regularity used in the spectral computation. Norm equivalence changes constants, but it does not remove inconsistencies in the terminal operator data.

For a displacement mode, the code applies the differential operator to the neural field but uses the Ritz eigenvalue relation for the terminal target row. In the sign convention of $A=-\nabla\cdot(a\nabla)$, define \[z_{2j-1,R}=\lambda_{j,R}e_{j,R},\qquad z_{2j,R}=0.\]
The functional underlying the numerical residual is
\begin{equation}\label{eq:ritz-ls-functional}
\begin{aligned}
    \mathcal J_{k,R}(v)={}&\tfrac12\|\mathcal Pv\|_{L^2(Q)}^2+\|v(T)-w_{k,R}^0\|_{H^1(\Omega)}^2\\&+\tfrac12\|Av(T)-z_{k,R}\|_{L^2(\Omega)}^2+\tfrac12\|v_t(T)-w_{k,R}^1\|_{H^1(\Omega)}^2.
\end{aligned}
\end{equation}
In the code, the operator row is written with $\nabla\cdot(a\nabla)$ and target $-\lambda_{j,R}e_{j,R}$, which gives the same squared residual. The relative indicator reported in the numerical tables and figures is
\begin{equation}\label{eq:ritz-relative-indicator}
 e_k^{\mathrm{rel}}:=\frac{\sqrt{2\widehat{\mathcal J}_{k,R}^{\,\mathrm{err}}(\widetilde\phi_k^{m,h})}}{\widehat N_{k,R}^{\mathrm{err}}},\qquad\widehat N_{k,R}^{q}:=\sqrt{2\widehat{\mathcal J}_{k,R}^{q}(0)}.
\end{equation}
Here $q$ identifies the training, validation, or error-assessment quadrature. Thus $e_k^{\mathrm{rel}}$ is a normalized Ritz-based least-squares residual, including the normalization used in the stored results; it is not asserted to equal a relative continuous-adjoint error. The validation stopping test is \[\sqrt{2\widehat{\mathcal J}_{k,R}^{\,\mathrm{val}}(\widetilde\phi_k^{m,h})}\leq\epsilon_k\widehat N_{k,R}^{\mathrm{val}},\] subject to the six-step budget and the stagnation criterion. A mode that meets this test has met the implemented residual target; the normalization in that test may differ slightly from the error-assessment normalization. The absolute error $E_{n,m,h}^{\mathrm{adj}}$ in Theorem~\ref{thm:fully-discrete-neural-hum} is a different quantity.

To make this distinction precise, let $\phi_{k,R}^{\mathrm{PDE}}$ be the exact continuous adjoint with terminal data $w_{k,R}$. Define the operator-data consistency defect
\[
 D_{k,R}:=\|Aw_{k,R}^0-z_{k,R}\|_{L^2(\Omega)}
 =\begin{cases}
 \|Ae_{j,R}-\lambda_{j,R}e_{j,R}\|_{L^2(\Omega)},&k=2j-1,\\
 0,&k=2j.
 \end{cases}
\]
Since $\mathcal P\phi_{k,R}^{\mathrm{PDE}}=0$, the graph-norm error satisfies
\[
\begin{aligned}
 \|v-\phi_{k,R}^{\mathrm{PDE}}\|_{\mathrm{LS},A}^2
 ={}&\|\mathcal Pv\|_{L^2(Q)}^2+2\|v(T)-w_{k,R}^0\|_{H^1(\Omega)}^2\\
 &+\|Av(T)-Aw_{k,R}^0\|_{L^2(\Omega)}^2
 +\|v_t(T)-w_{k,R}^1\|_{H^1(\Omega)}^2.
\end{aligned}
\]
Compared with $2\mathcal J_{k,R}(v)$ in \eqref{eq:ritz-ls-functional}, only the terminal operator target differs. The norm of the difference between the two residual vectors is $D_{k,R}$, independently of $v$. The reverse triangle inequality therefore gives, under exact integration,
\begin{equation}\label{eq:ritz-consistency-comparison}
 \left|\|v-\phi_{k,R}^{\mathrm{PDE}}\|_{\mathrm{LS},A}
       -\sqrt{2\mathcal J_{k,R}(v)}\right|\leq D_{k,R}.
\end{equation}
Thus $\|v-\phi_{k,R}^{\mathrm{PDE}}\|_{\mathrm{LS},A}\leq\sqrt{2\mathcal J_{k,R}(v)}+D_{k,R}$, with equality of the two norms when $D_{k,R}=0$. Norm equivalence alone does not remove $D_{k,R}$. Moreover, the modal reference field $\phi_{k,R}(t,x)=s_{k,R}(t)e_{j,R}(x)$ obeys
\[
 \mathcal P\phi_{k,R}=s_{k,R}(t)(Ae_{j,R}-\lambda_{j,R}e_{j,R}),
\]
which is generally nonzero: the weak Ritz eigenvalue relation does not imply a zero strong residual.

A continuous-error bound can be obtained only after accounting for the additional errors. Suppose that $\delta J_{k,R}^q$ bounds $|\mathcal J_{k,R}(v)-\widehat{\mathcal J}_{k,R}^{q}(v)|$, that $\|\cdot\|_{LS}\leq C_{\mathrm{eq}}\|\cdot\|_{\mathrm{LS},A}$, and that $B_{k,R}$ bounds $\|\phi_{k,R}^{\mathrm{PDE}}-\phi_k\|_{LS}$. Then \eqref{eq:ritz-consistency-comparison} and the triangle inequality yield
\begin{equation}\label{eq:ritz-to-continuous-error-bound}
 \|v-\phi_k\|_{LS}
 \leq C_{\mathrm{eq}}\left(
 \sqrt{2\widehat{\mathcal J}_{k,R}^{q}(v)+2\delta J_{k,R}^q}+D_{k,R}\right)+B_{k,R}.
\end{equation}
Here $B_{k,R}$ accounts for the approximation of the continuous terminal basis, and $\delta J_{k,R}^q$ must bound the error of residual integration. To use this expression in \eqref{eq:adjoint-error-bound}, its ingredients require validated upper bounds, including a bound for $D_{k,R}$. The experiments do not provide these bounds. Their reported residuals are the indicators actually used to train, accept, and stop the networks. The additional quadrature diagnostics in Appendix~\ref{app:ritz-consistency-diagnostics} illustrate the possible size of the terminal-target defect without certifying a continuous-error bound.

For the finite-reference comparisons, $\Phi_{n,R}$ denotes synthesis from the first $n$ modal reference fields, and $\widetilde\Phi_n$ denotes synthesis from their computed neural fields, including pruning where specified. Observation Gram assembly uses a finite Legendre expansion in time, with temporal coefficients computed by Gauss quadrature. The assembly and reference levels retain respectively $100$ and $120$ Legendre components and use respectively $200$ and $320$ time nodes. The intermediate reference-level coefficient array is stored in \texttt{float32} before conversion to \texttt{float64} for the Gram products; the assembly-level array is stored in \texttt{float64}. The two levels also use different spatial quadratures. These details, and the separate evaluation of cross moments, are given in Appendix~\ref{app:assembly-diagnostics}.

We denote the measured differences between these complete assembly procedures by $\Delta_G^{\mathrm{asm}}$ and $\Delta_b^{\mathrm{asm}}$. They are assembly sensitivities, distinct from the exact-integral errors $\delta_{G,n,m,h}$ and $\delta_{b,n,m,h}$ in Theorem~\ref{thm:fully-discrete-neural-hum}. In particular, $\Delta_G^{\mathrm{asm}}$ does not isolate quadrature error: it also reflects changes in time-expansion truncation and intermediate precision. Agreement of the two procedures does not bound either procedure's error relative to exact integration.

\subsection{Example 1: a nonconvex Cassini oval}

Let $a(x)\equiv1$, $T=3.5\pi$, and \[\Omega=\{(x,y):P(x,y)>0\},\qquad P=(2.3^4-2^4)-(x^2+y^2)^2+8(x^2-y^2).\] This analytic domain has two lobes joined by a concave waist. The connected C-shaped control region is a boundary band $\omega=\{P/|\nabla P|<\delta(x,y)\}\cap\Omega$, where \[\delta(x,y)=0.38\left(1+\frac{0.45x}{\sqrt{x^2+y^2}}\right)\chi(x,y),\quad\chi(x,y)=1-s(-y/0.3)\left[1-s\bigl((|x|-0.55)/0.4\bigr)\right].\] The smooth step $s$ is zero on $(-\infty,0]$, one on $[1,\infty)$, and $s(z)=e^{-1/z}/(e^{-1/z}+e^{-1/(1-z)})$ for $0<z<1$. At points where $\nabla P=0$ the ratio $P/|\nabla P|$ is set to $+\infty$; set $\delta(0,0)=0$.  These conventions affect no point of the boundary band.  The lower central waist is uncovered. Both initial and target states have nonzero displacement and velocity.  They are smooth spectral bumps, with $g_p=\sum_j e^{-0.5\lambda_j}e_j(p)e_j$, $u^0=g_{(-1.9,0.3)}$ and $u_T^0=-g_{(1.9,-0.3)}$; the velocity coefficients are respectively $0.5\sqrt{\lambda_j}e^{-0.5\lambda_j}e_j(p)$ and their negatives.

The reference uses a Legendre Rayleigh--Ritz basis of degrees $(88,48)$, retaining 311 eigenpairs with $\lambda\leq300$, or 622 terminal modes.  The relative residual tolerance is $\epsilon_k=10^{-3}$ for $k\leq16$ and $1.5\cdot10^{-3}$ thereafter. Figure~\ref{fig:peanut-geometry} displays the region and a slow ray found numerically.  One million random rays and a local search found entry times below $1.97\pi<T$; the smallest observed eigenvalue of the 622-mode observation Gram matrix is about $0.0726$. Neither finite ray sampling nor this finite spectrum proves GCC or a uniform observability inequality.

\begin{figure}[!htbp]\centering
\includegraphics[width=.88\textwidth]{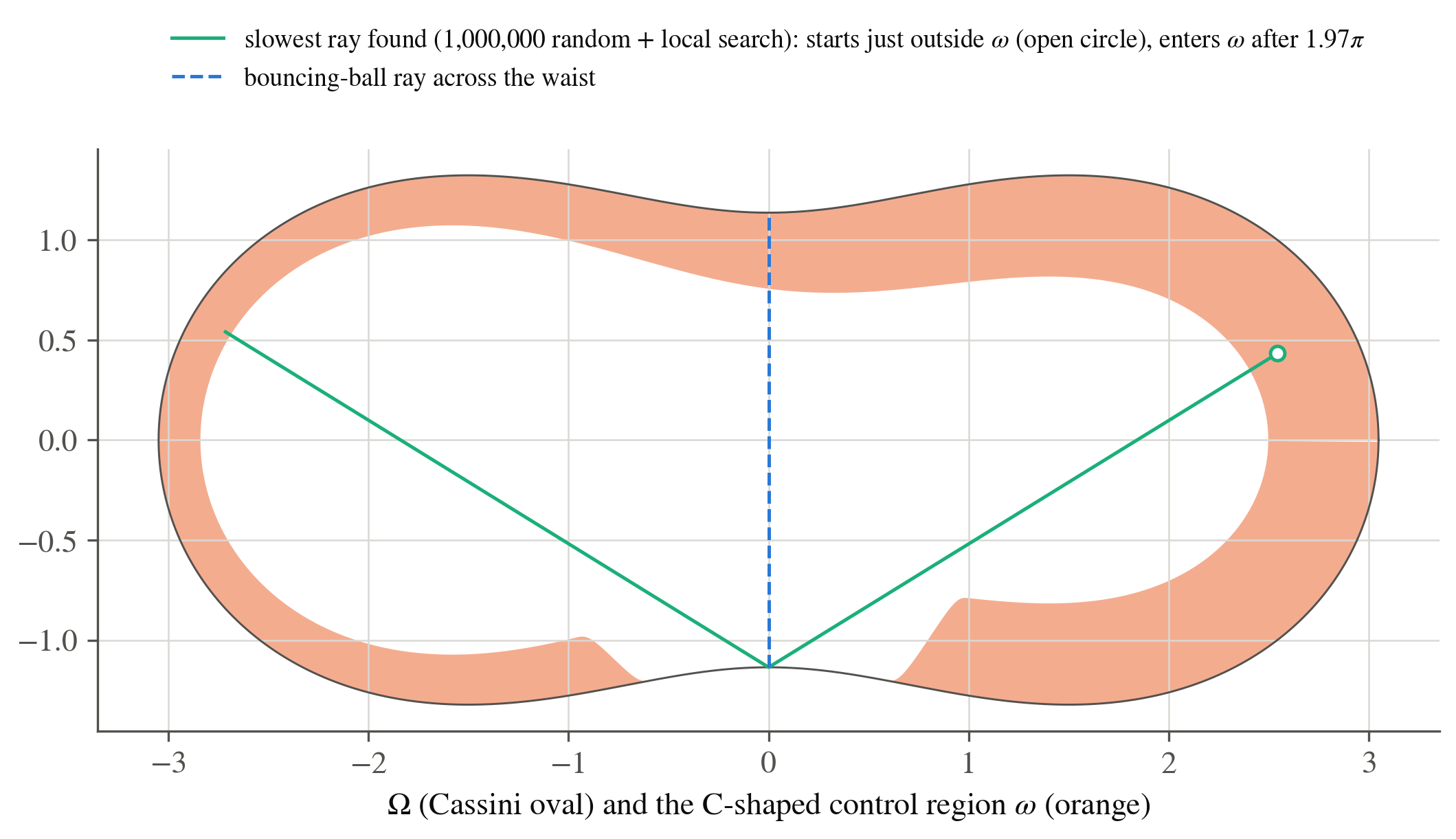}
\caption{Example 1.  The Cassini oval, C-shaped control band, and selected rays.  The reported entry time is a sampled value.}
\label{fig:peanut-geometry}
\end{figure}

\begin{table}[!htbp]\centering
\small
\caption{Example 1: control errors and terminal residual in the $\lambda\leq300$ reference model.  The maximum Ritz-based least-squares residual is normalized as in \eqref{eq:ritz-relative-indicator}.}
\label{tab:peanut}
\setlength{\tabcolsep}{4pt}
\begin{tabular}{@{}rrrrrrr@{}}
    \toprule
    \multicolumn{1}{@{}c}{$n$} &
    \multicolumn{1}{c}{$\max e_k^{\mathrm{rel}}$} &
    \multicolumn{1}{c}{$\|f_n^m-f_{\rm ref}\|$} &
    \multicolumn{1}{c}{$\tau_n^{\rm ref}$} &
    \multicolumn{1}{c}{$\eta_n$} &
    \multicolumn{1}{c}{$R_T(f_n^m)$} &
    \multicolumn{1}{c@{}}{$\lambda_{\min}(\widehat G_n)$}\\
    \midrule
4 & $1.69{\cdot}10^{-3}$ & $3.738{\cdot}10^{-1}$ & $3.738{\cdot}10^{-1}$ & $3.04{\cdot}10^{-5}$ & $1.737{\cdot}10^{-1}$ & $0.0799$\\
8 & $1.70{\cdot}10^{-3}$ & $1.384{\cdot}10^{-1}$ & $1.384{\cdot}10^{-1}$ & $3.88{\cdot}10^{-5}$ & $8.286{\cdot}10^{-2}$ & $0.0749$\\
16 & $2.19{\cdot}10^{-3}$ & $9.422{\cdot}10^{-2}$ & $9.422{\cdot}10^{-2}$ & $4.05{\cdot}10^{-5}$ & $6.750{\cdot}10^{-2}$ & $0.0743$\\
32 & $3.99{\cdot}10^{-3}$ & $6.697{\cdot}10^{-2}$ & $6.697{\cdot}10^{-2}$ & $4.77{\cdot}10^{-5}$ & $5.445{\cdot}10^{-2}$ & $0.0734$\\
\bottomrule

\end{tabular}
\end{table}

The total error in Table~\ref{tab:peanut} is governed by $\tau_n^{\rm ref}$: at $n=32$, $\eta_n$ is about $1.4\cdot10^3$ times smaller.  The residual $R_T(f_{32}^m)=0.05445$ is $18.5\%$ of the target energy norm and about one quarter of the uncontrolled miss.  Figure~\ref{fig:peanut-errors} separates the truncation and neural contributions.  Twenty of the 32 modes stop through stagnation or the step budget (Table~\ref{tab:peanut-modewise} in Appendix~\ref{app:example1-diagnostics}). The adjoint-training problem depends on the domain, coefficient, terminal data, and final time; the control region enters the subsequent HUM reconstruction. The present experiments do not isolate the causes of the different training behavior across examples.

\begin{figure}[!htbp]\centering
\includegraphics[width=.70\textwidth]{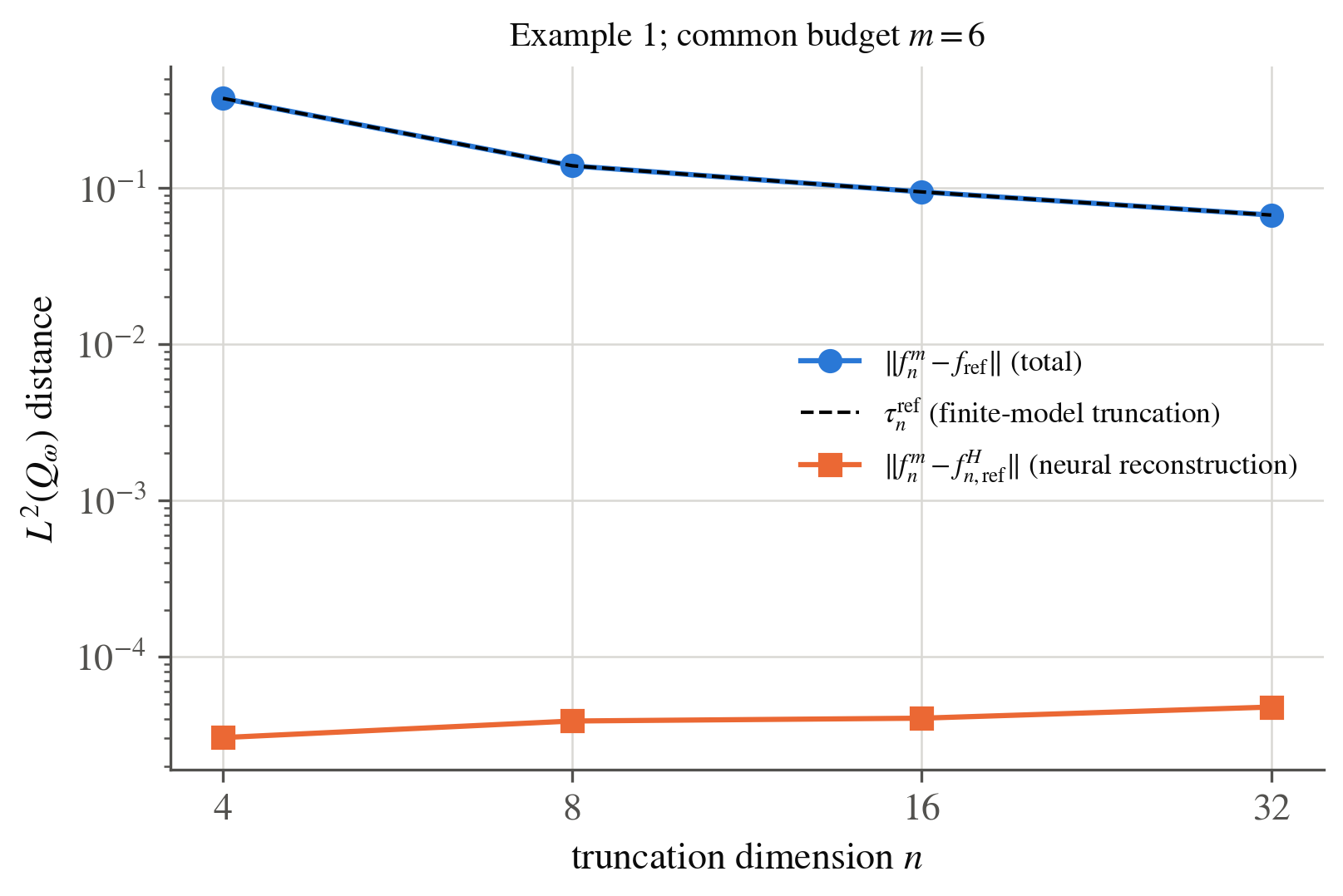}
\caption{Example 1.  Total error against the finite Ritz control, finite-model truncation error, and neural reconstruction error. The truncation curve is the finite-model quantity $\tau_n^{\mathrm{ref}}$.}
\label{fig:peanut-errors}
\end{figure}

Figure~\ref{fig:peanut-states} gives a physical-space view of the controlled terminal state for \(n=32\). The displacement and velocity generated by \(f_{32}^{m}\) reproduce the dominant localization, sign, and amplitude of the prescribed target in the right lobe. The remaining discrepancies are smaller in amplitude and contain more oscillatory components, particularly in the velocity. Since the discrepancy panels use tighter color scales than the state panels, their apparent visual prominence should not be interpreted as their relative magnitude. Quantitative terminal accuracy is measured instead by the energy residual \(R_T\) reported in Table~\ref{tab:peanut}.

\begin{figure}[!htbp]\centering
\includegraphics[width=\textwidth]{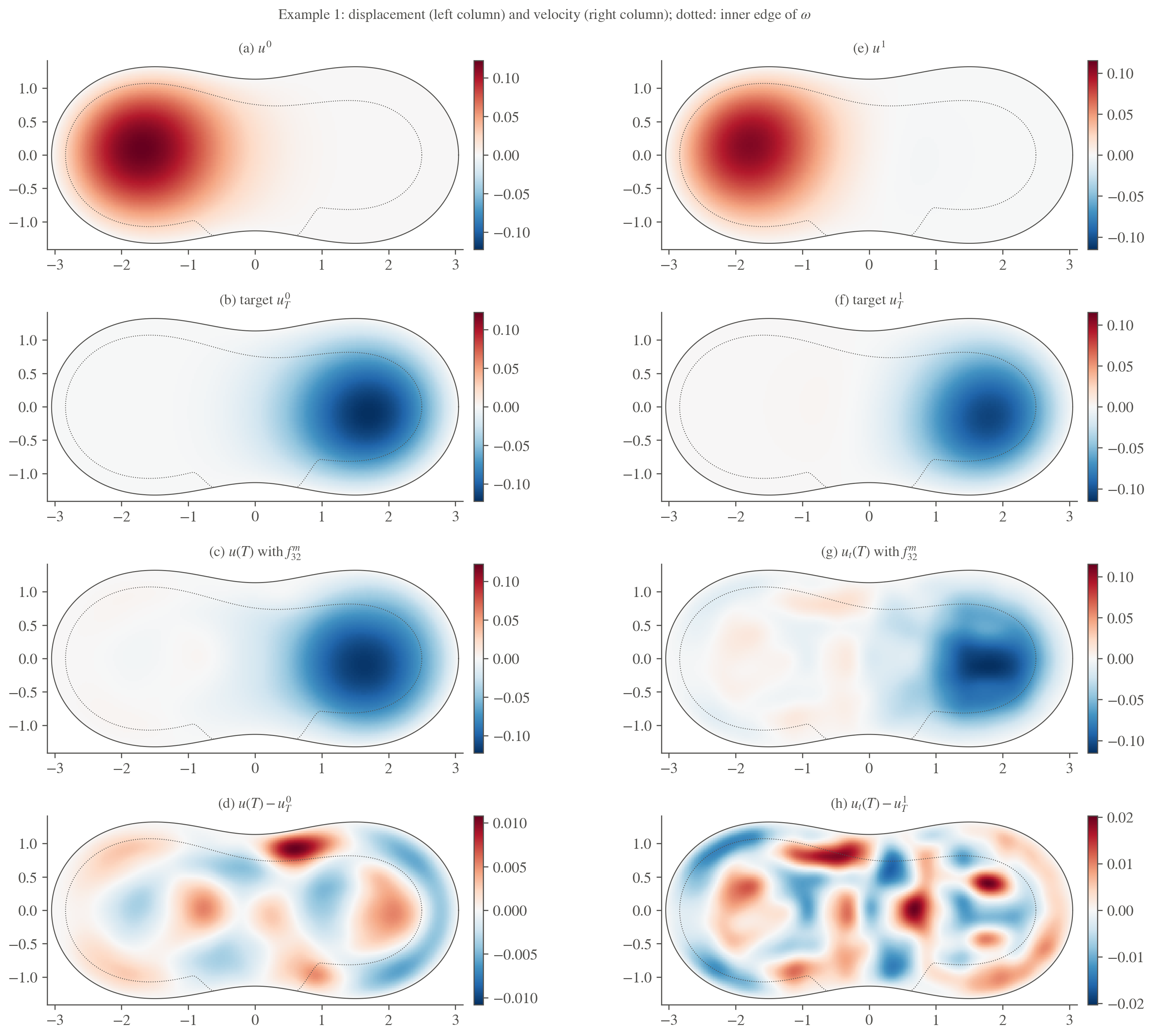}
\caption{Example 1: initial, target, and controlled terminal states for \(n=32\). The left and right columns show displacement and velocity, respectively; the rows show the initial state, prescribed target, terminal state generated by \(f_{32}^{m}\), and pointwise terminal discrepancy. The dotted curve marks the inner edge of the control region \(\omega\). The discrepancy panels use separate, narrower color scales.}
\label{fig:peanut-states}
\end{figure}

A finer Legendre basis of degrees $(104,56)$ with cutoff $\lambda\leq500$ changes the reference control by $0.482\%$ relative to the finer reference norm when the two controls are compared directly as functions on $Q_\omega$.  This is a measured distance between two Ritz controls, not an estimate of $\|f_{\rm ref}-f^*\|$.  The finer Ritz modal forward solve changes $R_T(f_{32,\mathrm{ref}}^H)$ by $0.12\%$.  There is no forward solve for this example independent of the Ritz construction; thus the displayed terminal residual is a finite-model quantity.

\subsection{Example 2: a slow lens in a square}

Let $\Omega=(0,\pi)^2$, $\omega=\{x>3\pi/4\}\cup\{y>3\pi/4\}$, and $T=3\pi$. The coefficient \[a(x,y)=1-0.6G(x)G(y),\qquad G(s)=\exp[-2(\cos s-\cos(\pi/3))^2]\] has minimum $0.4$ in the uncontrolled corner.  Because $G$ is smooth, even at the coordinate walls, and $2\pi$-periodic, the Dirichlet problem has a smooth odd-reflection extension to a torus. This addresses the corner regularity required by the sine spectral reference, although the physical domain does not have the smooth boundary assumed in the general theorem.  The initial displacement and velocity have sine coefficients $0.6^{j+\ell}\mu_{j\ell}^{-1}$ and $0.6^{j+\ell}\mu_{j\ell}^{-1/2}$, respectively; the target displacement has coefficients $(-1)^{j+\ell}0.6^{j+\ell}\mu_{j\ell}^{-1}$ and the target velocity is zero, with $\mu_{j\ell}=j^2+\ell^2$.

The reference comprises 222 Ritz eigenpairs with $\lambda\leq250$ (444 terminal modes), using sine indices at most 56.  Here $\epsilon_k=10^{-3}$ for $k\leq16$ and $1.5\cdot10^{-3}$ afterward; all 32 trained modes meet these relative targets.  The coefficient bends the rays toward the slow part of the domain.  Of $1{,}841{,}440$ rays examined by random sampling, a deterministic sweep, and local search, the longest entry time found was $2.597\pi<T$.  The smallest finite Gram eigenvalue is approximately $0.5813$ for 444 modes and $0.58123$ for an enlarged 1844-mode Ritz system.  Figure~\ref{fig:lens-geometry} shows these diagnostics, which remain numerical checks.

\begin{figure}[!htbp]\centering
\includegraphics[width=\textwidth]{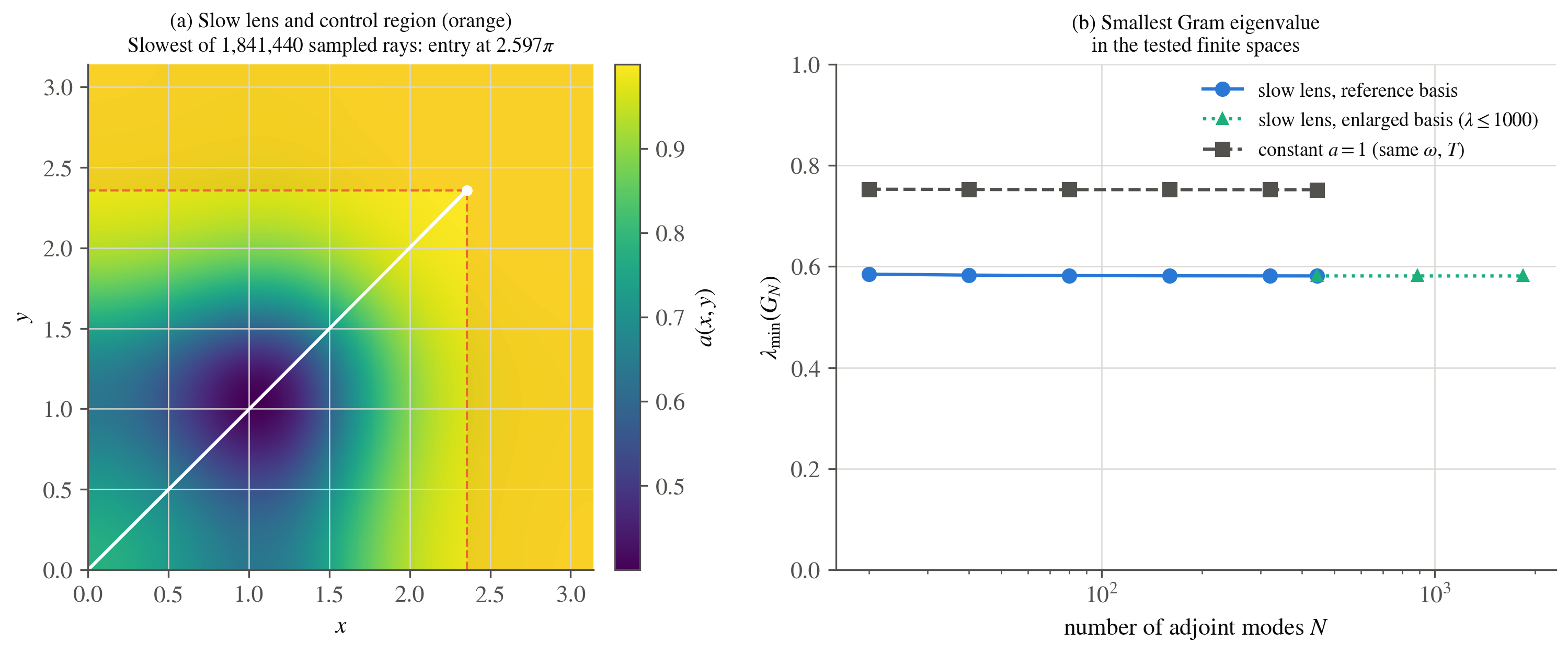}
\caption{Example 2.  Slow-lens coefficient and sampled ray (left); smallest observation Gram eigenvalue in finite reference systems (right). The right panel is a diagnostic over the displayed finite dimensions; its apparent plateau does not certify a uniform infinite-dimensional lower bound.}
\label{fig:lens-geometry}
\end{figure}

\begin{table}[!htbp]\centering
\small
\caption{Example 2: results in the 444-mode reference model. The maximum Ritz-based least-squares residual is normalized as in \eqref{eq:ritz-relative-indicator}.}

\setlength{\tabcolsep}{4pt}
\begin{tabular}{@{}rrrrrrr@{}}
    \toprule
    \multicolumn{1}{@{}c}{$n$} &
    \multicolumn{1}{c}{$\max e_k^{\mathrm{rel}}$} &
    \multicolumn{1}{c}{$\|f_n^m-f_{\rm ref}\|$} &
    \multicolumn{1}{c}{$\tau_n^{\rm ref}$} &
    \multicolumn{1}{c}{$\eta_n$} &
    \multicolumn{1}{c}{$R_T(f_n^m)$} &
    \multicolumn{1}{c@{}}{$\lambda_{\min}(\widehat G_n)$}\\
    \midrule
4 & $9.10{\cdot}10^{-4}$ & $1.5308{\cdot}10^{-1}$ & $1.5307{\cdot}10^{-1}$ & $1.64{\cdot}10^{-5}$ & $1.9171{\cdot}10^{-1}$ & $0.6059$\\
8 & $9.10{\cdot}10^{-4}$ & $1.1129{\cdot}10^{-1}$ & $1.1129{\cdot}10^{-1}$ & $1.71{\cdot}10^{-5}$ & $1.3008{\cdot}10^{-1}$ & $0.6043$\\
16 & $9.97{\cdot}10^{-4}$ & $7.0579{\cdot}10^{-2}$ & $7.0579{\cdot}10^{-2}$ & $1.76{\cdot}10^{-5}$ & $8.6015{\cdot}10^{-2}$ & $0.5912$\\
32 & $1.46{\cdot}10^{-3}$ & $2.5040{\cdot}10^{-2}$ & $2.5040{\cdot}10^{-2}$ & $3.86{\cdot}10^{-5}$ & $2.8605{\cdot}10^{-2}$ & $0.5833$\\
\bottomrule

\end{tabular}
\end{table}

At $n=32$, the neural reconstruction error is $3.86\cdot10^{-5}$, while the finite-model truncation error is $2.504\cdot10^{-2}$. Figure~\ref{fig:lens-errors} illustrates this separation. The maximum absolute Ritz-based least-squares residual increases from $4.55\cdot10^{-3}$ at $n=4$ to $3.40\cdot10^{-2}$ at $n=32$, despite the relative residuals remaining near $10^{-3}$.

Only Example~2 uses post-training pruning. Neurons satisfying $\max\{|\alpha|,|W_1|,|W_2|\}>24$ were removed before postprocessing: 375 of 16,025 neurons in total. Their coefficients obeyed $|c|\leq4.2\cdot10^{-6}\max|c|$, and the removed part was bounded by $1.1\cdot10^{-6}$ in $L^2(Q_\omega)$ per mode. The reported least-squares residual indicators were recomputed for the pruned fields on $240\times120^2$ and $320\times160^2$ Gauss rules. The global Galerkin coefficients were not recomputed after pruning, so the postprocessed fields are not asserted to be exact Galerkin projections; this postprocessing introduces an additional discrepancy from the discrete Galerkin minimizer. Applying the fully discrete analysis would require a bound on that discrepancy together with the Ritz-data consistency and quadrature errors.

Within the finite model, the directly measured synthesis difference $\|\widetilde\Phi_{32}-\Phi_{32,R}\|$ is $6.23\cdot10^{-4}$; the values for all tested $n$ are listed in Table~\ref{tab:lens-finite-diagnostics}. If $\alpha_{444}:=\sqrt{\lambda_{\min}(G_{444})}\approx0.7624$ denotes the \emph{finite-model} lower constant, then $\alpha_{444}/2\approx0.3812$. Likewise, the measured Gram assembly sensitivity is $\Delta_G^{\mathrm{asm}}=1.6\cdot10^{-9}$, while $\alpha_{444}^2/8\approx0.0727$. These comparisons do not verify the bound $C_X\sqrt n E_{n,m,h}^{\rm adj}\leq\alpha/2$: a conservative $C_X$ can be bounded using \eqref{eq:explicit-ls-embedding-constant}, but neither a continuous observability lower bound nor a certified $E_{n,m,h}^{\rm adj}$ is supplied. The measured assembly sensitivity is also not a certified error relative to exact integration.

\begin{figure}[!htbp]\centering
\includegraphics[width=.70\textwidth]{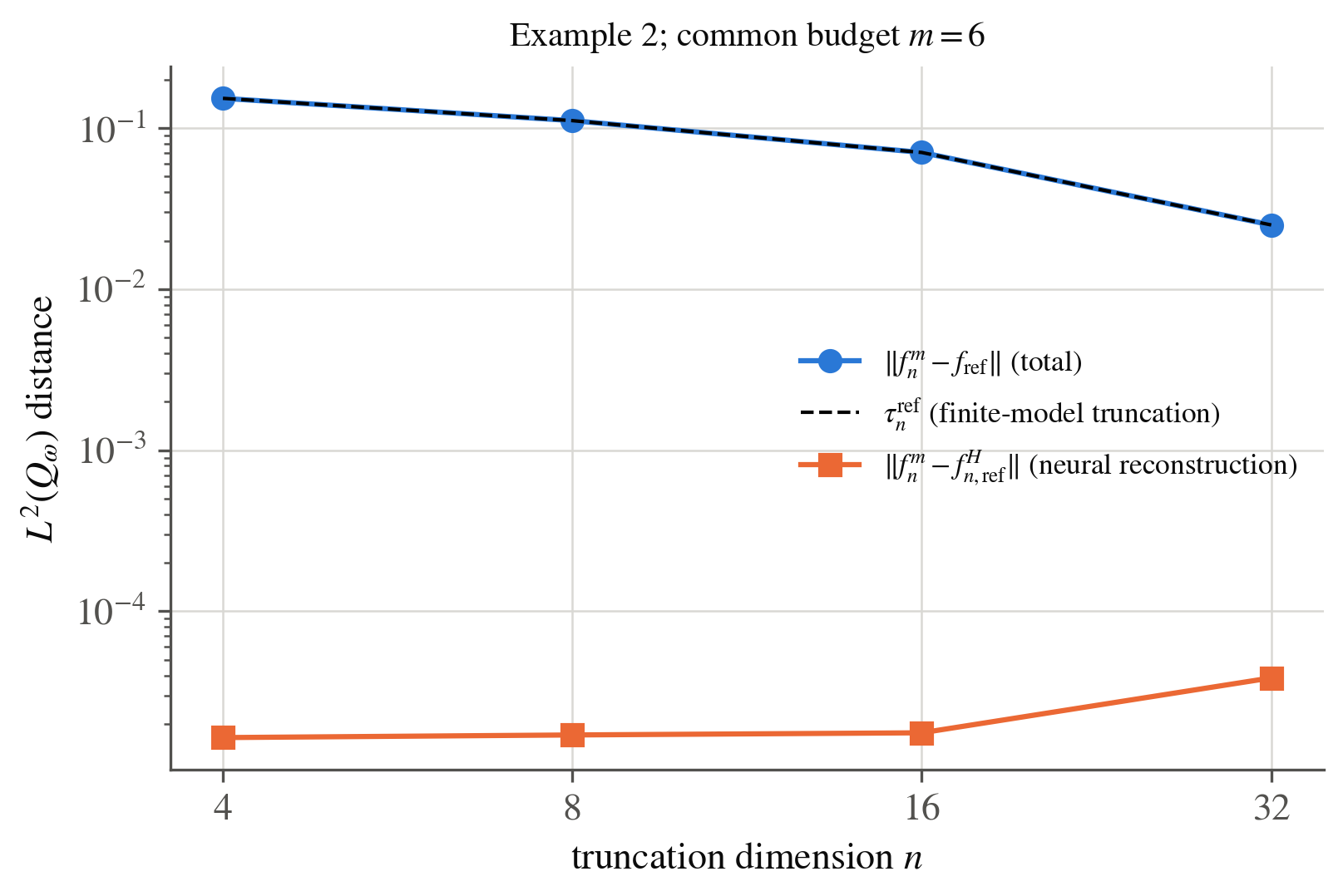}
\caption{Example 2.  Control error against the finite reference and its truncation and neural contributions. The truncation curve is $\tau_n^{\mathrm{ref}}$.}
\label{fig:lens-errors}
\end{figure}

The corresponding state fields are displayed in Figure~\ref{fig:lens-states}. The controlled terminal displacement captures the target profile concentrated near the upper-right part of the square, while the terminal velocity remains small relative to the initial velocity, consistently with the zero target velocity. The discrepancy panels display the pointwise terminal discrepancies and use substantially narrower color scales than the state panels. These plots provide a qualitative physical-space illustration; the terminal energy residual $R_T$ and its independent forward verification are reported separately below.

\begin{figure}[!htbp]\centering
\includegraphics[width=\textwidth]{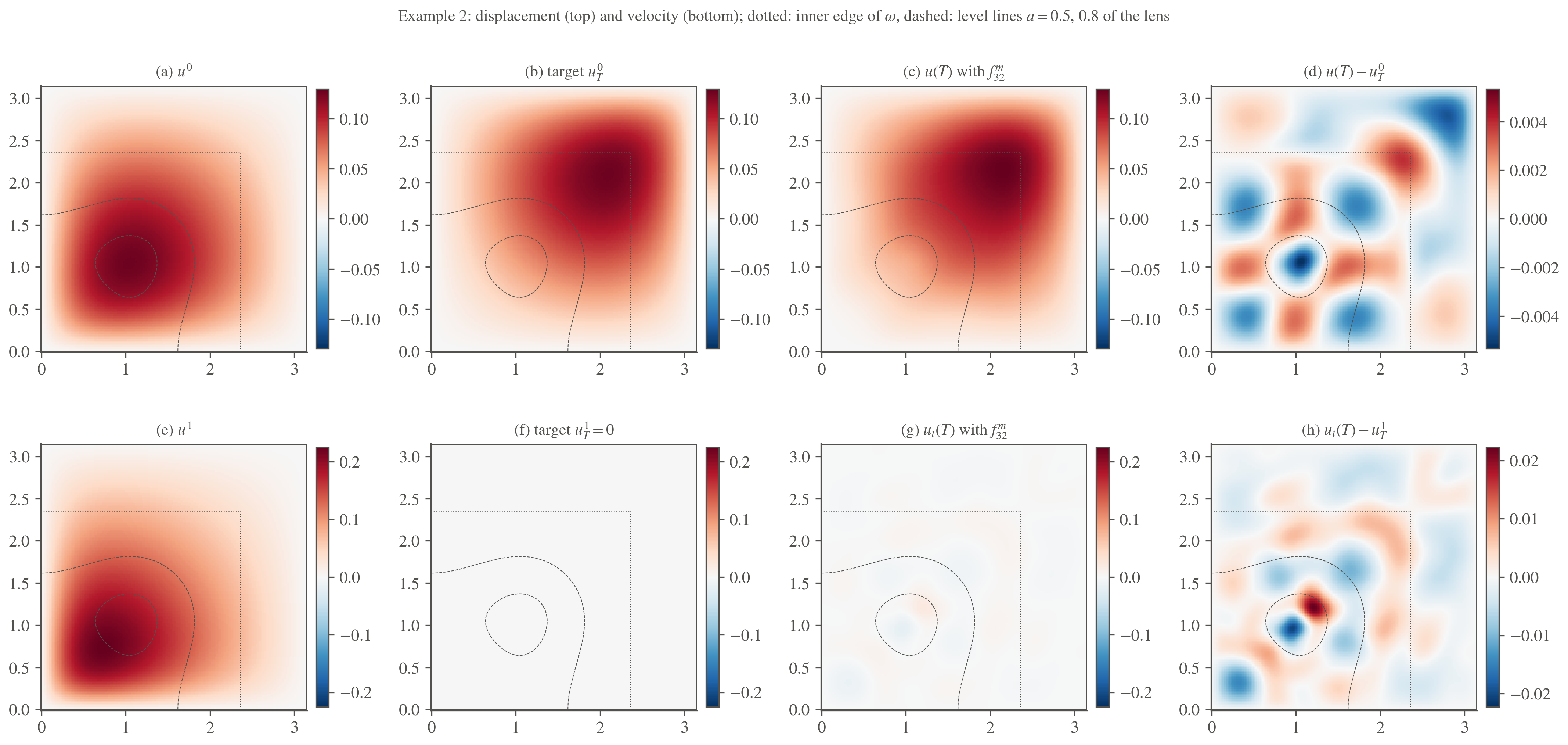}
\caption{Example 2: initial, target, and controlled terminal states for \(n=32\). The top and bottom rows show displacement and velocity, respectively; the columns show the initial state, prescribed target, terminal state generated by \(f_{32}^{m}\), and pointwise terminal discrepancy. The dotted line marks the inner edge of the L-shaped control region \(\omega\), and the dashed curves are the level sets \(a=0.5\) and \(a=0.8\) of the slow lens. The discrepancy panels use separate, narrower color scales. Panel (f) is uniformly zero because the target velocity vanishes.}
\label{fig:lens-states}
\end{figure}

An independent flux-form finite-difference forward solver gives $R_T(f_{32}^m)=0.028677$ on a $512^2$ grid and $0.028687$ after extrapolation, against $0.028605$ in the original Ritz model. An enlarged Ritz solve gives $0.028670$; the complete forward checks are listed in Table~\ref{tab:lens-forward-checks}. For the 444-mode reference control itself, the extrapolated independent residual is about $0.002012$, whereas it is essentially zero inside the 444-mode model.  These values show sensitivity to the forward discretization and reference-space enlargement; they do not isolate the contributions of omitted modes, quadrature, and forward-solver error.  They support the reported terminal residual for $f_{32}^m$ at the tested resolutions without proving exact controllability for the continuous square problem.

\subsection{Example 3: a graded medium in a ball}

In three dimensions take $\Omega=\{x\in\mathbb R^3:|x|<\pi\}$, $T=3.5\pi$, and \[a(x)=0.8-\frac{0.2x_1}{\pi},\qquad\omega=\left\{x:\pi-\frac{\pi}{4}\bigl(1+0.6\widehat x_1\bigr)<|x|<\pi\right\}.\] Here $\widehat x=x/|x|$ and the shell width is $\delta(\widehat x)=\frac{\pi}{4}(1+0.6\widehat x_1)$. The data are smooth heat-kernel bumps built from Dirichlet Laplacian eigenfunctions with multiplier $e^{-\mu_j}$; their initial and target centers lie at radius $0.55\pi$ in the directions $(110^\circ,200^\circ)$ and $(60^\circ,40^\circ)$ in spherical angles.  Displacement and velocity components are both nonzero, with velocity multipliers $\pm0.5\sqrt{\mu_j}e^{-\mu_j}$.

The reference uses polynomial Rayleigh--Ritz degree 28 and $\lambda\leq40$, giving 657 eigenpairs and 1314 terminal modes. The relative residual targets are $10^{-4}$ for $k\leq16$ and $2\cdot10^{-4}$ afterward.  At $n=32$, 31 of 32 modes meet their targets; mode 21 reaches the six-step budget with relative residual $2.36\cdot10^{-4}$.  We retain this result and report the unmet tolerance explicitly.  One million sampled rays and a local search found all entry times below $1.69\pi<T$; the smallest Gram eigenvalue falls from $0.490$ at 20 modes to about $0.329$ at 1314 modes (Figure~\ref{fig:graded-geometry}).  This finite trend cannot establish a uniform lower bound.

\begin{figure}[!htbp]\centering
\includegraphics[width=\textwidth]{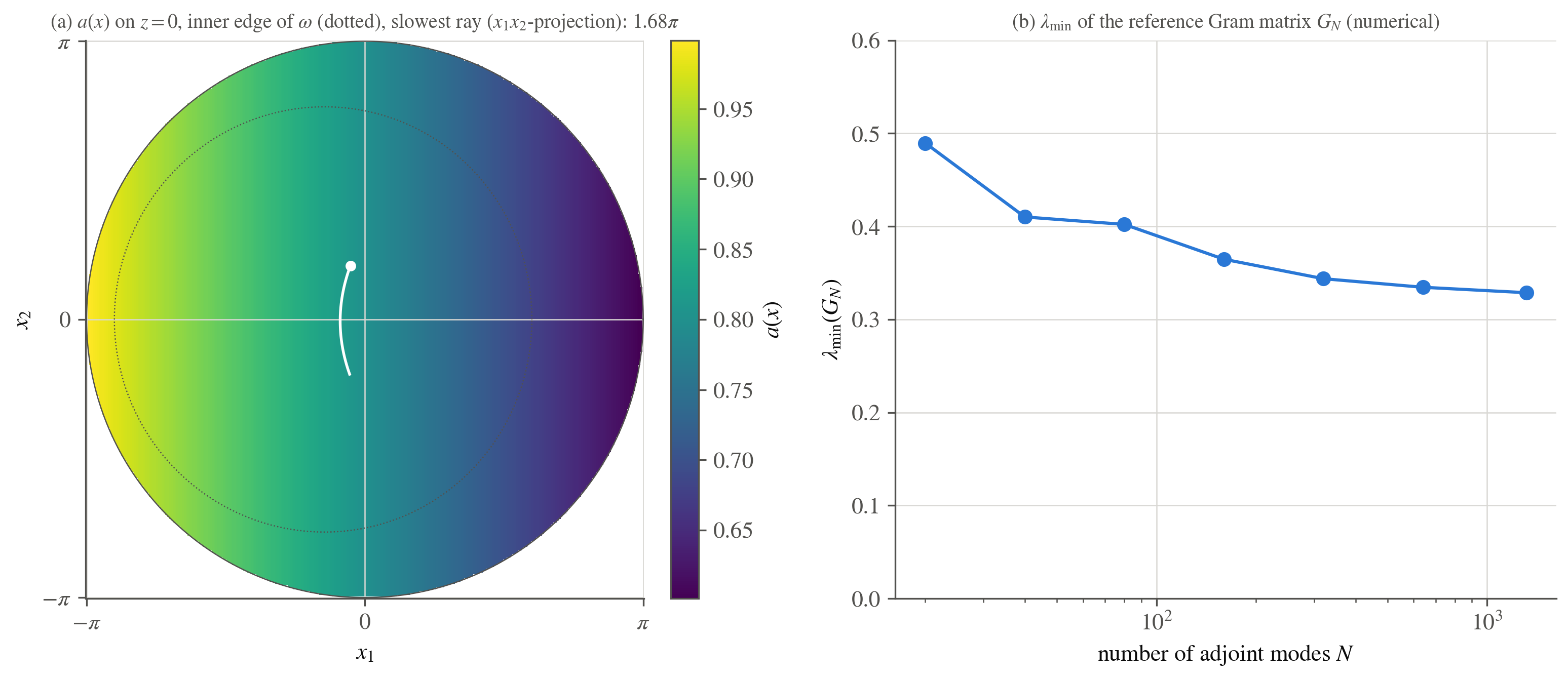}
\caption{Example 3.  A planar slice of the graded coefficient, the shell edge and a sampled ray (left); finite observation Gram eigenvalues (right).}
\label{fig:graded-geometry}
\end{figure}

\begin{table}[!htbp]\centering
\small
\caption{Example 3: results in the $\lambda\leq40$ Ritz model. The maximum Ritz-based least-squares residual is normalized as in \eqref{eq:ritz-relative-indicator}; $N_{\rm tol}$ is the number of modes meeting their targets.}
\label{tab:graded}
\setlength{\tabcolsep}{3pt}
\begin{tabular}{@{}rrrrrrrr@{}}
    \toprule
    \multicolumn{1}{@{}c}{$n$} &
    \multicolumn{1}{c}{$N_{\rm tol}$} &
    \multicolumn{1}{c}{$\max e_k^{\mathrm{rel}}$} &
    \multicolumn{1}{c}{$\|f_n^m-f_{\rm ref}\|$} &
    \multicolumn{1}{c}{$\tau_n^{\rm ref}$} &
    \multicolumn{1}{c}{$\eta_n$} &
    \multicolumn{1}{c}{$R_T(f_n^m)$} &
    \multicolumn{1}{c@{}}{$\lambda_{\min}(\widehat G_n)$}\\
    \midrule
4 & 4 & $7.14{\cdot}10^{-5}$ & $5.5437{\cdot}10^{-2}$ & $5.5437{\cdot}10^{-2}$ & $3.33{\cdot}10^{-7}$ & $5.8390{\cdot}10^{-2}$ & $0.6340$\\
8 & 8 & $9.28{\cdot}10^{-5}$ & $4.4856{\cdot}10^{-2}$ & $4.4856{\cdot}10^{-2}$ & $4.94{\cdot}10^{-7}$ & $4.8835{\cdot}10^{-2}$ & $0.6340$\\
16 & 16 & $9.28{\cdot}10^{-5}$ & $3.2018{\cdot}10^{-2}$ & $3.2018{\cdot}10^{-2}$ & $5.93{\cdot}10^{-7}$ & $3.1206{\cdot}10^{-2}$ & $0.5872$\\
32 & 31 & $2.36{\cdot}10^{-4}$ & $2.7045{\cdot}10^{-2}$ & $2.7045{\cdot}10^{-2}$ & $3.00{\cdot}10^{-6}$ & $2.6102{\cdot}10^{-2}$ & $0.4864$\\
\bottomrule

\end{tabular}
\end{table}

The neural error is between $3.3\cdot10^{-7}$ and $3.0\cdot10^{-6}$, much smaller than the corresponding finite-model truncation errors (Table~\ref{tab:graded} and Figure~\ref{fig:graded-errors}).  The Ritz degree check gives $\|f_{\rm ref}^{(28)}-f_{\rm ref}^{(32)}\| =4.6\cdot10^{-7}$ at $\lambda\leq40$ (Table~\ref{tab:graded-degree-check}). The cutoff check, by contrast, gives $\|f_{\rm ref}(40)-f_{\rm ref}(60)\| =2.108\cdot10^{-3}$. This is a measured change between finite Ritz controls, not the unknown distance from $f_{\rm ref}(40)$ to $f^*$.

\begin{figure}[!htbp]\centering
\includegraphics[width=.70\textwidth]{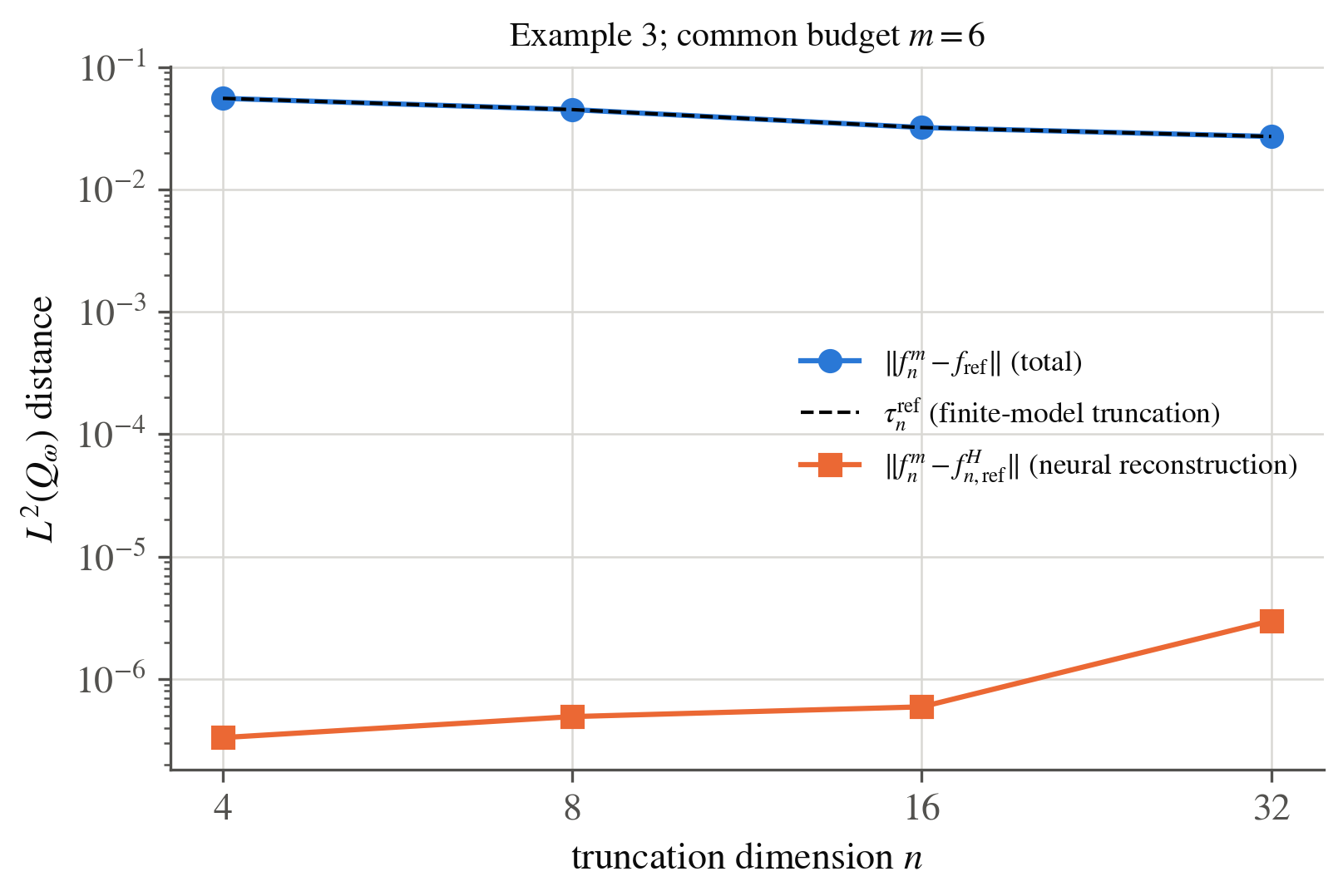}
\caption{Example 3.  Error against the finite reference, finite-model truncation error, and neural reconstruction error. The truncation curve is $\tau_n^{\mathrm{ref}}$.}
\label{fig:graded-errors}
\end{figure}

\begin{table}[!htbp]\centering
\small
\caption{Example 3: cutoff sensitivity of the degree-32 Ritz reference.  The differences are measured against the $\Lambda=60$ control.}
\begin{tabular}{@{}rrrr@{}}
    \toprule
    \multicolumn{1}{@{}c}{$\Lambda$} &
    \multicolumn{1}{c}{reference modes} &
    \multicolumn{1}{c}{$\lambda_{\min}(G_N)$} &
    \multicolumn{1}{c@{}}{$\|f_{\rm ref}(\Lambda)-f_{\rm ref}(60)\|$}\\
    \midrule
20 & 440 & 0.3374 & $4.87\cdot10^{-3}$\\
30 & 832 & 0.3327 & $3.48\cdot10^{-3}$\\
40 & 1314 & 0.3287 & $2.11\cdot10^{-3}$\\
50 & 1956 & 0.3275 & $1.25\cdot10^{-3}$\\
60 & 2546 & 0.3266 & ---\\
\bottomrule

\end{tabular}
\end{table}

Figure~\ref{fig:graded-states} shows the state fields on the central slice \(z=0\). For both displacement and velocity, the controlled terminal state recovers the dominant location and sign of the target profile, although a structured residual remains visible on this slice. The discrepancy panels again use narrower color scales than the corresponding state panels. Because only one planar slice is shown, this visualization is qualitative; the value of \(R_T\) is computed from the full three-dimensional fields.

\begin{figure}[!htbp]
\centering
\includegraphics[width=\textwidth]{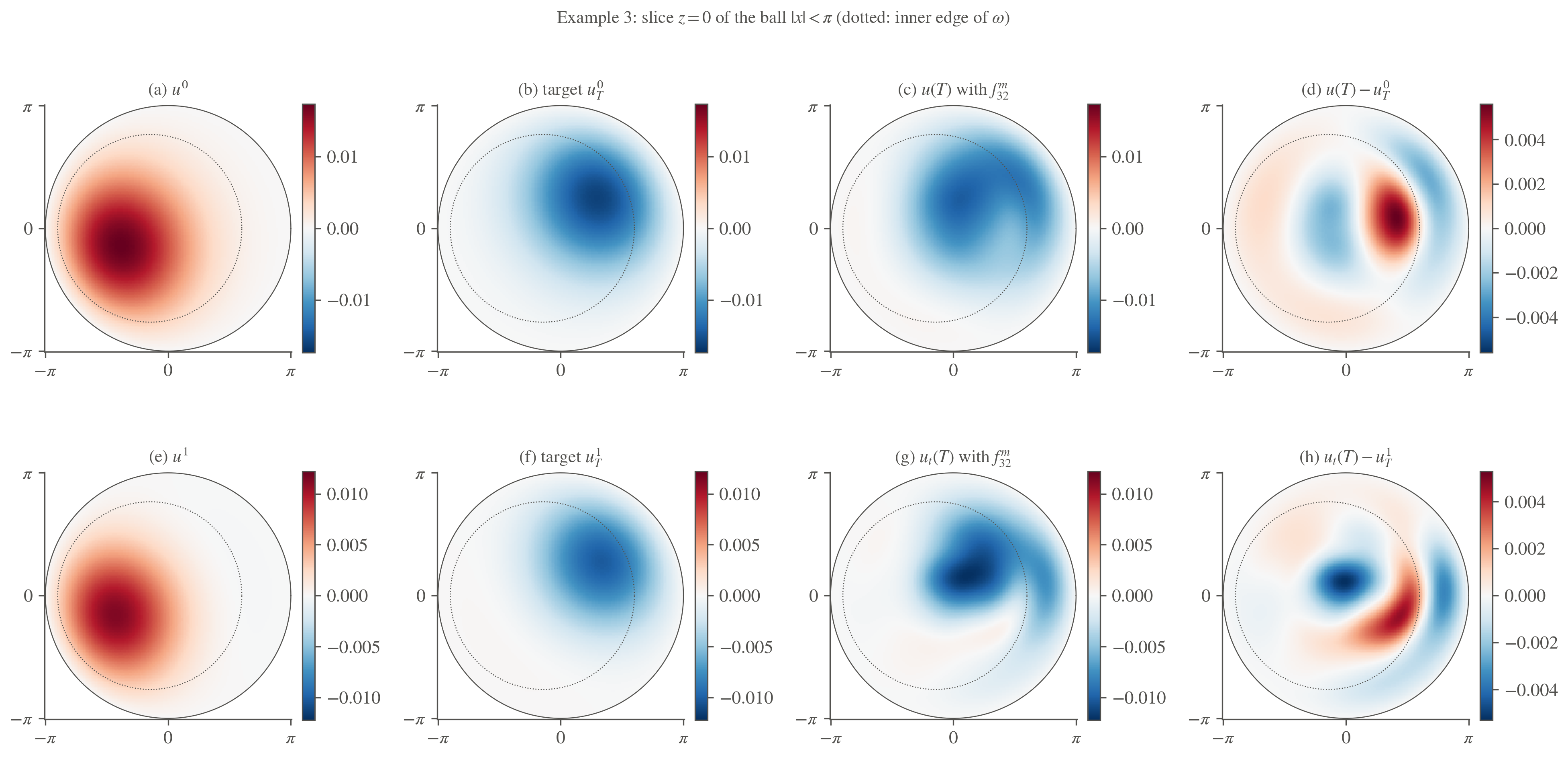}
\caption{Example 3: central slice \(z=0\) of the initial, target, and controlled terminal states for \(n=32\). The top and bottom rows show displacement and velocity, respectively; the columns show the initial state, prescribed target, terminal state generated by \(f_{32}^{m}\), and pointwise terminal discrepancy. The dotted curve marks the inner edge of the shell-shaped control region \(\omega\). The discrepancy panels use separate, narrower color scales.}
\label{fig:graded-states}
\end{figure}

An independent forward solver using spherical Bessel functions, spherical harmonics, and the exact stiffness matrix of $a$ gives $R_T(f_{32}^m)=0.026139$ with its largest Laplacian cutoff $\mu\leq200$ (Table~\ref{tab:graded-forward-checks}), against $0.026102$ in the original Ritz model.  It gives $R_T(f_{\rm ref}(40))=0.002706$ at that cutoff, instead of the nearly zero internal Ritz residual. The latter independent value still changes with forward cutoff, from $0.002629$ at $\mu\leq150$ to $0.002706$ at $\mu\leq200$; it is not presented as a converged continuous-PDE residual.  By comparison, the network controls' residuals change by at most $4\cdot10^{-6}$ between these two cutoffs.

\subsection{Discussion and limits of the numerical evidence}

Across all three examples and all tested $n\in\{4,8,16,32\}$, the neural reconstruction error $\eta_n$ is much smaller than $\tau_n^{\mathrm{ref}}$.  This identifies spectral truncation as the leading measured error within each finite reference model; it does not give a numerical upper bound on $\|f_n^m-f^*\|$ without a bound on the reference error. The ray experiments and Gram spectra are consistent with observability in the tested geometries and frequency ranges, but do not establish the infinite-dimensional hypotheses of Theorem~\ref{thm:fully-discrete-neural-hum}.  The lens and graded examples include independent forward checks; the Cassini-oval example has only a finer Ritz check. These distinctions are essential when comparing the numerical results with the conditional analytical estimates.

A continuous-error interpretation additionally requires estimates of the strong eigenpair consistency defects, terminal-basis approximation, residual-integration errors, and exact-integral assembly errors, including finite time-expansion and roundoff effects. The selected diagnostics in Table~\ref{tab:ritz-consistency-diagnostics} show that the terminal-target defect can exceed the neural Ritz residual by nearly an order of magnitude; they are quadrature measurements rather than certified error bounds. Examples~1 and~3 use the stored error-assessment quadratures without the post-training frequency pruning and two resolved error rules used in Example~2. Since derivatives amplify high-frequency features, small output coefficients alone do not certify their contribution to the least-squares residual; frequency-resolved quadrature or a derivative-norm tail estimate is needed for such certification.

\section{Conclusion}
\label{sec:conclusion}

The neural Galerkin--HUM construction provides a way to train adjoint waves adaptively while retaining linear variational and control reconstruction steps.  Under the stated observability and numerical stability assumptions, the analysis yields a coupled error estimate and convergence to the minimum-norm wave control.  In the three finite-reference experiments, increasing the terminal basis reduces the dominant measured error; neural reconstruction errors remain several orders of magnitude smaller.  Establishing continuous GCC for the geometries, bounding the finite-reference error against $f^*$, controlling the terminal eigenpair consistency defects, residual integration and complete assembly errors, and completing an independent forward check for the nonconvex example would strengthen the connection between these numerical observations and the conditional theory. The Ritz-based least-squares residuals used in the experiments are practical indicators; no equality with, or certified upper bound for, the continuous adjoint errors is inferred from those indicators alone.

\appendix

\section{Additional numerical details and diagnostics}\label{app:numerical-diagnostics}

This appendix provides additional implementation details and diagnostics for the three experiments of Section~\ref{sec:numerics}. The reported control distances use the finite Rayleigh--Ritz reference models of the main text, and the adjoint indicators are the Ritz-based residuals defined in \eqref{eq:ritz-relative-indicator}. Consequently, these results document the behavior of the numerical method within the tested finite-dimensional models; they are not certified bounds for the exact infinite-dimensional control problem.

For a common enrichment budget \(m\), mode \(k\) is represented by \[\phi_k^{\min\{m,m_k\}},\] where \(m_k\) is the number of directions actually accepted for that mode. Throughout this appendix and in its figures, \(\phi_k^{m}\) denotes the computed approximation \(\widetilde\phi_k^{m,h}\) after \(m\) accepted directions. Thus, once a mode satisfies its stopping criterion, reaches the enrichment budget, or stops through stagnation, its approximation is kept fixed as the common budget is increased. In the modewise convergence plots, the horizontal coordinate records the number of directions accepted for the displayed mode, and a curve terminates at its actual value of \(m_k\).

In all three examples, the width of the first training attempt at enrichment step $m$ is
\[
 p_{k,m}=\min\!\left\{
 \operatorname{round}\!\left(\max\{200,40\sqrt{\lambda_i}\}\,1.1^{m-1}\right),600\right\},
 \qquad i=\lceil k/2\rceil,
\]
where $\operatorname{round}$ is the integer rounding used by Python. If a direction fails a gate, its width becomes $\min\{\operatorname{round}(1.1p_{k,m}),600\}$ for one restart. If that attempt also fails, enrichment of the mode stops. A restart occurred only once, for mode $k=7$ of Example~1 at step $m=6$, and the second attempt was accepted.

For approximately $80\%$ of the neurons, temporal frequencies are initialized in $\pm\sqrt{\lambda_i}[0.95,1.05]$ and spatial weights are drawn componentwise from $U(-S_x,S_x)$. For the remaining approximately $20\%$, the magnitudes $|\alpha|$ and $|W|$ are sampled independently in $\sqrt{\lambda_i}[0.9,1.1]$, with uniform spatial directions. The fraction is rounded to an integer number of neurons. This initialization is near a constant-coefficient dispersion shell; the code does not enforce the equality $\alpha^2=|W|^2$.

The stored configuration uses at most 400 Adam epochs, patience 100, learning rate $\ell_{m,j}=(0.03/1.15^{m-1})\,0.9^{j/400}$ at Adam iteration $j$. Output coefficients are recomputed using the local pseudoinverse threshold $10^{-12}$. Levenberg--Marquardt jointly refines the output and hidden parameters, with at most 500 iterations at the first enrichment and 150 at later enrichments. Its early-stopping window is 25. Candidates are normalized on the training rule and accepted only if both correlation gates and the discrete rank gate hold. The common parameters are $\varrho_{\mathrm{acc}}=0.1$ and $\tau_{\mathrm{rank}}=0.2$, with at most six accepted directions per terminal mode.

The tolerance test is checked before the next enrichment. With at least three accepted steps recorded, stagnation is detected when the latest validation residual exceeds $0.8$ times the residual two accepted steps earlier. The stored seed is 0, and each attempt uses a NumPy generator seeded by the tuple $(0,k,m,\text{attempt})$. The runs use two parallel workers. All 32 terminal modes are trained once; reconstructions at $n=4,8,16,32$ use their corresponding prefixes. Table~\ref{tab:appendix-configurations} collects the example-dependent settings.

\begin{table}[!htbp]\centering
\small
\caption{Principal example-dependent computational settings. The number in parentheses in the reference-size row is the number of terminal adjoint modes, twice the number of retained eigenpairs. The scale \(S_x\) bounds the uniform initialization of the spatial weights.}
\label{tab:appendix-configurations}
\setlength{\tabcolsep}{3pt}
\renewcommand{\arraystretch}{1.15}
\begin{tabular}{@{}>{\raggedright\arraybackslash}p{0.16\textwidth}*{3}{>{\raggedright\arraybackslash}p{0.265\textwidth}}@{}}
\toprule
& \multicolumn{1}{c}{Example 1}
& \multicolumn{1}{c}{Example 2}
& \multicolumn{1}{c@{}}{Example 3}\\
\midrule
Reference basis & Legendre Rayleigh--Ritz\newline basis of degrees \((88,48)\) & Sine Rayleigh--Ritz basis\newline with indices at most \(56\) & Polynomial Rayleigh--Ritz\newline basis of total degree \(28\)\\
Spectral cutoff & \(\lambda\leq300\) & \(\lambda\leq250\) & \(\lambda\leq40\)\\
Reference size & 311 eigenpairs \((622)\) & 222 eigenpairs \((444)\) & 657 eigenpairs \((1314)\)\\
Boundary\newline factor \(\rho\) & \(P(x,y)/P(0,0)\) & \(\sin x\sin y\) & \((\pi^2-|x|^2)/\pi^2\)\\
Relative\newline tolerance \(\epsilon_k\) & \(10^{-3}\) for \(k\leq16\),\newline \(1.5\cdot10^{-3}\) otherwise & \(10^{-3}\) for \(k\leq16\),\newline \(1.5\cdot10^{-3}\) otherwise & \(10^{-4}\) for \(k\leq16\),\newline \(2\cdot10^{-4}\) otherwise\\
Initialization\newline scale \(S_x\) & \(S_x=\max\{1.2\sqrt{\lambda_i},8\}\) & \(S_x=\max\{1.2\sqrt{\lambda_i},16\}\) & \(S_x=\max\{1.2\sqrt{\lambda_i},5\}\)\\
Independent\newline forward check & No; finer Ritz model only & Finite differences and\newline enlarged Ritz models & Laplacian-eigenfunction\newline Galerkin solver\\
\bottomrule

\end{tabular}
\end{table}

\FloatBarrier
\subsection{Code, data, and reproduction of tables and figures}
\label{app:code-data}

The archive accompanying this paper, openly available at \url{https://doi.org/10.5281/zenodo.23288937}, contains the computational packages
\texttt{Example~1/}\path{example1_peanut.zip},
\texttt{Example~2/}\path{example2_lens.zip}, and
\texttt{Example~3/}\path{example3_graded.zip}, together with the additional graded-medium Ritz caches. Each package supplies its Python driver, README, requirements, and an output directory. The primary stored records are \path{out_peanut/results.json}, \path{out_lens/results.json}, and \path{out_graded/results.json}. Their \texttt{config} entries record the run settings; \texttt{modes} contains the accepted network parameters, coefficients, histories, and stopping reasons; \texttt{final} and \texttt{sweep} contain the reconstruction diagnostics. The corresponding \path{final_vs_n.csv}, \path{sweep_m.csv}, and \path{modes.csv} expose the principal tabulated values. The lens package additionally records pruning and forward verification, and the graded package records degree, cutoff, and independent-forward checks.

All tables and figures are computed from the stored networks and results of these runs. The consistency diagnostics in Table~\ref{tab:ritz-consistency-diagnostics} are a separate postprocessing of the saved networks and Ritz caches; their scripts and records are supplied in \path{anc/consistency_checks/}. The script \path{manuscript_figures.py} draws the control and residual plots and the finite-spectrum panel of Example~2 directly from the JSON records, with the residuals labeled according to \eqref{eq:ritz-relative-indicator}. The modewise plots show the representative eigenfunctions selected by the drivers, for instance $i\in\{1,3,5,6,11,13,15,16\}$ in Example~3. The package READMEs give the training, checkpoint, and postprocessing commands. Some variable names in the drivers and stored records differ from the notation of this paper; for instance, the keys \texttt{adjoint\_rel\_max} and \texttt{E\_adj} hold the Ritz-based residuals \eqref{eq:ritz-relative-indicator}, not continuous adjoint errors. The file \path{REPRODUCIBILITY.md} of the archive lists these names with their meaning. These scripts use Python with NumPy, SciPy, and Matplotlib; the precise package versions and hardware of the training environment are not recorded in the JSON configuration. No multiple-seed variability study or certified continuous strong-eigenpair error bound is supplied by these runs.

All fourteen tables can be regenerated by the exporter \path{anc/table_reproduction/export_tables.py}. It reads the archived JSON records and driver constants without rerunning training, writes full-precision CSV files and LaTeX rows to \path{tables/}, and checks the displayed values against the manuscript. The table bodies are included from these generated files.

\subsection{Assembly procedures and their sensitivity}
\label{app:assembly-diagnostics}

The value Gram of the accepted directions on $Q_\omega$ is assembled through a finite expansion in an orthonormal Legendre basis $\{B_r\}$ on $(0,T)$. For a direction $v_d$, the drivers approximate its temporal coefficients at each spatial quadrature point by Gauss quadrature and retain $0\leq r<L$:
\[
 \widehat v_{d,r}(x)=\sum_{\ell=1}^{n_t}w_\ell^t B_r(t_\ell)v_d(t_\ell,x),
 \qquad
 \widehat M_{de}=\sum_{r=0}^{L-1}\sum_q w_q^x
 \widehat v_{d,r}(x_q)\widehat v_{e,r}(x_q).
\]
A change in $L$ changes the time-projection truncation even if the temporal coefficients were integrated exactly. The reference-level spatially weighted coefficient array is stored in \texttt{float32}; its entries are subsequently converted to \texttt{float64} for the matrix products. That conversion does not recover digits lost when storing the array. Table~\ref{tab:assembly-settings} gives the settings shared by the three drivers and their spatial-rule arguments.

\begin{table}[!htbp]\centering
\small
\caption{Settings of the two assembly procedures. Here $n_t$ is the number of Gauss time nodes and $L$ is the number of retained Legendre components. Spatial-rule entries are the arguments used by the respective driver: angular/radial counts in Example~1, the node-density parameter on $\omega$ and the tensor-product Gauss count on $\Omega$ in Example~2, and radial/polar/azimuthal counts in Example~3. The last column is the storage precision of the intermediate coefficient array, before the Gram products.}
\label{tab:assembly-settings}
\setlength{\tabcolsep}{5pt}
\begin{tabular}{@{}llrrlll@{}}
\toprule
Example & level & $n_t$ & $L$ & $\omega$ rule & $\Omega$ rule & storage\\
\midrule
1 & assembly & 200 & 100 & $(1024,12)$ & $(512,48)$ & \texttt{float64}\\
1 & reference & 320 & 120 & $(2048,24)$ & $(768,64)$ & \texttt{float32}\\
2 & assembly & 200 & 100 & $48$ & $96$ & \texttt{float64}\\
2 & reference & 320 & 120 & $72$ & $144$ & \texttt{float32}\\
3 & assembly & 200 & 100 & $(10,28,56)$ & $(20,28,56)$ & \texttt{float64}\\
3 & reference & 320 & 120 & $(14,36,72)$ & $(28,36,72)$ & \texttt{float32}\\
\bottomrule

\end{tabular}
\end{table}

The endpoint functional $L(v_d)$ is evaluated separately on the indicated domain rule. Cross moments with Ritz modal fields use direct Gauss time quadrature and the observation-space rule rather than the same truncated Legendre Gram product. Control distances evaluated from Legendre coefficients also inherit the finite time-expansion truncation. Computing a distance directly from the difference field avoids cancellation between close Gram expressions, but does not certify the omitted temporal tail.

Consequently, the stored assembly/reference differences, labeled as quadrature differences in the output files, are denoted here by $\Delta_G^{\mathrm{asm}}$ and $\Delta_b^{\mathrm{asm}}$. The settings of Table~\ref{tab:assembly-settings} change more than quadrature alone. Isolating the contributions would require separate spatial/time quadrature refinements at fixed $L$ and precision, time-expansion refinements, and a common-precision comparison. These additional studies are not part of the stored runs. The small measured sensitivities therefore do not provide validated values of $\delta_{G,n,m,h}$ or $\delta_{b,n,m,h}$.

\FloatBarrier
\subsection{Terminal-operator consistency diagnostics}
\label{app:ritz-consistency-diagnostics}

To assess the distinction in \eqref{eq:ritz-consistency-comparison}, we postprocess selected saved displacement modes from Examples~1 and~3 without retraining. The strong residual $Ae_{j,R}-\lambda_{j,R}e_{j,R}$ is evaluated using analytic first and second derivatives of the polynomial Ritz basis, including the boundary factor and, in Example~3, the coefficient derivative in $A=-a\Delta-\nabla a\cdot\nabla$. Thus this check does not substitute the weak Ritz eigenvalue relation for the terminal operator.

For each mode, let $q$ be its archived error-assessment quadrature and let $v_k$ be the saved neural combination after the final accepted enrichment. Table~\ref{tab:ritz-consistency-diagnostics} compares
\[
 \widehat R_{k,R}^{q}:=\sqrt{2\widehat{\mathcal J}_{k,R}^{q}(v_k)},\qquad
 \widehat D_{k,R}^{q}:=
 \left(Q_\Omega^q[|Aw_{k,R}^0-z_{k,R}|^2]\right)^{1/2},
\]
with $\widehat E_{k,R}^{\mathrm{cons},q}$, the norm of the same quadrature residual vector after replacing only its terminal operator target $z_{k,R}$ by $Aw_{k,R}^0$. The neural field, all other targets, and the quadrature are unchanged. The two residual vectors therefore satisfy the quadrature analogue of the reverse triangle inequality with $\widehat D_{k,R}^{q}$. Under exact integration the consistent vector norm equals $\|v_k-\phi_{k,R}^{\mathrm{PDE}}\|_{\mathrm{LS},A}$; the values in the table are numerical quadrature evaluations, not certified continuous errors.

\begin{table}[!htbp]\centering
\small
\caption{Selected terminal-operator consistency diagnostics, using analytic derivatives of the Ritz basis and the same error-assessment quadrature for all three columns within each row. All modes shown are displacement modes, with $j=(k+1)/2$. These measurements do not bound the error of quadrature or of the terminal basis.}
\label{tab:ritz-consistency-diagnostics}
\setlength{\tabcolsep}{6pt}
\begin{tabular}{@{}rrr rrr@{}}
\toprule
Example & $k$ & $j$ & $\widehat R_{k,R}^{q}$ & $\widehat D_{k,R}^{q}$ & $\widehat E_{k,R}^{\mathrm{cons},q}$\\
\midrule
1 & 9 & 5 & $3.827\cdot10^{-3}$ & $9.535\cdot10^{-3}$ & $1.026\cdot10^{-2}$\\
3 & 1 & 1 & $1.106\cdot10^{-4}$ & $2.675\cdot10^{-4}$ & $2.894\cdot10^{-4}$\\
3 & 5 & 3 & $2.321\cdot10^{-4}$ & $1.908\cdot10^{-3}$ & $1.924\cdot10^{-3}$\\
3 & 15 & 8 & $1.743\cdot10^{-4}$ & $1.568\cdot10^{-3}$ & $1.580\cdot10^{-3}$\\
3 & 31 & 16 & $7.384\cdot10^{-4}$ & $2.580\cdot10^{-3}$ & $2.690\cdot10^{-3}$\\
\bottomrule

\end{tabular}
\end{table}

For Example~3, modes $5$ and $15$ have consistent residuals approximately eight and nine times their Ritz-target residuals. Hence an indicator near $10^{-4}$ need not imply an adjoint error of that size. Separately, strong eigenpair residuals for the first sixteen spatial modes were evaluated on the ball rules $(28,28,56)$ and $(36,36,72)$; their differences were below $10^{-11}$ in absolute value. For Example~1, the rules $(320,40)$, $(640,56)$, and $(1024,96)$ still show sensitivity for some modes near the boundary. These refinement observations are diagnostics and supply no validated upper bounds for $D_{k,R}$ or the continuous error in \eqref{eq:ritz-to-continuous-error-bound}. Eigenvalue stability or residual checks at a few points likewise do not control the strong $L^2$ eigenpair defect.

The postprocessing scripts and unrounded records for these additional checks are supplied in \path{anc/consistency_checks/}. They use the archived Ritz caches and saved network coefficients. No original network parameter, training history, or stored reconstruction result is changed by these checks.

\FloatBarrier
\subsection{Additional diagnostics for Example 1}
\label{app:example1-diagnostics}

Figure~\ref{fig:peanut-modewise} shows the relative Ritz-based least-squares residuals along the enrichment process for representative eigenfunctions and for both members of each terminal pair. The first accepted direction produces the principal reduction from the initial relative residual \(1\). Subsequent directions give smaller improvements, and several curves terminate after three steps because the stagnation criterion is activated. The different terminal lengths of the curves reflect the adaptive mode-dependent stopping rule and should not be interpreted as missing data.

\begin{figure}[!htbp]
\centering
\includegraphics[width=\textwidth]{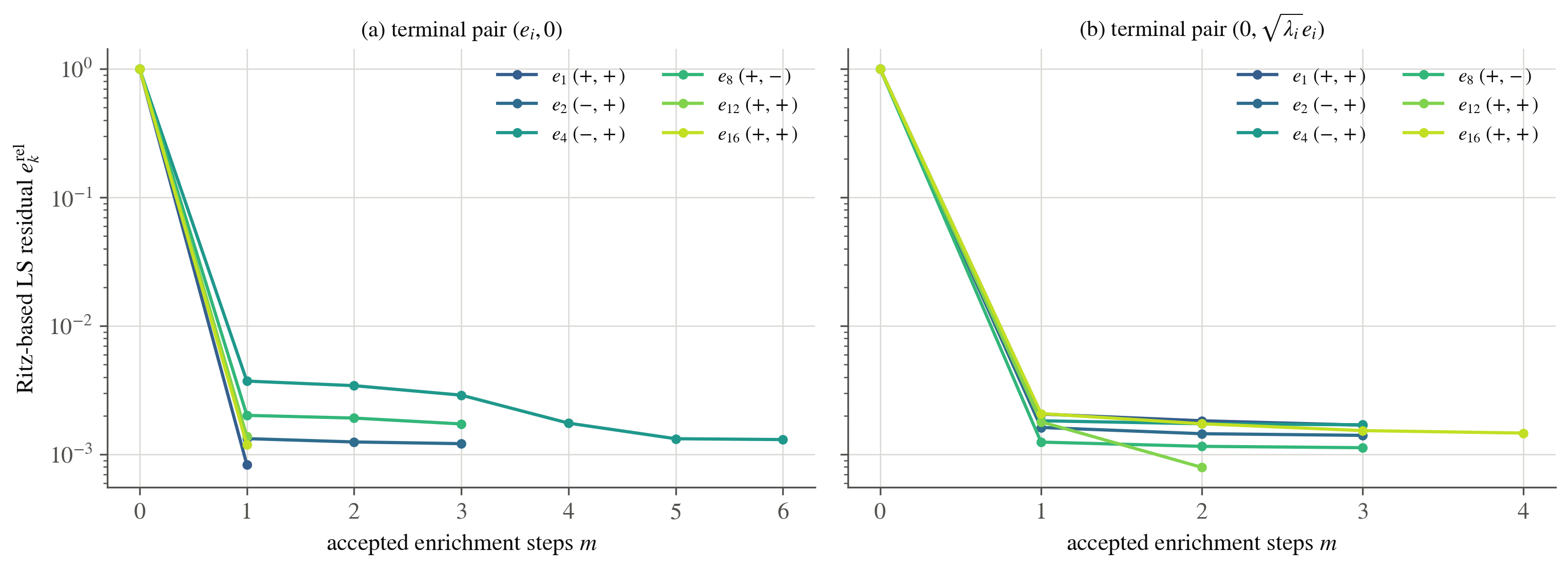}
\caption{Example 1: relative Ritz-based least-squares residual versus the number of accepted enrichment directions for representative modes. The left and right panels correspond to terminal data \((e_i,0)\) and \((0,\sqrt{\lambda_i}e_i)\), respectively. The signs in parentheses indicate the parity of \(e_i\) under \(x\mapsto-x\) and \(y\mapsto-y\). Each curve terminates at the actual accepted-enrichment count \(m_k\) of that mode.}
\label{fig:peanut-modewise}
\end{figure}

The dependence on the common enrichment budget is shown in Figure~\ref{fig:peanut-budget}. Increasing the budget from \(m=1\) to \(m=3\) reduces the maximum relative Ritz-based least-squares residual from \(4.68\cdot10^{-3}\) to \(3.99\cdot10^{-3}\); it remains unchanged for \(m\geq3\), because the mode attaining the maximum, \(k=30\), has already stopped through stagnation. The neural reconstruction error decreases from \(7.96\cdot10^{-5}\) at \(m=1\) to \(4.72\cdot10^{-5}\) at \(m=5\) and is \(4.77\cdot10^{-5}\) at \(m=6\). By contrast, the total control error remains approximately \(6.697\cdot10^{-2}\) and is visually indistinguishable from the finite-model truncation error. Thus, increasing the enrichment budget beyond the first few directions changes the neural component but not the dominant error at \(n=32\).

\begin{figure}[!htbp]
\centering
\begin{minipage}[t]{0.49\textwidth}
    \centering
    \includegraphics[width=\linewidth]{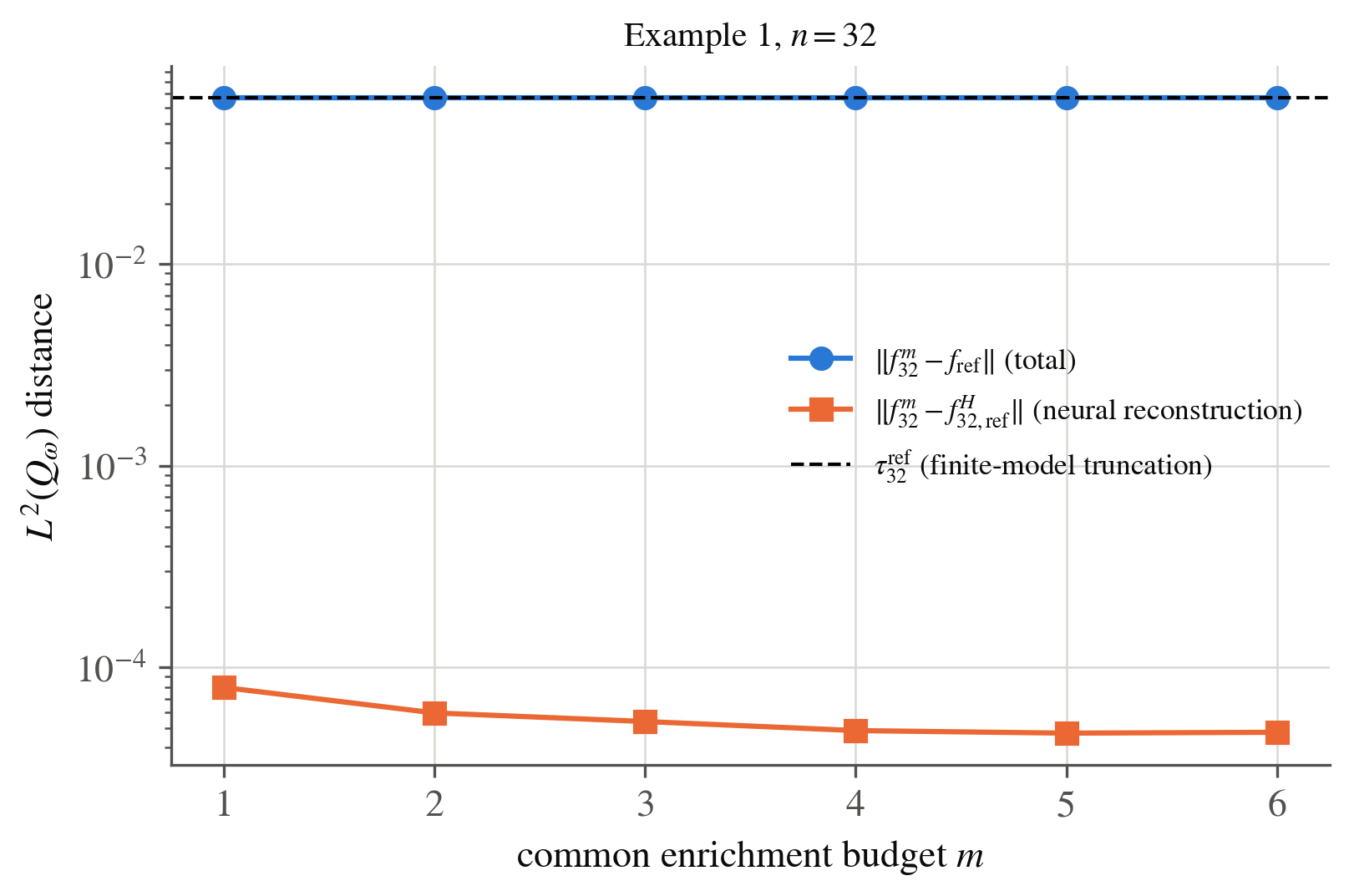}
\end{minipage}
\hfill
\begin{minipage}[t]{0.49\textwidth}
    \centering
    \includegraphics[width=\linewidth]{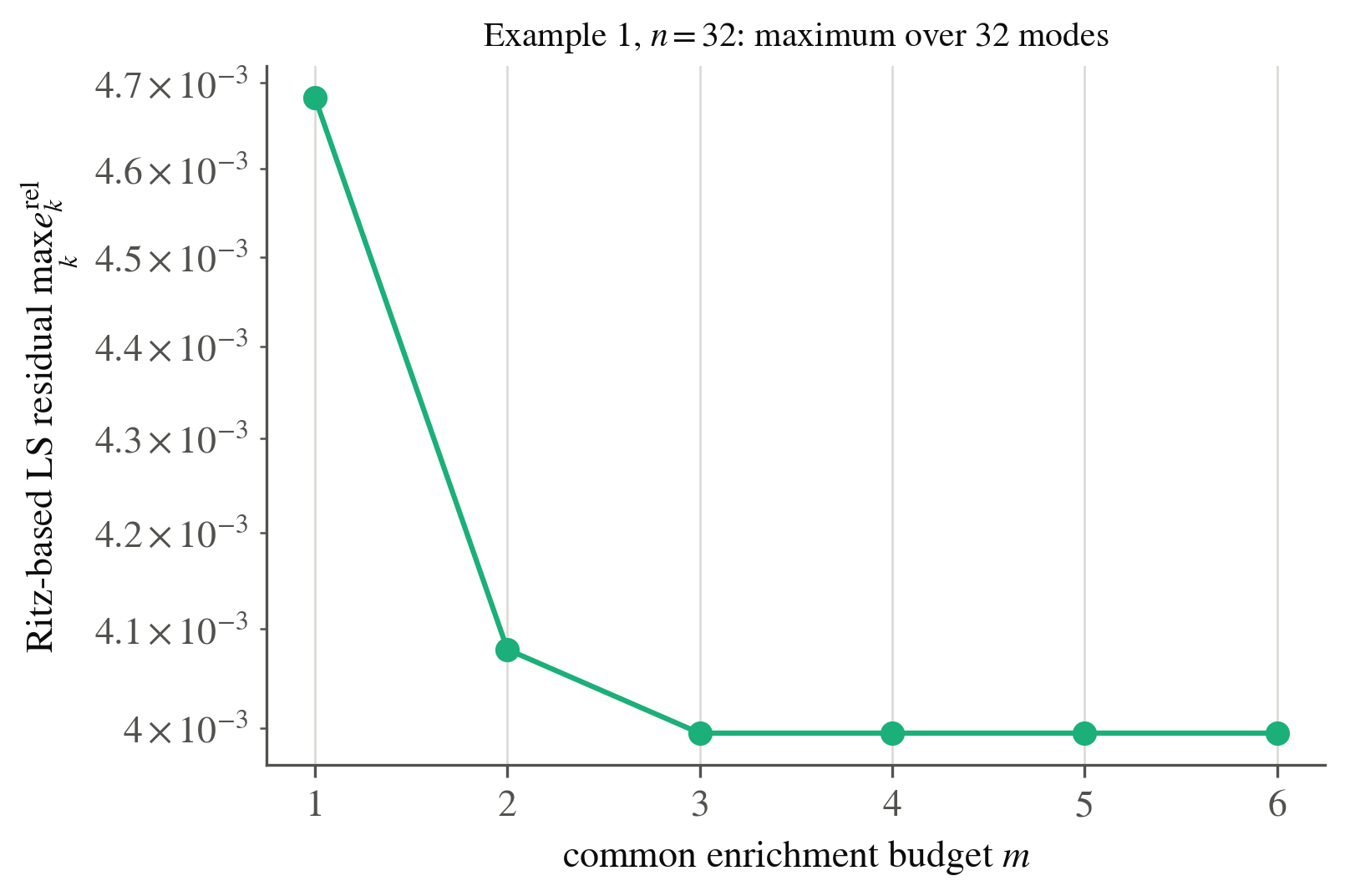}
\end{minipage}
\caption{Example 1: dependence on the common enrichment budget \(m\) at \(n=32\). Left: total control error, finite-model truncation error, and neural reconstruction error. Right: maximum relative Ritz-based least-squares residual over \(k=1,\ldots,32\). For a mode with \(m_k<m\), the frozen approximation \(\phi_k^{m_k}\) is used. The truncation curve is \(\tau_{32}^{\mathrm{ref}}\).}
\label{fig:peanut-budget}
\end{figure}

For completeness, Table~\ref{tab:peanut-modewise} lists the final relative residuals, accepted-enrichment counts, and stopping reasons of all 32 terminal modes. Twelve modes meet their prescribed tolerances, nineteen stop through stagnation, and one reaches the six-step budget. Here stagnation means that the validation residual after an accepted step exceeds \(0.8\) times the validation residual two accepted steps earlier, that is, the residual has decreased by less than \(20\%\) over two successive enrichment steps; it does not mean that the computed direction failed an acceptance gate.

\begin{table}[!htbp]
\centering
\small
\caption{Example 1: final modewise relative Ritz-based least-squares residuals at common budget \(m=6\). The stopping codes are \(T\) (tolerance reached), \(S\) (stagnation), and \(B\) (six-step budget reached).}
\label{tab:peanut-modewise}
\setlength{\tabcolsep}{5pt}
\begin{tabular}{@{}rrrr@{\qquad}rrrr@{}}
\toprule
\multicolumn{1}{@{}c}{$k$}
& \multicolumn{1}{c}{$e_k^{\mathrm{rel}}$}
& \multicolumn{1}{c}{$m_k$}
& \multicolumn{1}{c@{\qquad}}{stop}
& \multicolumn{1}{c}{$k$}
& \multicolumn{1}{c}{$e_k^{\mathrm{rel}}$}
& \multicolumn{1}{c}{$m_k$}
& \multicolumn{1}{c@{}}{stop}\\
\midrule
1 & \(8.32\cdot10^{-4}\) & 1 & \(T\) & 17 & \(1.41\cdot10^{-3}\) & 2 & \(T\)\\
2 & \(1.69\cdot10^{-3}\) & 3 & \(S\) & 18 & \(1.69\cdot10^{-3}\) & 3 & \(S\)\\
3 & \(1.21\cdot10^{-3}\) & 3 & \(S\) & 19 & \(1.51\cdot10^{-3}\) & 3 & \(S\)\\
4 & \(1.41\cdot10^{-3}\) & 3 & \(S\) & 20 & \(2.18\cdot10^{-3}\) & 3 & \(S\)\\
5 & \(1.21\cdot10^{-3}\) & 3 & \(S\) & 21 & \(1.56\cdot10^{-3}\) & 3 & \(S\)\\
6 & \(1.68\cdot10^{-3}\) & 3 & \(S\) & 22 & \(1.63\cdot10^{-3}\) & 3 & \(S\)\\
7 & \(1.31\cdot10^{-3}\) & 6 & \(B\) & 23 & \(1.38\cdot10^{-3}\) & 1 & \(T\)\\
8 & \(1.70\cdot10^{-3}\) & 3 & \(S\) & 24 & \(7.94\cdot10^{-4}\) & 2 & \(T\)\\
9 & \(4.83\cdot10^{-4}\) & 1 & \(T\) & 25 & \(9.63\cdot10^{-4}\) & 3 & \(T\)\\
10 & \(2.77\cdot10^{-4}\) & 1 & \(T\) & 26 & \(1.26\cdot10^{-3}\) & 1 & \(T\)\\
11 & \(8.21\cdot10^{-4}\) & 1 & \(T\) & 27 & \(2.04\cdot10^{-3}\) & 3 & \(S\)\\
12 & \(6.44\cdot10^{-4}\) & 5 & \(T\) & 28 & \(3.60\cdot10^{-3}\) & 3 & \(S\)\\
13 & \(1.98\cdot10^{-3}\) & 3 & \(S\) & 29 & \(2.70\cdot10^{-3}\) & 3 & \(S\)\\
14 & \(2.19\cdot10^{-3}\) & 3 & \(S\) & 30 & \(3.99\cdot10^{-3}\) & 3 & \(S\)\\
15 & \(1.73\cdot10^{-3}\) & 3 & \(S\) & 31 & \(1.19\cdot10^{-3}\) & 1 & \(T\)\\
16 & \(1.13\cdot10^{-3}\) & 3 & \(S\) & 32 & \(1.47\cdot10^{-3}\) & 4 & \(T\)\\
\bottomrule

\end{tabular}
\end{table}

The saturation of the maximum Ritz-based least-squares residual in Figure~\ref{fig:peanut-budget} is caused by mode \(k=30\), which attains the maximum for every budget and stops through stagnation after three accepted directions (Table~\ref{tab:peanut-modewise}). The reconstruction system remains well conditioned throughout: over \(1\leq m\leq6\), the smallest eigenvalue of the assembled \(32\times32\) Gram matrix remains approximately \(0.0734\), while its spectral condition number remains approximately \(21.04\). These observations are finite-dimensional diagnostics only and do not imply a uniform observability bound for the continuous problem.
\FloatBarrier

\subsection{Additional diagnostics for Example 2}

Figure~\ref{fig:lens-modewise} shows the relative Ritz-based least-squares residuals of representative modes during enrichment, computed before pruning; the final values recomputed after pruning are listed in Table~\ref{tab:lens-modewise} and differ from the plotted ones by at most \(1.2\%\). As in Example~1, the first accepted direction produces the dominant reduction from the initial normalized residual. In this example every one of the 32 trained modes reaches its prescribed relative tolerance. Low-frequency modes generally require one or two accepted directions, whereas some of the higher-frequency modes require three or more; mode $k=27$ requires five directions and determines the final maximum relative residual.

\begin{figure}[!htbp]
\centering
\includegraphics[width=\textwidth]{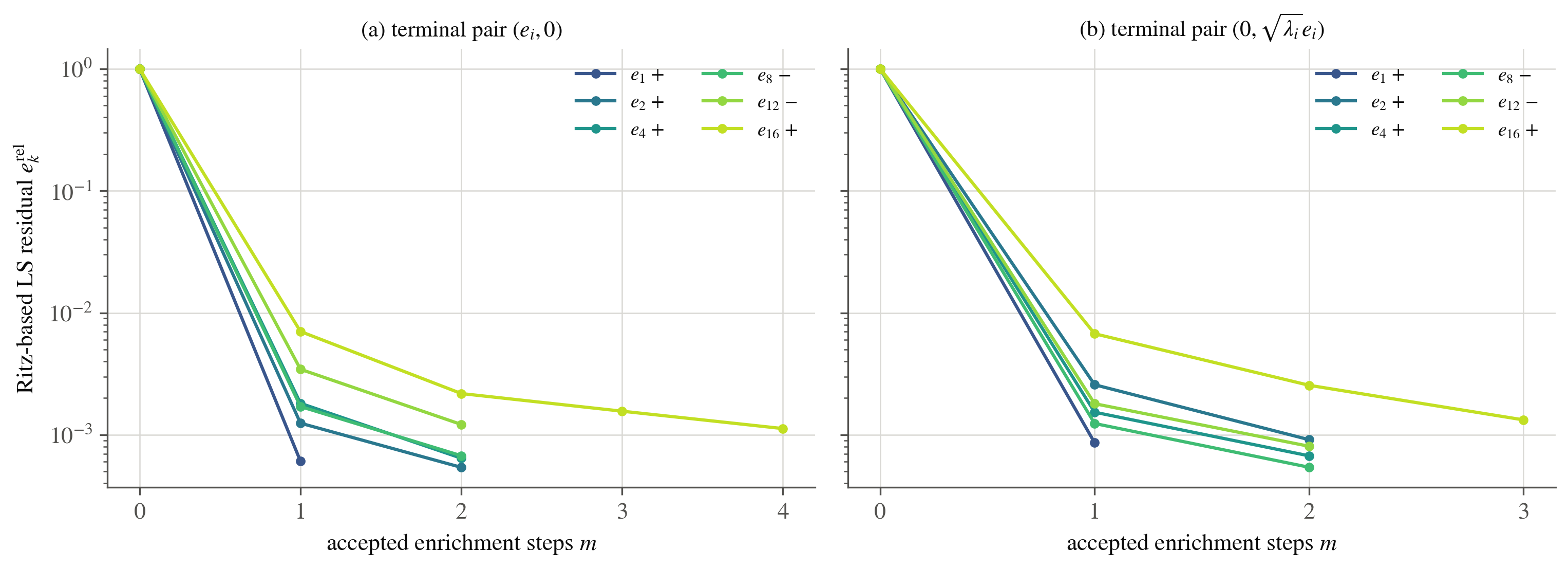}
\caption{Example 2: relative Ritz-based least-squares residual, computed before pruning, versus the number of accepted enrichment directions for representative modes. The left and right panels correspond to terminal data $(e_i,0)$ and $(0,\sqrt{\lambda_i}e_i)$, respectively. The signs indicate the symmetry of $e_i$ under interchange of the two spatial coordinates. Each curve terminates when the corresponding mode meets its stopping criterion.}
\label{fig:lens-modewise}
\end{figure}

Figure~\ref{fig:lens-budget} displays the dependence on the common enrichment budget at $n=32$. The maximum relative Ritz-based least-squares residual decreases from $7.03\cdot10^{-3}$ at $m=1$ to $2.77\cdot10^{-3}$ at $m=2$, $1.70\cdot10^{-3}$ at $m=3$, $1.54\cdot10^{-3}$ at $m=4$, and $1.46\cdot10^{-3}$ at $m=5$. No mode accepts a sixth direction, so the results for $m=5$ and $m=6$ coincide. The neural reconstruction error is between $3.86\cdot10^{-5}$ and $6.35\cdot10^{-5}$ over the entire sweep, whereas the total control error remains approximately $2.504\cdot10^{-2}$. Thus the visible reduction in the residual indicators does not change the conclusion that the measured control error is dominated by finite-model truncation.

\begin{figure}[!htbp]
\centering
\begin{minipage}[t]{0.49\textwidth}
    \centering
    \includegraphics[width=\linewidth]{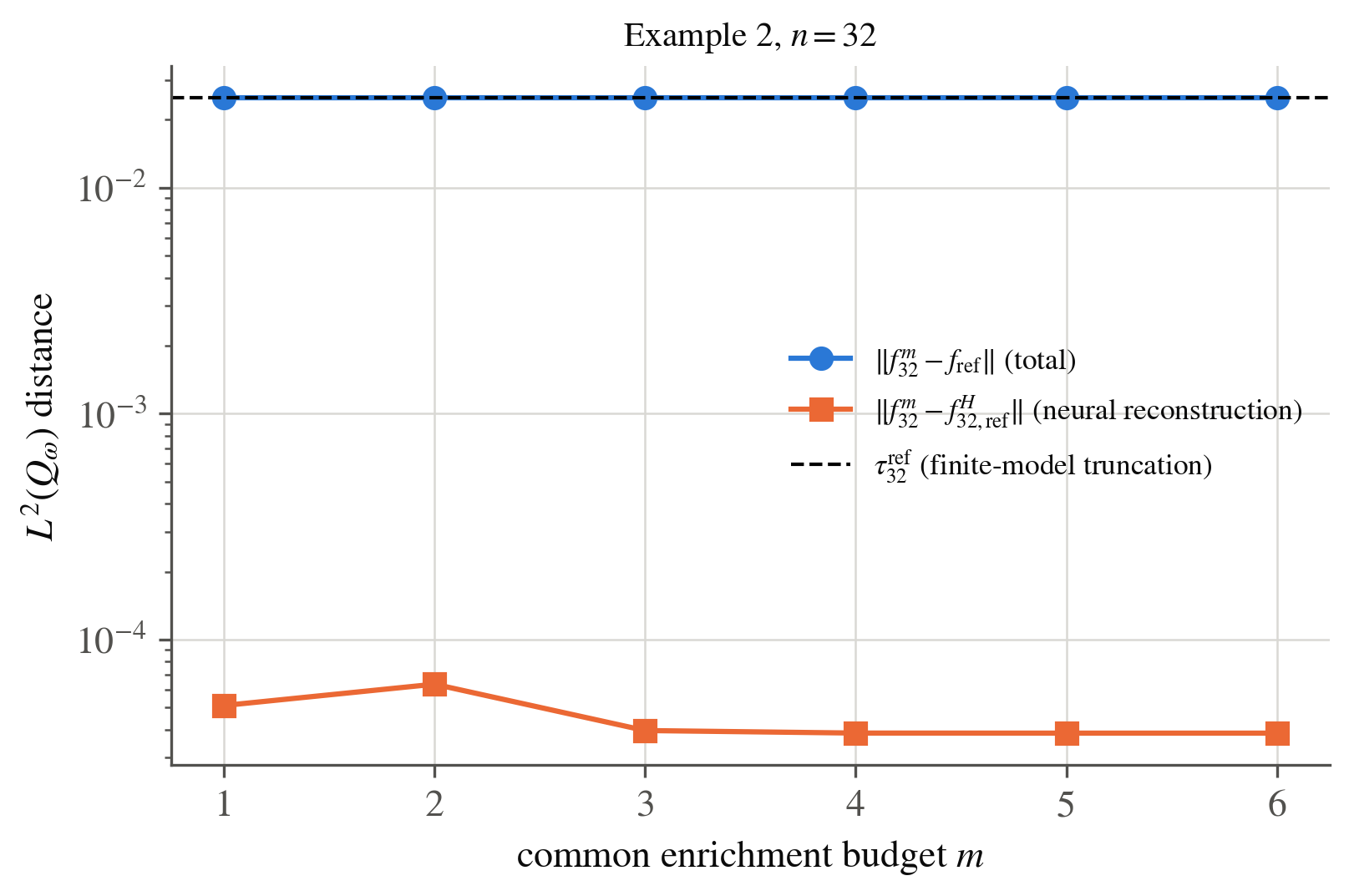}
\end{minipage}
\hfill
\begin{minipage}[t]{0.49\textwidth}
    \centering
    \includegraphics[width=\linewidth]{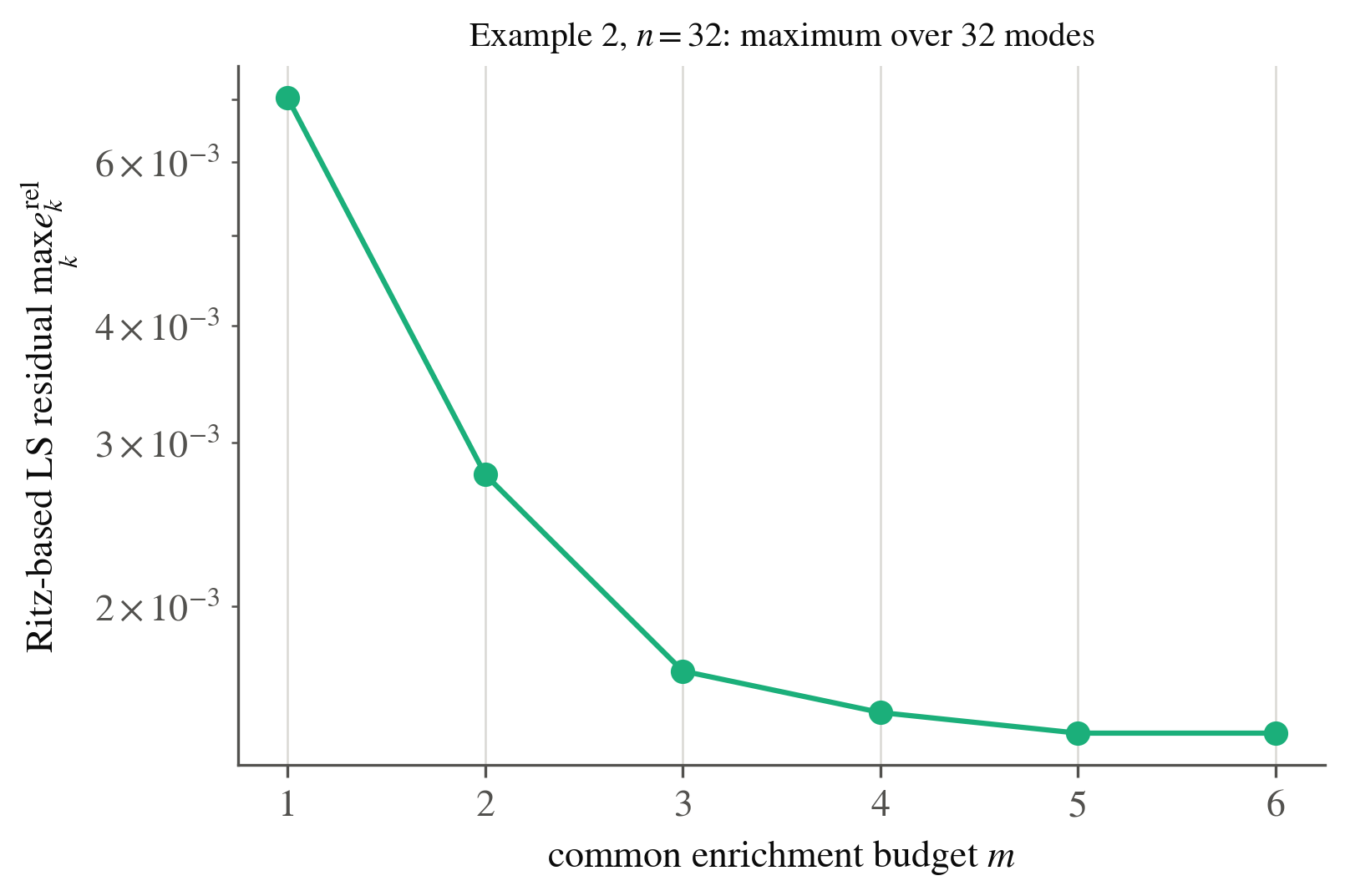}
\end{minipage}
\caption{Example 2: dependence on the common enrichment budget $m$ at $n=32$. Left: total control error, finite-model truncation error, and neural reconstruction error. Right: maximum resolved relative Ritz-based least-squares residual over $k=1,\ldots,32$. For a mode with $m_k<m$, the frozen approximation $\phi_k^{m_k}$ is used. The truncation curve is $\tau_{32}^{\mathrm{ref}}$.}
\label{fig:lens-budget}
\end{figure}

Table~\ref{tab:lens-modewise} gives the final resolved errors and accepted-enrichment counts. The errors are recomputed after pruning on the refined reference quadrature, rather than copied from the quadrature used during training. All values remain below the corresponding validation tolerance: $10^{-3}$ for $k\leq16$ and $1.5\cdot10^{-3}$ for $k>16$.

\begin{table}[!htbp]
\centering
\small
\caption{Example 2: final resolved relative Ritz-based least-squares residuals and accepted-enrichment counts at common budget $m=6$. All 32 modes stop because the prescribed tolerance is reached.}
\label{tab:lens-modewise}
\setlength{\tabcolsep}{7pt}
\begin{tabular}{@{}rrr@{\qquad\qquad}rrr@{}}
\toprule
\multicolumn{1}{@{}c}{$k$}
& \multicolumn{1}{c}{$e_k^{\mathrm{rel}}$}
& \multicolumn{1}{c@{\qquad\qquad}}{$m_k$}
& \multicolumn{1}{c}{$k$}
& \multicolumn{1}{c}{$e_k^{\mathrm{rel}}$}
& \multicolumn{1}{c@{}}{$m_k$}\\
\midrule
1 & $6.06\cdot10^{-4}$ & 1 & 17 & $1.09\cdot10^{-3}$ & 3\\
2 & $8.59\cdot10^{-4}$ & 1 & 18 & $1.37\cdot10^{-3}$ & 2\\
3 & $5.39\cdot10^{-4}$ & 2 & 19 & $1.30\cdot10^{-3}$ & 2\\
4 & $9.10\cdot10^{-4}$ & 2 & 20 & $7.05\cdot10^{-4}$ & 2\\
5 & $5.16\cdot10^{-4}$ & 2 & 21 & $9.83\cdot10^{-4}$ & 2\\
6 & $8.82\cdot10^{-4}$ & 1 & 22 & $1.04\cdot10^{-3}$ & 2\\
7 & $6.40\cdot10^{-4}$ & 2 & 23 & $1.20\cdot10^{-3}$ & 2\\
8 & $6.70\cdot10^{-4}$ & 2 & 24 & $8.03\cdot10^{-4}$ & 2\\
9 & $6.28\cdot10^{-4}$ & 2 & 25 & $1.30\cdot10^{-3}$ & 3\\
10 & $9.97\cdot10^{-4}$ & 3 & 26 & $1.14\cdot10^{-3}$ & 3\\
11 & $7.74\cdot10^{-4}$ & 2 & 27 & $1.46\cdot10^{-3}$ & 5\\
12 & $6.57\cdot10^{-4}$ & 2 & 28 & $1.26\cdot10^{-3}$ & 3\\
13 & $7.51\cdot10^{-4}$ & 2 & 29 & $1.36\cdot10^{-3}$ & 3\\
14 & $6.01\cdot10^{-4}$ & 2 & 30 & $1.43\cdot10^{-3}$ & 3\\
15 & $6.72\cdot10^{-4}$ & 2 & 31 & $1.12\cdot10^{-3}$ & 4\\
16 & $5.41\cdot10^{-4}$ & 2 & 32 & $1.31\cdot10^{-3}$ & 3\\
\bottomrule

\end{tabular}
\end{table}

The finite-reference quantities entering the perturbation analysis are collected in Table~\ref{tab:lens-finite-diagnostics}. Let
\[
    \alpha_{444}:=\sqrt{\lambda_{\min}(G_{444})}\approx0.7624,
    \qquad
    \frac{\alpha_{444}}{2}\approx0.3812,
    \qquad
    \frac{\alpha_{444}^2}{8}\approx0.0727.
\]
For every tested $n$, the directly measured synthesis perturbation is below $\alpha_{444}/2$, and the Gram assembly sensitivity is below $\alpha_{444}^2/8$. These comparisons are diagnostics within the 444-mode reference model. They do not verify the hypotheses of Theorem~\ref{thm:fully-discrete-neural-hum}: $\alpha_{444}$ is not a certified infinite-dimensional observability constant, $C_{R,n}^{\mathrm{emp}}$ is only the largest observed ratio of a neural-to-Ritz observation distance to a Ritz-based residual and is not a certified continuous embedding constant, and $\Delta_G^{\mathrm{asm}}$ and $\Delta_b^{\mathrm{asm}}$ compare the two complete assembly levels of Appendix~\ref{app:assembly-diagnostics}, rather than exact and approximate integrals. No exact continuous residual-capture or eigenpair-consistency condition is verified by this table.

\begin{table}[!htbp]
\centering
\scriptsize
\caption{Example 2: perturbation quantities measured in the 444-mode finite reference model. Here $E_n^{R,\mathrm{res}}=\max_{k\leq n}\sqrt{2\widehat{\mathcal J}_{k,R}^{\mathrm{err}}(\widetilde\phi_k^{m_k,h})}$ is the absolute Ritz-based residual, $C_{R,n}^{\mathrm{emp}}$ is the largest observation-distance-to-residual ratio, and $\Delta_G^{\mathrm{asm}},\Delta_b^{\mathrm{asm}}$ compare the assembly and reference procedures, including the time-expansion and precision changes in Table~\ref{tab:assembly-settings}. The synthesis reference is $\Phi_{n,R}$.}
\label{tab:lens-finite-diagnostics}
\setlength{\tabcolsep}{3.5pt}
\begin{tabular}{@{}rrrrrrrr@{}}
\toprule
\multicolumn{1}{@{}c}{$n$}
& \multicolumn{1}{c}{$E_n^{R,\mathrm{res}}$}
& \multicolumn{1}{c}{$\sqrt n E_n^{R,\mathrm{res}}$}
& \multicolumn{1}{c}{\makecell{$\max_{k\leq n}\|\phi_{k,R}-\widetilde\phi_k^{m_k,h}\|$\\in $L^2(Q_\omega)$}}
& \multicolumn{1}{c}{$C_{R,n}^{\mathrm{emp}}$}
& \multicolumn{1}{c}{$\|\widetilde\Phi_n-\Phi_{n,R}\|$}
& \multicolumn{1}{c}{$\Delta_G^{\mathrm{asm}}$}
& \multicolumn{1}{c@{}}{$\Delta_b^{\mathrm{asm}}$}\\
\midrule
4 & $4.55\cdot10^{-3}$ & $9.09\cdot10^{-3}$ & $5.46\cdot10^{-5}$ & $1.20\cdot10^{-2}$ & $5.47\cdot10^{-5}$ & $6.8\cdot10^{-10}$ & $1.3\cdot10^{-15}$\\
8 & $5.32\cdot10^{-3}$ & $1.51\cdot10^{-2}$ & $5.46\cdot10^{-5}$ & $1.20\cdot10^{-2}$ & $5.91\cdot10^{-5}$ & $8.8\cdot10^{-10}$ & $1.4\cdot10^{-15}$\\
16 & $9.62\cdot10^{-3}$ & $3.85\cdot10^{-2}$ & $1.50\cdot10^{-4}$ & $1.56\cdot10^{-2}$ & $1.51\cdot10^{-4}$ & $1.1\cdot10^{-9}$ & $1.4\cdot10^{-15}$\\
32 & $3.40\cdot10^{-2}$ & $1.92\cdot10^{-1}$ & $6.14\cdot10^{-4}$ & $2.00\cdot10^{-2}$ & $6.23\cdot10^{-4}$ & $1.6\cdot10^{-9}$ & $1.7\cdot10^{-15}$\\
\bottomrule

\end{tabular}
\end{table}

Table~\ref{tab:lens-forward-checks} reports the forward checks in enlarged Ritz spaces and with the independent flux-form finite-difference solver. The network and exact finite-space controls give nearly identical residuals at each $n$. At $n=32$, the extrapolated finite-difference value $0.028687$ differs from the original Ritz value $0.028605$ by approximately $0.29\%$. In contrast, the 444-mode reference control, which has essentially zero residual inside its defining Ritz model, leaves an extrapolated residual of approximately $2.01\cdot10^{-3}$ in the independent solve. This discrepancy is consistent with finite-reference limitations, but also includes errors from the independent spatial and temporal discretizations. Measured against $f_{\mathrm{ref}}^{\mathrm{big}}$, the error of $f_{32}^m$ is $2.5106\cdot10^{-2}$, within $0.3\%$ of the value $2.5040\cdot10^{-2}$ measured against the 444-mode reference.

\begin{table}[!htbp]
\centering
\footnotesize
\caption{Example 2: terminal energy residuals $R_T$ from enlarged Ritz models and an independent flux-form finite-difference solver. The last column gives the distance to the HUM control $f_{\mathrm{ref}}^{\mathrm{big}}$ of the enlarged $\lambda\leq1000$ Ritz system with 1844 adjoint modes.}
\label{tab:lens-forward-checks}
\setlength{\tabcolsep}{4pt}
\begin{tabular}{@{}lrrrrrrrr@{}}
\toprule
& \multicolumn{3}{c}{Ritz, $\lambda\leq$} & \multicolumn{4}{c}{finite differences} & \\
\cmidrule(lr){2-4}\cmidrule(lr){5-8}
\multicolumn{1}{@{}c}{control}
& \multicolumn{1}{c}{$250$}
& \multicolumn{1}{c}{$500$}
& \multicolumn{1}{c}{$1000$}
& \multicolumn{1}{c}{$128^2$}
& \multicolumn{1}{c}{$256^2$}
& \multicolumn{1}{c}{$512^2$}
& \multicolumn{1}{c}{extrap.}
& \multicolumn{1}{c@{}}{$\|f-f_{\mathrm{ref}}^{\mathrm{big}}\|$}\\
\midrule
$f_{4}^m$ & 0.19171 & 0.19172 & 0.19172 & 0.19169 & 0.19172 & 0.19172 & 0.19173 & 0.15309\\
$f_{8}^m$ & 0.13008 & 0.13009 & 0.13009 & 0.12995 & 0.13006 & 0.13009 & 0.13011 & 0.11131\\
$f_{16}^m$ & 0.086015 & 0.086031 & 0.086038 & 0.086146 & 0.086069 & 0.086048 & 0.086028 & 0.070602\\
$f_{32}^m$ & 0.028605 & 0.028652 & 0.028670 & 0.028618 & 0.028667 & 0.028677 & 0.028687 & 0.025106\\
$f_{4}^H$ & 0.19171 & 0.19172 & 0.19172 & 0.19169 & 0.19171 & 0.19172 & 0.19173 & 0.15309\\
$f_{8}^H$ & 0.13008 & 0.13009 & 0.13009 & 0.12995 & 0.13006 & 0.13009 & 0.13011 & 0.11131\\
$f_{16}^H$ & 0.086016 & 0.086032 & 0.086038 & 0.086146 & 0.086069 & 0.086048 & 0.086029 & 0.070602\\
$f_{32}^H$ & 0.028601 & 0.028649 & 0.028667 & 0.028616 & 0.028664 & 0.028674 & 0.028683 & 0.025105\\
$f_{\mathrm{ref}}$ & $1.98\cdot10^{-12}$ & 0.0016115 & 0.0019140 & 0.0025326 & 0.0021392 & 0.0020760 & 0.0020116 & 0.0018106\\
no control & 0.43750 & 0.43750 & 0.43750 & 0.43746 & 0.43749 & 0.43750 & 0.43750 & 0.51314\\
\bottomrule

\end{tabular}
\end{table}
\FloatBarrier

\subsection{Additional diagnostics for Example 3}

Figure~\ref{fig:graded-modewise} shows the enrichment histories for representative modes on the three-dimensional ball. The first accepted direction again produces the largest decrease, but several higher modes continue to improve through four, five, or six accepted directions. The parity labels refer to reflection in the $y$- and $z$-coordinates, respectively. Because $a$, $\Omega$, and $\omega$ are invariant under rotations about the $x_1$-axis, several eigenvalues are double, for instance $\lambda_6=\lambda_7\approx2.6525$, $\lambda_8=\lambda_9\approx2.6579$, $\lambda_{12}=\lambda_{13}\approx3.8958$, and $\lambda_{14}=\lambda_{15}\approx3.9000$, with computed differences below $10^{-8}$. The two members of each such pair lie in different parity classes; hence the parity-adapted Ritz computation selects a well-defined eigenfunction for each index, and the corresponding terminal modes are trained independently.

\begin{figure}[!htbp]
\centering
\includegraphics[width=\textwidth]{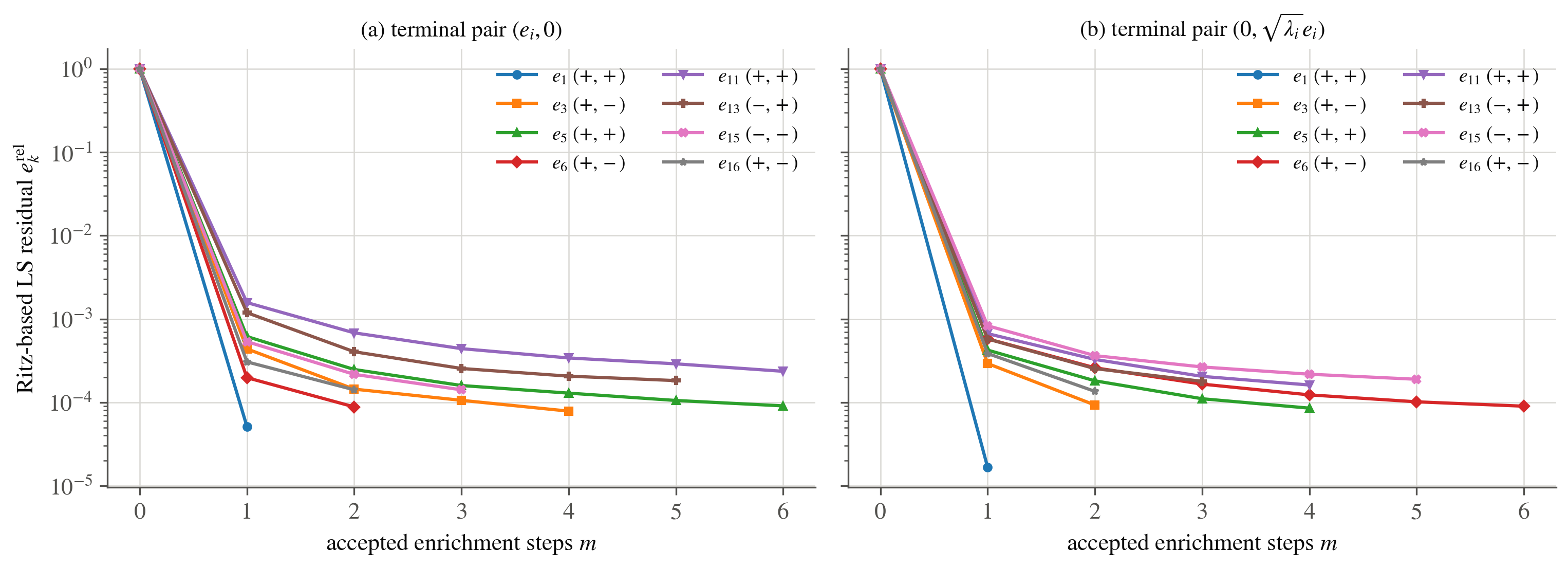}
\caption{Example 3: relative Ritz-based least-squares residual versus the number of accepted enrichment directions for representative modes. The left and right panels correspond to terminal data $(e_i,0)$ and $(0,\sqrt{\lambda_i}e_i)$, respectively. The two signs indicate parity under $y\mapsto-y$ and $z\mapsto-z$. Each curve terminates at the actual accepted-enrichment count $m_k$ of that mode.}
\label{fig:graded-modewise}
\end{figure}

At $n=32$, increasing the common budget from $m=1$ to $m=6$ reduces the maximum relative Ritz-based least-squares residual from $1.57\cdot10^{-3}$ to $2.36\cdot10^{-4}$; see Figure~\ref{fig:graded-budget}. The number of modes satisfying their relative targets increases from $5$ at $m=1$ to $17,21,26,29$, and $31$ for $m=2,3,4,5$, and $6$, respectively. Nevertheless, the neural reconstruction error changes only from $4.30\cdot10^{-6}$ to $3.00\cdot10^{-6}$, while the total control error remains approximately $2.705\cdot10^{-2}$. Mode $k=21$ determines the maximum at every budget and remains slightly above its target after reaching the six-step limit.

\begin{figure}[!htbp]
\centering
\begin{minipage}[t]{0.49\textwidth}
    \centering
    \includegraphics[width=\linewidth]{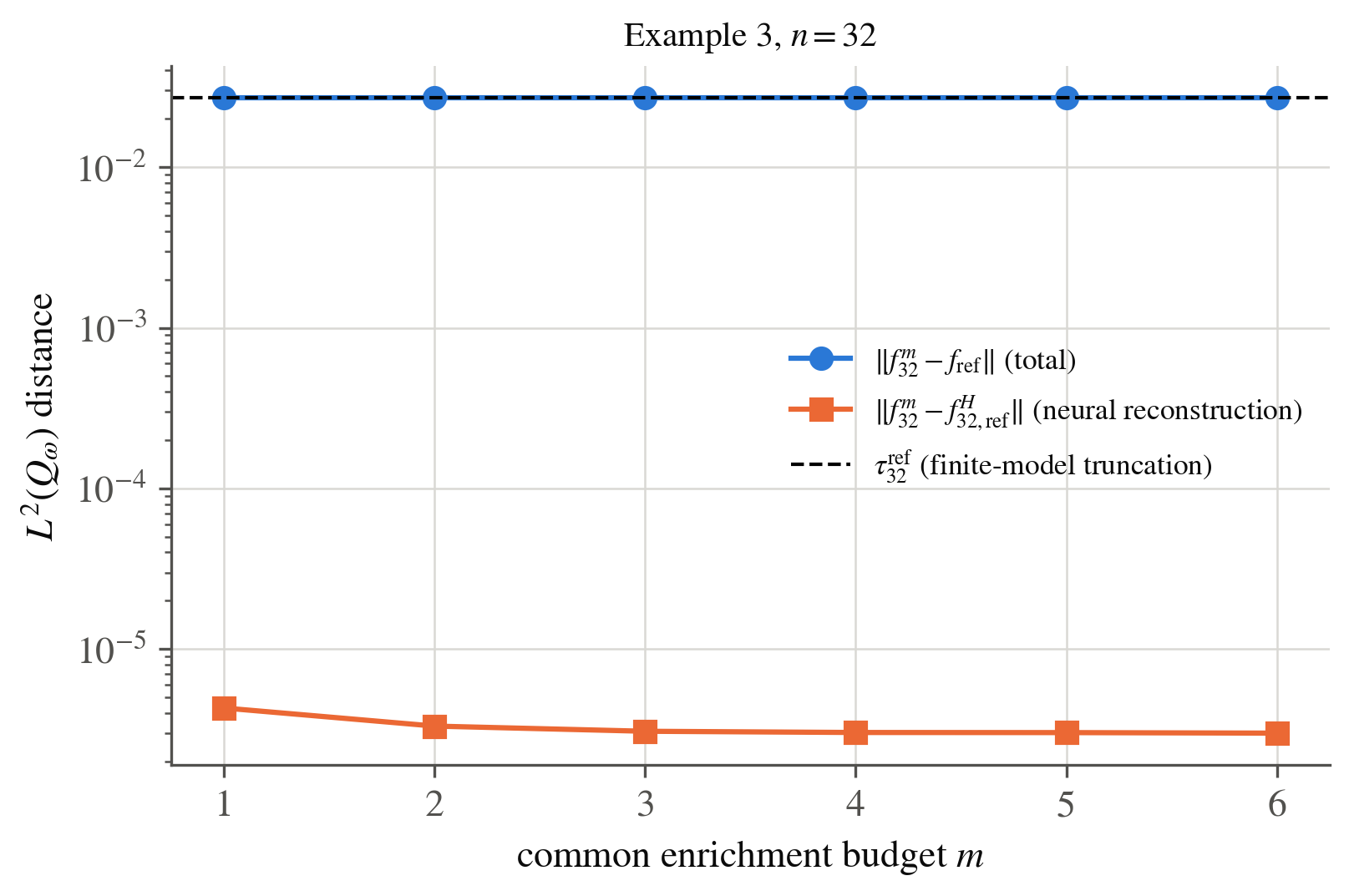}
\end{minipage}
\hfill
\begin{minipage}[t]{0.49\textwidth}
    \centering
    \includegraphics[width=\linewidth]{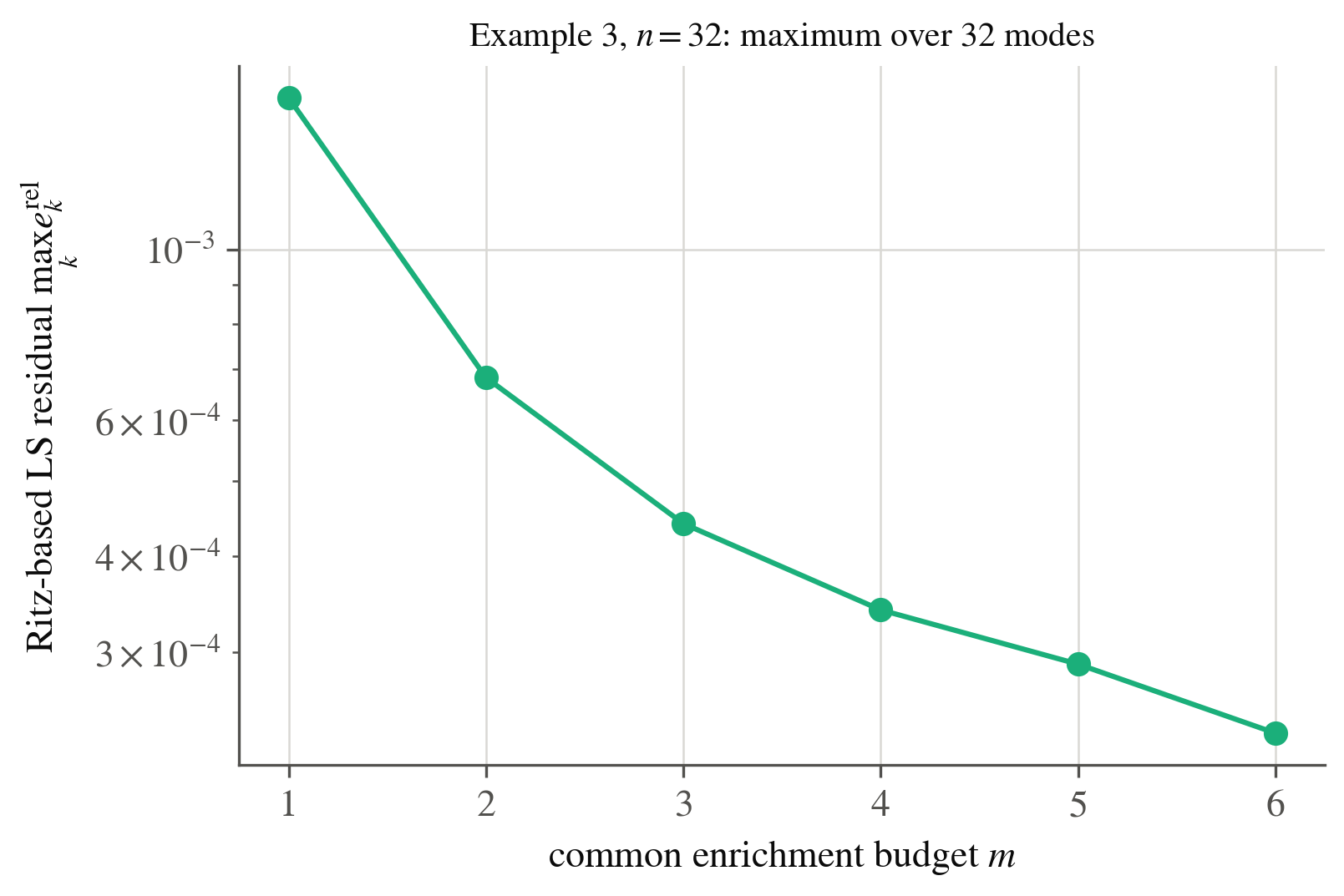}
\end{minipage}
\caption{Example 3: dependence on the common enrichment budget $m$ at $n=32$. Left: total control error, finite-model truncation error, and neural reconstruction error. Right: maximum relative Ritz-based least-squares residual over $k=1,\ldots,32$. For a mode with $m_k<m$, the frozen approximation $\phi_k^{m_k}$ is used. The truncation curve is $\tau_{32}^{\mathrm{ref}}$.}
\label{fig:graded-budget}
\end{figure}

Table~\ref{tab:graded-modewise} lists the final modewise residual indicators. Modes $1$--$16$ have target $10^{-4}$ and modes $17$--$32$ have target $2\cdot10^{-4}$. Thirty-one modes satisfy these targets. Mode $k=21$ stops at the six-step budget with relative residual $2.36\cdot10^{-4}$; this unmet tolerance is reported as is.

\begin{table}[!htbp]
\centering
\small
\caption{Example 3: final relative Ritz-based least-squares residuals and accepted-enrichment counts at common budget $m=6$. The stopping codes are $T$ (tolerance reached) and $B$ (six-step budget reached).}
\label{tab:graded-modewise}
\setlength{\tabcolsep}{5pt}
\begin{tabular}{@{}rrrr@{\qquad}rrrr@{}}
\toprule
\multicolumn{1}{@{}c}{$k$}
& \multicolumn{1}{c}{$e_k^{\mathrm{rel}}$}
& \multicolumn{1}{c}{$m_k$}
& \multicolumn{1}{c@{\qquad}}{stop}
& \multicolumn{1}{c}{$k$}
& \multicolumn{1}{c}{$e_k^{\mathrm{rel}}$}
& \multicolumn{1}{c}{$m_k$}
& \multicolumn{1}{c@{}}{stop}\\
\midrule
1 & $5.13\cdot10^{-5}$ & 1 & $T$ & 17 & $9.49\cdot10^{-5}$ & 2 & $T$\\
2 & $1.66\cdot10^{-5}$ & 1 & $T$ & 18 & $1.26\cdot10^{-4}$ & 2 & $T$\\
3 & $5.23\cdot10^{-5}$ & 1 & $T$ & 19 & $1.84\cdot10^{-4}$ & 2 & $T$\\
4 & $7.14\cdot10^{-5}$ & 1 & $T$ & 20 & $1.66\cdot10^{-4}$ & 2 & $T$\\
5 & $7.84\cdot10^{-5}$ & 4 & $T$ & 21 & $2.36\cdot10^{-4}$ & 6 & $B$\\
6 & $9.28\cdot10^{-5}$ & 2 & $T$ & 22 & $1.61\cdot10^{-4}$ & 4 & $T$\\
7 & $6.72\cdot10^{-5}$ & 1 & $T$ & 23 & $1.70\cdot10^{-4}$ & 4 & $T$\\
8 & $3.83\cdot10^{-5}$ & 2 & $T$ & 24 & $1.75\cdot10^{-4}$ & 5 & $T$\\
9 & $9.07\cdot10^{-5}$ & 6 & $T$ & 25 & $1.82\cdot10^{-4}$ & 5 & $T$\\
10 & $8.52\cdot10^{-5}$ & 4 & $T$ & 26 & $1.79\cdot10^{-4}$ & 3 & $T$\\
11 & $8.81\cdot10^{-5}$ & 2 & $T$ & 27 & $1.55\cdot10^{-4}$ & 2 & $T$\\
12 & $8.99\cdot10^{-5}$ & 6 & $T$ & 28 & $1.70\cdot10^{-4}$ & 3 & $T$\\
13 & $8.73\cdot10^{-5}$ & 4 & $T$ & 29 & $1.41\cdot10^{-4}$ & 3 & $T$\\
14 & $8.12\cdot10^{-5}$ & 2 & $T$ & 30 & $1.89\cdot10^{-4}$ & 5 & $T$\\
15 & $4.37\cdot10^{-5}$ & 2 & $T$ & 31 & $1.41\cdot10^{-4}$ & 2 & $T$\\
16 & $7.50\cdot10^{-5}$ & 3 & $T$ & 32 & $1.35\cdot10^{-4}$ & 2 & $T$\\
\bottomrule

\end{tabular}
\end{table}

The sensitivity to the polynomial Ritz degree is reported in Table~\ref{tab:graded-degree-check}. At the fixed spectral cutoff $\lambda\leq40$, increasing the degree from $28$ to $32$ changes the reference control by only $4.60\cdot10^{-7}$. The corresponding change in the 32-mode exact finite-space reconstruction is $1.33\cdot10^{-7}$. This degree sensitivity is much smaller than the change caused by enlarging the spectral cutoff, which is reported in the main text. It therefore indicates resolution of the selected Ritz space, but it is not an estimate of the distance to the continuous HUM control.

\begin{table}[!htbp]
\centering
\small
\caption{Example 3: sensitivity of the $\lambda\leq40$ reference model to the polynomial Ritz degree. Differences are measured against degree $32$. Here $d_{\mathrm{ref}}(D)=\|f_{\mathrm{ref}}^{(D)}-f_{\mathrm{ref}}^{(32)}\|$ and $d_{32}^{H}(D)=\|f_{32,\mathrm{ref}}^{H,(D)}-f_{32,\mathrm{ref}}^{H,(32)}\|$.}
\label{tab:graded-degree-check}
\setlength{\tabcolsep}{4pt}
\begin{tabular}{@{}rrrrrrr@{}}
\toprule
\multicolumn{1}{@{}c}{$D$}
& \multicolumn{1}{c}{eigenpairs}
& \multicolumn{1}{c}{$\lambda_{\min}(G)$}
& \multicolumn{1}{c}{$\|f_{\mathrm{ref}}^{(D)}\|$}
& \multicolumn{1}{c}{$d_{\mathrm{ref}}(D)$}
& \multicolumn{1}{c}{$\tau_{32}^{\mathrm{ref}}$}
& \multicolumn{1}{c@{}}{$d_{32}^{H}(D)$}\\
\midrule
24 & 657 & $0.3287$ & $7.55855\cdot10^{-2}$ & $1.65\cdot10^{-5}$ & $2.70456\cdot10^{-2}$ & $6.59\cdot10^{-8}$\\
28 & 657 & $0.3287$ & $7.55853\cdot10^{-2}$ & $4.60\cdot10^{-7}$ & $2.70451\cdot10^{-2}$ & $1.33\cdot10^{-7}$\\
32 & 657 & $0.3287$ & $7.55853\cdot10^{-2}$ & --- & $2.70451\cdot10^{-2}$ & ---\\
\bottomrule

\end{tabular}
\end{table}
\FloatBarrier

Finally, Table~\ref{tab:graded-forward-checks} gives the independent three-dimensional forward checks. The independent solver uses Dirichlet eigenfunctions of $-\Delta$ on the ball rather than the polynomial Ritz basis used to construct the controls. The network controls and $f_{32,\mathrm{ref}}^H$ are stable as the cutoff $\mu$ of the independent solver increases: between $\mu\leq150$ and $\mu\leq200$, their terminal residuals change by at most $4\cdot10^{-6}$. The reference controls are more sensitive. For instance, $f_{\mathrm{ref}}(60)$ has residual $5.4\cdot10^{-17}$ in its own Ritz model but $1.8956\cdot10^{-3}$ in the independent solve with $\mu\leq200$, and the residual of $f_{\mathrm{ref}}(40)$ reaches $2.7063\cdot10^{-3}$. Hence the internal near-zero Ritz residual of a reference control should not be interpreted as a continuous-PDE residual.

\begin{table}[!htbp]
\centering
\small
\caption{Example 3: terminal energy residuals $R_T$ in a Ritz model and in an independent forward solver based on the Dirichlet eigenfunctions of $-\Delta$ on the ball with eigenvalue cutoff $\mu$. The Ritz column uses the $\lambda\leq60$, degree-$32$ model for the reference controls $f_{\mathrm{ref}}(\Lambda)$ and the original $\lambda\leq40$, degree-$28$ model for the other controls. The numbers of independent basis functions for $\mu\leq60,100,150,200$ are $880,1975,3702,5728$, respectively.}
\label{tab:graded-forward-checks}
\setlength{\tabcolsep}{3pt}
\begin{tabular}{@{}lrrrrrr@{}}
\toprule
\multicolumn{1}{@{}c}{control}
& \multicolumn{1}{c}{$\|f\|_{L^2(Q_\omega)}$}
& \multicolumn{1}{c}{Ritz model}
& \multicolumn{1}{c}{$\mu\leq60$}
& \multicolumn{1}{c}{$\mu\leq100$}
& \multicolumn{1}{c}{$\mu\leq150$}
& \multicolumn{1}{c@{}}{$\mu\leq200$}\\
\midrule
no control & $0$ & $7.2490\cdot10^{-2}$ & $7.2490\cdot10^{-2}$ & $7.2490\cdot10^{-2}$ & $7.2490\cdot10^{-2}$ & $7.2490\cdot10^{-2}$\\
$f_{\mathrm{ref}}(20)$ & $7.5458\cdot10^{-2}$ & $4.6428\cdot10^{-3}$ & $4.3866\cdot10^{-3}$ & $4.7256\cdot10^{-3}$ & $4.7979\cdot10^{-3}$ & $4.8347\cdot10^{-3}$\\
$f_{\mathrm{ref}}(30)$ & $7.5535\cdot10^{-2}$ & $3.5345\cdot10^{-3}$ & $3.0070\cdot10^{-3}$ & $3.6625\cdot10^{-3}$ & $3.7619\cdot10^{-3}$ & $3.8139\cdot10^{-3}$\\
$f_{\mathrm{ref}}(40)$ & $7.5585\cdot10^{-2}$ & $2.2466\cdot10^{-3}$ & $1.0353\cdot10^{-3}$ & $2.4653\cdot10^{-3}$ & $2.6294\cdot10^{-3}$ & $2.7063\cdot10^{-3}$\\
$f_{\mathrm{ref}}(50)$ & $7.5604\cdot10^{-2}$ & $1.4272\cdot10^{-3}$ & $5.5201\cdot10^{-4}$ & $1.8131\cdot10^{-3}$ & $2.0647\cdot10^{-3}$ & $2.1655\cdot10^{-3}$\\
$f_{\mathrm{ref}}(60)$ & $7.5615\cdot10^{-2}$ & $5.4042\cdot10^{-17}$ & $6.5728\cdot10^{-4}$ & $1.3576\cdot10^{-3}$ & $1.7729\cdot10^{-3}$ & $1.8956\cdot10^{-3}$\\
$f_{32}^{H}$ & $7.0581\cdot10^{-2}$ & $2.6102\cdot10^{-2}$ & $2.6102\cdot10^{-2}$ & $2.6129\cdot10^{-2}$ & $2.6135\cdot10^{-2}$ & $2.6139\cdot10^{-2}$\\
$f_4^m$ & $5.1380\cdot10^{-2}$ & $5.8390\cdot10^{-2}$ & $5.8390\cdot10^{-2}$ & $5.8395\cdot10^{-2}$ & $5.8396\cdot10^{-2}$ & $5.8397\cdot10^{-2}$\\
$f_8^m$ & $6.0836\cdot10^{-2}$ & $4.8835\cdot10^{-2}$ & $4.8835\cdot10^{-2}$ & $4.8843\cdot10^{-2}$ & $4.8845\cdot10^{-2}$ & $4.8846\cdot10^{-2}$\\
$f_16^m$ & $6.8469\cdot10^{-2}$ & $3.1206\cdot10^{-2}$ & $3.1207\cdot10^{-2}$ & $3.1226\cdot10^{-2}$ & $3.1230\cdot10^{-2}$ & $3.1233\cdot10^{-2}$\\
$f_32^m$ & $7.0581\cdot10^{-2}$ & $2.6102\cdot10^{-2}$ & $2.6102\cdot10^{-2}$ & $2.6129\cdot10^{-2}$ & $2.6135\cdot10^{-2}$ & $2.6139\cdot10^{-2}$\\
\bottomrule

\end{tabular}
\end{table}
\FloatBarrier

The spectral-cutoff study in the main text gives
\[
    \|f_{\mathrm{ref}}(40)-f_{\mathrm{ref}}(60)\|_{L^2(Q_\omega)}
    =2.11\cdot10^{-3},
\]
which is approximately thirteen times smaller than $\tau_{32}^{\mathrm{ref}}=2.7045\cdot10^{-2}$. Correspondingly, measuring the truncation error against $f_{\mathrm{ref}}(60)$ instead of $f_{\mathrm{ref}}(40)$ changes it only to $2.7127\cdot10^{-2}$, by about $0.3\%$. Thus the reported finite-model truncation error changes little under the tested cutoff enlargement from $40$ to $60$. This measured sensitivity is, however, neither an upper nor a lower bound for $\|f_{\mathrm{ref}}(40)-f^*\|_{L^2(Q_\omega)}$.
\FloatBarrier

\section*{Data availability}
The code, the stored results of all runs, and the scripts that regenerate the tables and figures of this paper are openly available on Zenodo at \url{https://doi.org/10.5281/zenodo.23288937}.

\section*{Declaration on the use of AI tools}
AI tools were used for language editing, code review, and manuscript checking. The code was developed with the assistance of Claude (Anthropic) and ChatGPT (OpenAI). The author assumes full responsibility for all content.

\bibliographystyle{plain}
\bibliography{References}
\end{document}